\newtheorem{theorem}{Theorem}[section]
\newtheorem{lemma}[theorem]{Lemma}
\newtheorem{proposition}[theorem]{Proposition}
\newtheorem{corollary}[theorem]{Corollary}
\newtheorem{assumption}{Assumption}
\newtheorem{remark}[theorem]{Remark}
\newtheorem{example}[theorem]{Example}
\numberwithin{equation}{section}
\def\NN{\hbox{I\kern-.2em\hbox{N}}}
\def\RR{\hbox{I\kern-.2em\hbox{R}}}
\date{}
\begin{document}

\title[Paper LB5]{On convergence of solutions to difference equations with additive perturbations}
\author[E. Braverman, and A. Rodkina]
{E. Braverman, and A. Rodkina}
\address{Department of Mathematics, University of Calgary, Calgary, Alberta T2N1N4, Canada}
\email{maelena@math.ucalgary.ca}
\address{Department of Mathematics,
The University of the West Indies, Mona Campus, Kingston, Jamaica}
\email{alexandra.rodkina@uwimona.edu.jm}

\begin{abstract}
Various types of stabilizing controls lead to a deterministic difference equation with the following property: once the initial 
value is positive, the solution tends to the unique positive equilibrium. 
Introducing additive  perturbations can change this picture: we give examples of difference equations experiencing additive 
perturbations which have solutions staying around zero rather than tending to the unique positive equilibrium. 
When perturbations are stochastic with a bounded support, we give an upper estimate for 
the probability that the solution  can stay around zero.  Applying extra conditions on the behavior of the map function $f$ at zero 
or on the amplitudes of stochastic perturbations,  we prove that the solution tends to the unique positive equilibrium almost surely. 
In particular, this holds either for all amplitudes when the right derivative of the map $f$ at zero exceeds one 
or, independently of the behavior of $f$ at zero, when the amplitudes are not square summable.

{\bf AMS Subject Classification:} 39A50, 37H10, 34F05, 39A30, 93D15, 93C55

{\bf Keywords:} difference equations; global attractivity; stochastic perturbations, Central Limit Theorem
\end{abstract}

\maketitle

\section{Introduction}
\label{sec:Intr}

Consider the deterministic equation
\begin{equation}
\label{eq:ggammaintr}
x_{n+1}= \max\left\{ f(x_n)+\gamma_{n+1}, 0 \right\}, \quad x_0>0, \quad n=0,1, \dots,
\end{equation}
as well as the  stochastic difference equation with additive perturbations
\begin{equation}
\label{eq:stochintr}
x_{n+1}= \max\left\{ f(x_n)+\sigma_n\xi_{n+1}, 0 \right\}, \quad x_0>0, \quad  n=0,1, \dots.
\end{equation}
Here $f:\RR^+\to \RR^+$ is continuous, $f(0)=0$, and $f(x)>0$ for all $x>0$; $f$ has only 
two fixed points $x=0$ and $x=K>0$; $f(x)>x$ for all $0<x<K$ and $f(x)<x$ for all $x>K$. 
Deterministic perturbations $\gamma_n\in (-\infty, \infty)$ satisfy $\gamma_n\to 0$ as $n\to 0$, the noises $\xi_n$ are continuously 
distributed,  independent and bounded random variables with the joint support $[-1, 1]$, coefficients $\sigma_n>0$.

Difference equations $x_{n+1}=  f(x_n)$ have been an object of intensive attention since 1970ies,
their behavior can be unstable or even chaotic. Various methods have been introduced to 
stabilize difference equations, see the recent publications
\cite{BF15,BL,BRJDEA13,IMT13,LP14,Medina12,Shai} and references therein. 
In many cases a unique positive equilibrium is stabilized \cite{BF15,BL,IMT13}, it is also
common to stabilize a cycle \cite{BL,BRJDEA13,IMT13,LP14}. Difference equations which include memory (delay), higher order equations, or some other generalizations are stabilized in \cite{BF15,IMT13,Medina12}.  These are mostly deterministic equations, stochastic difference equations are considered in \cite{ ABR, AMR, AR1,  KR09, Shai}.

In the present paper, we consider stochastically perturbed deterministic equations. 
As illustrated in \cite{Brav2011},  introduction of stochastic perturbations in a population
dynamics model with the Allee effect can either eliminate the Allee zone, or bring all solutions to extinction,
in contrast to the behavior of the deterministic equation. In \cite{Brav2011} a random perturbation was assumed to take two
possible values, one positive and one negative. 
Here we suppose that the amplitudes $\sigma_n$ of random perturbations eventually  vanish. We explore whether such perturbations can 
change the eventual behavior of a stable difference equation.
Simple examples illustrate that deterministic perturbations decaying to zero can make the unstable zero equilibrium an attractor 
for solutions with small enough initial value. The purpose of the present paper is to explore conditions
under which stochastic perturbations with amplitudes tending to zero will not change stability of the positive equilibrium.
In the cases when preservation of stability cannot be claimed, we estimate its probability. 

Consider the difference equation
\begin{equation}
\label{eq:unpert}
x_{n+1}= f(x_n),  \quad x_0>0, \quad  n=0,1, \dots
\end{equation}
which is globally asymptotically stable in the positive domain.

Stochastic or even deterministic perturbations which tend to zero as $n \to \infty$ cannot make a point an attractor, if it is not an equilibrium  of the unperturbed equation. However, they can turn an unstable equilibrium into an attractor of some trajectories. 

We start with analyzing equations with deterministic perturbations: if instead of \eqref{eq:unpert} we consider perturbed equation 
\eqref{eq:ggammaintr} with $\gamma_n\to 0$, asymptotics of solutions can be different.
Theorem \ref{thm:detmain} states that the  solution $x_n$  of  deterministic equation 
\eqref{eq:ggammaintr} either tends  to $K$ or to zero. 
To illustrate the latter possibility,  we present examples of equations satisfying the assumptions of 
Theorem \ref{thm:detmain} but with solutions tending to zero.   
We  also discuss the connection between the function $f$  and  the perturbations $\gamma$ which guarantee that convergence to zero is 
impossible.

Further, we consider the asymptotics of the solution $x_n$ of stochastic difference equation \eqref{eq:stochintr} when we assume 
that, almost surely (a.s.),
$\displaystyle  \lim_{n\to \infty}\sigma_n\xi_{n+1}=0$.

Our main goal for the stochastic case is to prove that the probability of the eventual extinction 
\begin{equation}
\label{cond:intrprob0}
\mathbb P\{\omega\in \Omega: \lim_{n\to \infty} x_n(\omega)=0\}=0.
\end{equation}
Once this fact is verified, we can apply the results for a deterministic equation showing  that in this situation the only possibility 
left is $\displaystyle \lim_{n\to \infty}x_n=K$.
 
When we deal with stochastic equations we distinguish between the cases $\sigma\notin {\bf \ell}_2$ and $\sigma\in {\bf \ell}_2$.   
For  
$\sigma\notin {\bf \ell}_2$ we do not impose any extra assumptions on the behavior of $f$ at $x\to 0^+$.  Our main tools in this 
situation are  the Central Limit Theorem applied for  the sequence of uniformly bounded random variables  
(see  \cite[p. 328--333]{Shiryaev96}) and the result from \cite{AR1} which states that a. s.
\[
\limsup_{n\to \infty}\sum_{i=1}^n \sigma_i\xi_{i+1}=\infty, \quad 
\liminf_{n\to \infty}\sum_{i=1}^n \sigma_i\xi_{i+1}=\infty
\]
(see Section \ref{subsec:CLT} of Appendix and Lemma \ref{lem:+infty}). Armed with these results, we 
prove that \eqref{cond:intrprob0} holds.

In the case $\sigma\in {\bf \ell}_2$ we are able to prove \eqref {cond:intrprob0} when imposing some restriction on $f(x)$ as $x\to 
0^+$. In particular, we prove \eqref{cond:intrprob0} when
\begin{equation}
\label{cond:f0+}
\liminf_{x\to 0^+}~ \frac {f(x)}{x}>1,
\end{equation}
which is quite a common condition in population dynamics models. We also generalize condition  \eqref{cond:f0+}
assuming some connection between $\sigma$ and $f$ instead.
Without imposing any extra restrictions on $f$ as $x\to 0^+$ or  connection between $\sigma$ and $f$,  
for a symmetric distribution of $\xi$, we prove that 
$$
\mathbb P\{\omega\in \Omega: \lim_{n\to \infty} x_n(\omega)=0\}\le 1/2.
$$
Note that when $\sigma\in {\bf \ell}_2$,  we do not need any assumption about the expectations  $m_i=\mathbb E\xi_i$, while for  
$\sigma\notin {\bf \ell}_2$ we prove \eqref{cond:intrprob0} when  $m_i$ are either nonnegative or negative $m_i$ are quickly decaying 
in the sense that  $m^-=(m^-_i)_{i\in \mathbb N}\in {\bf \ell}_2$, where $a^-= \min\{ a,0\}$.
 
The paper is organized as follows.
Section  \ref{sec:not} outlines notations and assumptions which will be used later.
In Section  \ref{sec:det} we show that when instead of \eqref{eq:unpert} we consider perturbed equation 
\eqref{eq:ggammaintr} with $\gamma_n\to 0$, asymptotics of solutions can be different. 
Theorem \ref{thm:detmain} proves that the  solution $x_n$  of  deterministic equation 
\eqref{eq:ggammaintr} either tends  to $K$ or to zero.  To illustrate the latter case  in Section \ref{subsec:detnottozero}  we 
present examples of equations satisfying the assumptions of Theorem \ref{thm:detmain} but with solutions tending to zero.   
In Section \ref{subsec:detnottozero} we also  derive conditions on 
$f$ and perturbation $\gamma$ which guarantee that convergence to zero for the  solution $x_n$  of perturbed deterministic 
equation \eqref{eq:ggammaintr} is impossible. In Proposition \ref{prop:invalpar} we also show that all solutions converge to the 
positive
equilibrium when the perturbation $\gamma$ is small and the initial value $x_0>\varepsilon$ for some $\varepsilon>0$.
Section \ref{sec:prel} involves auxiliary results for stochastic sequences. In Section  \ref{sec:stoch} we prove results on 
convergence of the solution $x_n$ of stochastic difference equation \eqref{eq:stochintr}
to either originally stable positive equilibrium $K$ or to zero. The purpose is to find conditions under which the latter case has the zero probability.
All solutions converge to $K$, a. s., if the perturbations amplitudes are not in ${\bf \ell}_2$. 
Our  results are illustrated with computer simulations in Section \ref{sec:sim}.  
Several   proofs are deferred to Appendix.

\section{Notations and main assumptions}
\label{sec:not}

Denote
$\mathbb N=\{1, 2, \dots\}$, ${\mathbb R}=(-\infty, \infty)$, $\RR^+=[0,\infty)$,
and $\displaystyle
a^+:=\max\{a, 0 \}$,  $\displaystyle a^-:=\min\{a, 0 \}$, for each $a\in \mathbb R$.
As usual, we use the symbol ${\bf \ell}_2$ for the space of real sequences $a=(a_n)_{n\in \mathbf  N}$: $a\in {\bf 
\ell}_2$ satisfying ~$\displaystyle \sum_{i=0}^\infty a_i^2<\infty$.

\begin{assumption} 
\label{as:g12}
Assume that the function $f$ satisfies the following conditions:
\begin{enumerate}
\item[(A1)] $f:\RR^+\to \RR^+$ is continuous, $f(0)=0$, and $f(x)>0$ for all $x>0$;
\item[(A2)] $f$ has only two fixed points $x=0$ and $x=K>0$;\\
 $f(x)>x$ for all $0<x<K$ and $f(x)<x$ for all $x>K$.
\end{enumerate}
\end{assumption}

For $f$ satisfying Assumption \ref{as:g12} we denote
\begin{equation}
\label{def:F}
F(x):=f(x)-x, \quad x>0
\end{equation}
and remark that $F$ is positive on $(0, K)$ and negative on $(K, \infty)$.

\begin{assumption} 
\label{as:g34}
There exists $\lambda \in (0,1)$ such that for any $x>0$ either
\begin{equation}
\label{cond:g3}
|f(x)-K|\le \lambda |x-K|
\end{equation}
or
\begin{equation}
\label{cond:g4}
(f(x)-K)(x-K)>0.
\end{equation}
\end{assumption}

\begin{remark}
\label{rem:nonexpansion}
Note that Assumptions \ref{as:g12}-\ref{as:g34}  imply that, for $x>0$ and $x \neq K$,
\[
|f(x)-K| < |x-K|.
\]
Indeed, if $x\in (0, K)$ and $f(x)\in (0, K)$, then
\[
|f(x)-K|=K-f(x) <  K-x=|x-K|.
\]
If $x > K$ and $f(x) > K$, then
\[
|f(x)-K| = f(x)-K < x-K = |x-K|,
\]
while for $(f(x)-K)(x-K)<0$, 
\[
|f(x)-K|\le \lambda |K-x|<|K-x|.
\]
\end{remark}

\begin{remark}
\label{rem:lambda}
Note that inequality \eqref{cond:g3} implies that, for $x\ge K$
\[
-\lambda x+K(1+\lambda)\le  f(x)\le \lambda x+K(1-\lambda),
\]
while for $x \le K$
\[
\lambda x+K(1-\lambda)\le  f(x) \le -\lambda x+K(1+\lambda).
\]
\end{remark}
We also will be using the following additional assumptions on $f$ and perturbations $\gamma$. 

\begin{assumption} 
\label{as:fxinf} 
Eventually the difference $x-f(x)$ exceeds a positive constant:
\begin{equation}
\label{cond:gamma2}
\liminf_{x \to \infty} (x-f(x))>0.
\end{equation}
\end{assumption} 

\begin{assumption} 
\label{as:gammasum1}
The perturbation sequence tends to zero:
\begin{equation}
\label{cond:gamma1}
\lim_{i \to \infty} \gamma_i = 0.
\end{equation}
\end{assumption}

Let  $(\Omega, {\mathcal{F}},  (\mathcal{F}_n)_{n \in \mathbb{N}}, {\mathbb{P}})$ be  a complete filtered probability space.  Let $\xi:=(\xi_n)_{n\in\mathbb{N}}$  be a sequence of independent random variables.
The filtration $(\mathcal{F}_n)_{n \in \mathbb{N}}$ is supposed to be naturally generated by 
the sequence $(\xi_n)_{n\in\mathbb{N}}$, namely 
$\mathcal{F}_{n} = \sigma \left\{\xi_{1},  \dots, \xi_{n}\right\}$.

In the present paper we assume that the stochastic perturbation $\xi$ in equation \eqref{eq:stochintr} satisfies the following assumption.

\begin{assumption}
\label{as:chibound}
$\xi=(\xi_n)_{n\in \mathbb N}$ is a sequence of independent   continuous random variables with the density functions  $\phi_n(x)$  such that 
\[
\phi_n(x)>0, \quad x\in (-1, 1), \quad \phi_n(x)\equiv 0, \quad x\notin [-1, 1],  \quad n\in \mathbb N.
\]
\end{assumption}

We use the standard abbreviation ``a.s." for the wordings ``almost sure" or ``almost surely" with respect
to the fixed probability measure $\mathbb P$  throughout the text. A detailed discussion of stochastic concepts and
notation can be found in, for example, 
\cite{Shiryaev96}.

For further calculations it is convenient to note that, for any $x_n$, $x_{n+1}$, satisfying either equation \eqref{eq:ggammaintr} or \eqref{eq:stochintr}, the inequalities 
\begin{equation}
\label{ineq:det}
x_{n+1}\ge f(x_n)+\gamma_{n+1}, \quad  n=0,1, \dots,
\end{equation}
\begin{equation}
\label{ineq:stoch}
x_{n+1}\ge f(x_n)+\sigma_n\xi_{n+1}, \quad n=0,1, \dots,
\end{equation}
are valid, respectively.


\section{Deterministically perturbed difference equation}
\label{sec:det}

Consider the deterministically perturbed difference equation 
\begin{equation}
\label{eq:ggamma}
x_{n+1}= \max\left\{ f(x_n)+\gamma_{n+1}, 0 \right\}, \quad n=0,1, \dots,
\end{equation}
where for $f$ Assumption~\ref{as:g12} holds, and $\gamma_n$ is a deterministic  perturbation, satisfying \eqref{cond:gamma1}.  


\subsection{Auxiliary lemmata and main deterministic theorem}
\label{subsec:maindet}

Let us prove that under \eqref{cond:gamma1}, for any small enough $\varepsilon_0$, 
there exists $n_1 \in \NN$ such that $x_n \in [\varepsilon_0,2K-\varepsilon_0]$ for some $n>n_1$ implies $x_j \in [\varepsilon_0,2K-\varepsilon_0]$
for any $j \geq n$.

\begin{lemma}
\label{lem:add1}
Suppose that Assumptions~\ref{as:g12}, \ref{as:g34} and \ref{as:gammasum1} hold. Let 
$x$ be a solution of \eqref{eq:ggamma} with an arbitrary initial value $x_0>0$.
Then for any $\varepsilon_0>0$ satisfying
\begin{equation} 
\label{add1}
\varepsilon_0 < \min\left\{ \lambda K, \,\,  \frac{(1-\lambda)K}{2} \right\}
\end{equation}
there exists $n_1 \in \NN$ such that 
if $x_n \in [\varepsilon_0, 2K-\varepsilon_0]$ for some $n\geq n_1$, 
then  $x_{n+k} \in [\varepsilon_0, 2K-\varepsilon_0]$ for any $k \in \NN$.
\end{lemma}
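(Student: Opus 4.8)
The plan is to choose $\varepsilon_0$ as in \eqref{add1}, use Assumption~\ref{as:gammasum1} to pick $n_1$ so that $|\gamma_i| < \delta$ for all $i > n_1$ where $\delta$ is a suitably small quantity (roughly $\delta = \min\{\varepsilon_0(1-\lambda)/2, (1-\lambda)(K-\varepsilon_0)\}$ or similar), and then prove by induction on $k$ that $x_{n+k} \in [\varepsilon_0, 2K-\varepsilon_0]$. The base case $k=0$ is the hypothesis. For the inductive step, assuming $x_m \in [\varepsilon_0, 2K-\varepsilon_0]$ with $m = n+k \geq n_1$, I must show $x_{m+1} = \max\{f(x_m)+\gamma_{m+1},0\} \in [\varepsilon_0, 2K-\varepsilon_0]$.

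The key is the non-expansiveness recorded in Remark~\ref{rem:nonexpansion}: since $x_m \in [\varepsilon_0, 2K-\varepsilon_0]$ and $x_m \neq K$ (or if $x_m = K$ then $f(x_m)=K$ and the estimates are trivial), we have $|f(x_m)-K| < |x_m - K| \leq K - \varepsilon_0$, so $f(x_m) \in (\varepsilon_0, 2K-\varepsilon_0)$. This shows the \emph{unperturbed} image already lies strictly inside the interval. The remaining work is to control the perturbation $\gamma_{m+1}$: for the upper bound, $f(x_m)+\gamma_{m+1} < (2K-\varepsilon_0) + \delta$, and one needs a slightly more careful estimate than the crude one — using Remark~\ref{rem:lambda}, on the part of the interval near $2K-\varepsilon_0$ (i.e. $x_m \geq K$) one has $f(x_m) \leq \lambda x_m + K(1-\lambda) \leq \lambda(2K-\varepsilon_0)+K(1-\lambda) = 2K - \lambda\varepsilon_0$, leaving a gap of $\lambda\varepsilon_0 \geq \delta$ to absorb the positive part of $\gamma_{m+1}$; near $0$ (i.e. $x_m \leq K$) one has $f(x_m) \geq \lambda x_m + K(1-\lambda) \geq \lambda\varepsilon_0 + K(1-\lambda)$, which exceeds $\varepsilon_0$ by the choice \eqref{add1}, leaving room to absorb the negative part of $\gamma_{m+1}$; and the $\max\{\cdot,0\}$ can only help the lower bound, never hurt it. A symmetric pair of estimates with the two inequalities of Remark~\ref{rem:lambda} swapped handles the other halves of the interval, or more simply one checks that on the regime where Remark~\ref{rem:lambda} is used one already gets a clean margin, while on the complementary regime ($x_m$ on the ``contracting toward $K$'' side) the value $f(x_m)$ is pushed toward $K$ and hence even farther from both endpoints.

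The main obstacle I anticipate is bookkeeping the two alternatives in Assumption~\ref{as:g34} cleanly: for a given $x_m$ we only know that \emph{either} \eqref{cond:g3} \emph{or} \eqref{cond:g4} holds, and the endpoint-gap estimate via Remark~\ref{rem:lambda} is only available under \eqref{cond:g3}. Under \eqref{cond:g4}, however, $f(x_m)$ lies on the same side of $K$ as $x_m$ and still satisfies $|f(x_m)-K|<|x_m-K|$ by Remark~\ref{rem:nonexpansion}, so $f(x_m)$ is squeezed strictly between $x_m$ and $K$ — hence even safely inside $(\varepsilon_0, 2K-\varepsilon_0)$ with room to spare, and in fact the distance of $f(x_m)$ to the \emph{nearer} endpoint is at least the distance of $x_m$ to that endpoint, which is $\geq \varepsilon_0(1-\lambda)/2$ is not needed here since it is simply $\geq 0$ plus we use $f(x_m)$ strictly interior. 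The cleanest route is: split into $x_m \leq K$ and $x_m \geq K$; in each case split into the two sub-alternatives of Assumption~\ref{as:g34}; verify in all four cases that $f(x_m)$ stays in a closed sub-interval $[\varepsilon_0 + \eta, 2K - \varepsilon_0 - \eta]$ for some fixed $\eta>0$ depending only on $\lambda, K, \varepsilon_0$; then fix $n_1$ with $|\gamma_i|<\eta$ for $i>n_1$ and conclude. The constant \eqref{add1} is precisely what makes $\eta>0$ possible, so once the case analysis is set up the computation is short and forced.
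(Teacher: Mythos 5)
Your overall strategy (fix a uniform margin $\eta>0$, choose $n_1$ so that $|\gamma_i|<\eta$ beyond it, then induct with a case analysis on the position of $x_m$ and on the two alternatives of Assumption~\ref{as:g34}) is the same as the paper's, but there is a genuine gap in the cases where only \eqref{cond:g4} is available near the endpoints. Your justification there is that $f(x_m)$ is squeezed strictly between $x_m$ and $K$, hence ``strictly interior with room to spare,'' and that its distance to the nearer endpoint is at least that of $x_m$; but that latter distance can be $0$ or arbitrarily small (e.g.\ $x_m=\varepsilon_0$, or $x_m$ close to $2K-\varepsilon_0$), so pointwise strict interiority yields no uniform margin. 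Moreover, no $\eta$ depending only on $\lambda,K,\varepsilon_0$ exists: one can make $f(x)-x$ as small as one likes on $[\varepsilon_0,2\varepsilon_0]$ while all hypotheses hold, so the margin must depend on $f$. The missing ingredient is the compactness/continuity step the paper builds into \eqref{add2}: choose $\delta_0$ smaller than $\min_{x\in[\varepsilon_0,2\varepsilon_0]}(f(x)-x)$ and $\min_{x\in[K+\varepsilon_0,2K-\varepsilon_0]}(x-f(x))$, both positive since $f$ is continuous and $f(x)\neq x$ on these compact sets; these quantities are exactly what control the lower endpoint for $x_m\in[\varepsilon_0,2\varepsilon_0]$ and the upper endpoint for $x_m\in[K+\varepsilon_0,2K-\varepsilon_0]$ when only \eqref{cond:g4} is known, while Remark~\ref{rem:lambda} (i.e.\ the \eqref{cond:g3} alternative) together with the size restriction \eqref{add1} handles the remaining regions, as in the paper's proof.

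A smaller but real slip: in your $x_m\ge K$ estimate under \eqref{cond:g3} you compute $\lambda(2K-\varepsilon_0)+K(1-\lambda)=2K-\lambda\varepsilon_0$; the correct value is $K(1+\lambda)-\lambda\varepsilon_0$, and the comparison must be made with $2K-\varepsilon_0$, not $2K$. The bound you state, $2K-\lambda\varepsilon_0$, actually exceeds $2K-\varepsilon_0$, so the claimed ``gap of $\lambda\varepsilon_0$'' does not close that step; with the corrected arithmetic the margin is $(1-\lambda)(K-\varepsilon_0)>0$, so this part is fixable — unlike the \eqref{cond:g4} endpoint issue, which requires the additional compactness argument described above.
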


\begin{proof}
Let $\varepsilon_0>0$ satisfy \eqref{add1}. Define $\delta_0>0$ and $n_1=n_1(\delta_0)$ such that
\begin{equation}
\label{add2}
\delta_0< \min\left\{ \min_{x \in [\varepsilon_0, 2\varepsilon_0]} (f(x)-x), \quad \lambda \varepsilon_0, \quad 
\min_{x \in [K+\varepsilon_0,2K-\varepsilon_0]} (x-f(x)) \right\},
\end{equation}
\[
|\gamma_n|\le \delta_0, \quad \text{for} \quad n\ge n_1.
\]
Then for $x \in [\varepsilon_0,2\varepsilon_0]$, $n \ge n_1$,  we have 
\[
f(x)+\gamma_n>f(x)-x+x-\delta_0>\delta_0+x-\delta_0 = x \geq \varepsilon_0,
\]
while  for $x \in [2\varepsilon_0, K]$, $n \ge n_1$,  we have 
\[
f(x)+\gamma_n>2\varepsilon_0-\delta_0>\varepsilon_0.
\]
Also, for  $x \in [\varepsilon_0, K]$, $n \ge n_1$,  we have, by \eqref{add1}, \eqref{add2} and Remark~\ref{rem:lambda}, 
\[
f(x)+\gamma_n<-\lambda x+K(1+\lambda)+\delta_0<-\lambda \varepsilon_0+K(1+\lambda)+\delta_0<
K(1+\lambda)<2K-\varepsilon_0.
\]
If $x\in [K, K+\varepsilon_0]$, by $\varepsilon_0 < \lambda K$,
\[
f(x)+\gamma_n<K+\varepsilon_0+\delta_0<K+2\varepsilon_0<K+(1-\lambda)K=2K-\lambda K<2K-\varepsilon_0.
\]
For $x\in [K+\varepsilon_0, 2K-\varepsilon_0]$, we have
\[
f(x)+\gamma_n<f(x)-x+x+\delta_0<-\delta_0+x+\delta_0 = x <2K-\varepsilon_0,
\]
while for $x\in [K, 2K-\varepsilon_0]$, by Remark~\ref{rem:lambda},
\[
f(x)+\gamma_n>-\lambda x+K(1+\lambda)-\delta_0>-2\lambda K+\lambda\varepsilon_0+K(1+\lambda)-\delta_0>K(1-\lambda)>\varepsilon_0.
\]
Thus, $f(x_n) + \gamma_{n+1} \in [\varepsilon_0, 2K-\varepsilon_0]$ as long as
$x_n \in [\varepsilon_0, 2K-\varepsilon_0]$ and $n \ge n_1$, which concludes the proof.
\end{proof}

\begin{lemma}
\label{lem:add2}
Suppose that Assumptions~\ref{as:g12}, \ref{as:g34}, \ref{as:fxinf} and \ref{as:gammasum1} hold, and $\varepsilon_0$ satisfies \eqref{add1}.  
For any solution $x$ of \eqref{eq:ggamma} with a positive  initial value
there exists $n_2  \in {\mathbb N}$ such that $x_n \le 2K-\varepsilon_0$ for $n \geq n_2$.
\end{lemma}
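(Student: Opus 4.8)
The plan is to show that the assumption $\liminf_{x\to\infty}(x-f(x))>0$ forces an eventual uniform downward drift whenever the solution sits above $2K-\varepsilon_0$, so that the solution cannot remain in $(2K-\varepsilon_0,\infty)$ forever and, once it drops below the threshold, Lemma \ref{lem:add1} keeps it there. First I would fix constants: by Assumption \ref{as:fxinf} there exist $c>0$ and $M>0$ (which I may take $\ge 2K$) with $x-f(x)\ge c$ for all $x\ge M$, and by Remark \ref{rem:lambda} plus Assumption \ref{as:g34}, for $x\ge K$ we have $f(x)\le \lambda x+K(1-\lambda)$, which gives $x-f(x)\ge (1-\lambda)(x-K)\ge (1-\lambda)\varepsilon_0 =: c_1>0$ on the compact-at-the-left interval $[K+\varepsilon_0,M]$; hence on all of $[2K-\varepsilon_0,\infty)$ (using $2K-\varepsilon_0\ge K+\varepsilon_0$, valid since $\varepsilon_0<(1-\lambda)K/2\le K/2$) we get $x-f(x)\ge \min\{c,c_1\}=:c_0>0$. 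Then by \eqref{cond:gamma1} pick $n_2'$ with $|\gamma_n|\le c_0/2$ for $n\ge n_2'$, so that whenever $x_n\ge 2K-\varepsilon_0$ and $n\ge n_2'$,
\[
x_{n+1}=\max\{f(x_n)+\gamma_{n+1},0\}\le f(x_n)+|\gamma_{n+1}|\le x_n - c_0 + c_0/2 = x_n - c_0/2.
\]

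The second step is the drift argument proper. Suppose, for contradiction, that $x_n>2K-\varepsilon_0$ for every $n\ge n_2'$. Then the displayed inequality applies at every such $n$, giving $x_{n+k}\le x_{n_2'} - k\,c_0/2\to-\infty$ as $k\to\infty$, contradicting $x_{n+k}\ge 0$. Hence there is some $N\ge n_2'$ with $x_N\le 2K-\varepsilon_0$. I would then set $n_2:=\max\{N,n_1\}$, where $n_1$ is the index furnished by Lemma \ref{lem:add1} for this $\varepsilon_0$ (legitimate since Assumptions \ref{as:g12}, \ref{as:g34}, \ref{as:gammasum1} all hold and $\varepsilon_0$ satisfies \eqref{add1}), and re-run the one-step estimate at the index $N$: either $x_N\ge\varepsilon_0$, in which case $x_N\in[\varepsilon_0,2K-\varepsilon_0]$ and Lemma \ref{lem:add1} gives $x_{N+k}\in[\varepsilon_0,2K-\varepsilon_0]$ for all $k$, so in particular $x_n\le 2K-\varepsilon_0$ for $n\ge n_2$; or $x_N<\varepsilon_0$, in which case a short separate check shows the solution stays $\le 2K-\varepsilon_0$ until it first re-enters $[\varepsilon_0,2K-\varepsilon_0]$, after which Lemma \ref{lem:add1} takes over.

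The main obstacle is this last case, $x_N<\varepsilon_0$: Lemma \ref{lem:add1} says nothing when the solution is below $\varepsilon_0$, so I need an independent bound on the upward jumps from the region $(0,\varepsilon_0)$. This is handled by choosing $n_2'$ slightly more carefully: on $[0,\varepsilon_0]$ the map satisfies $f(x)\le -\lambda x + K(1+\lambda)$ by Remark \ref{rem:lambda}, hence $f(x)\le K(1+\lambda)<2K-\varepsilon_0$ (again using $\varepsilon_0<\lambda K$), so requiring additionally $|\gamma_n|\le \delta_0$ for $n\ge n_2'$ with $\delta_0< \lambda K -\varepsilon_0$ guarantees $x_{n+1}\le K(1+\lambda)+\delta_0<2K-\varepsilon_0$ whenever $x_n\in[0,\varepsilon_0]$. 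Thus from any $x_N<\varepsilon_0$ the solution stays in $[0,2K-\varepsilon_0]$ as long as it remains below $\varepsilon_0$, and the first time it exceeds $\varepsilon_0$ it still lies in $[\varepsilon_0,2K-\varepsilon_0]$, putting us back in the hypothesis of Lemma \ref{lem:add1}. Taking $n_2$ to be the maximum of $n_1$, $N$, and these refined threshold indices completes the argument; the bookkeeping of which of the three "$|\gamma_n|$ small" conditions to impose is the only delicate point, and all of them are simultaneously achievable by \eqref{cond:gamma1} for $n$ large enough.
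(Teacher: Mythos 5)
Your overall strategy is the same as the paper's: a uniform downward drift above $2K-\varepsilon_0$ (Assumption \ref{as:fxinf} for large $x$ plus a bound on the bounded part) forces the solution to drop below $2K-\varepsilon_0$ once $|\gamma_n|$ is small, and afterwards one argues by cases --- $x_N\in[\varepsilon_0,2K-\varepsilon_0]$ handled by Lemma \ref{lem:add1}, and $x_N<\varepsilon_0$ handled by the one-step bound $f(x)+\gamma\le -\lambda x+K(1+\lambda)+\delta_0<2K-\varepsilon_0$ --- which is exactly the paper's proof. However, two of your quantitative steps fail as written. First, you obtain the drift constant $c_1$ on $[K+\varepsilon_0,M]$ from ``$f(x)\le \lambda x+K(1-\lambda)$ for $x\ge K$, by Remark \ref{rem:lambda} plus Assumption \ref{as:g34}''. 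Remark \ref{rem:lambda} applies only at points where alternative \eqref{cond:g3} holds; Assumption \ref{as:g34} permits that only \eqref{cond:g4} holds there, in which case you merely know $K<f(x)<x$, and $x-f(x)$ can be much smaller than $(1-\lambda)(x-K)$ (for instance $f(x)=x-(x-K)^3/L$ near $x=K+\varepsilon_0$ with $L$ large is compatible with all the assumptions), so your claimed pointwise inequality is false in general. The correct and standard justification of a positive lower bound for $x-f(x)$ on $[2K-\varepsilon_0,M]$ is compactness together with continuity of $f$ and $f(x)<x$ for $x>K$ (Assumption \ref{as:g12}); combined with Assumption \ref{as:fxinf} for $x\ge M$ this is precisely the paper's $\sigma=\inf_{x>2K-\varepsilon_0}(x-f(x))>0$, after which your descent-by-contradiction step is fine.

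Second, in the case $x_N<\varepsilon_0$ you require only $\delta_0<\lambda K-\varepsilon_0$ and claim this guarantees $K(1+\lambda)+\delta_0<2K-\varepsilon_0$. That inequality needs $\delta_0<(1-\lambda)K-\varepsilon_0$, which your condition ensures only when $\lambda\le 1/2$; for $\lambda>1/2$ a $\delta_0$ admissible for you can violate it. The relevant half of \eqref{add1} here is $\varepsilon_0<(1-\lambda)K/2$, not $\varepsilon_0<\lambda K$ (the paper takes $\delta_0<\lambda\varepsilon_0\le\varepsilon_0$ and uses $K(1+\lambda)+\varepsilon_0<2K-\varepsilon_0$). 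Both slips are local and easily repaired --- shrink $\delta_0$ appropriately and replace the Remark \ref{rem:lambda} argument by the compactness argument --- after which your proof coincides with the paper's. (Your other use of Remark \ref{rem:lambda}, the upper bound $f(x)\le-\lambda x+K(1+\lambda)$ for $x\le K$, is harmless in both alternatives, since \eqref{cond:g4} with $x<K$ gives $f(x)<K\le-\lambda x+K(1+\lambda)$.)
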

\begin{proof}
By condition \eqref{cond:gamma2} of Assumption~\ref{as:fxinf}, 
\begin{equation}
\label{eq:ast2}
\sigma:=\inf_{x>2K-\varepsilon_0} (x-f(x))>0. 
\end{equation}
Let us choose $\delta_0$ as in \eqref{add2} and $n_1 \in {\mathbb N}$ such that
$$ |\gamma_n| < \min\left\{ \frac{\sigma}{2}, \delta_0 \right\}, \quad n \geq n_1. $$
Then, as long as $x>2K-\varepsilon_0$, we have
$$
f(x)+ \gamma_{n+1}=f(x)-x+x+ \gamma_{n+1}\le -\sigma+x+\frac{\sigma}{2} < x - \frac{\sigma}{2},
$$
which implies the existence of $n_2 \geq n_1$ such that $x_{n_2} \leq 2K-\varepsilon_0$.

Let us prove that $x_n\leq 2K-\varepsilon_0$ for any $n \geq n_2$.  Since all the assumptions of Lemma~\ref{lem:add1} 
hold  and $n_2 \geq n_1$, the relation  $x_{n_2} \in [\varepsilon_0,2K-\varepsilon_0]$ implies 
$x_n \in [\varepsilon_0,2K-\varepsilon_0]$ for all  $n \geq n_2$.  For $x_{n_2} < \varepsilon_0$ we have, by 
Remark~\ref{rem:lambda} and \eqref{add1},
\[
x_{n_2+1}=f(x_{n_2}) +\gamma_{n_2+1}\le -\lambda x_{n_2} + K(1+\lambda)+\delta_0 \le K(1+\lambda) +\varepsilon_0< 2K-\varepsilon_0,
\]
Thus, in all cases, $x_n < 2K-\varepsilon_0$ for $n \geq n_2$
implies $x_j < 2K-\varepsilon_0$ for any $j \geq n$, which concludes the 
proof. 
\end{proof}

\begin{lemma}
\label{lem:add3}
Suppose that Assumptions~\ref{as:g12}, \ref{as:g34}, \ref{as:fxinf} and \ref{as:gammasum1} hold.
Let $x$ be a solution of \eqref{eq:ggamma} with an arbitrary initial value $x_0 > 0$,
and $\sigma_1>0$ be such that $\limsup\limits_{n \to \infty} x_n \geq \sigma_1$. Then there is an $\varepsilon_0>0$ satisfying 
\eqref{add1} 
and $n_0 \in \NN$ such that $x_n \in  [\varepsilon_0,2K-\varepsilon_0]$ for any $n \geq n_0$.
\end{lemma}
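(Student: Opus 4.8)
The plan is to combine Lemma~\ref{lem:add2}, which gives an eventual upper bound $x_n \le 2K-\varepsilon_0$, with Lemma~\ref{lem:add1}, which is a trapping statement: once the solution enters $[\varepsilon_0, 2K-\varepsilon_0]$ after time $n_1$, it stays there forever. So the entire content is to show that, under the hypothesis $\limsup_{n\to\infty} x_n \ge \sigma_1 > 0$, the solution actually does enter the interval $[\varepsilon_0, 2K-\varepsilon_0]$ at some time past $n_1$; everything else is bookkeeping about choosing $\varepsilon_0$.

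First I would fix $\varepsilon_0 > 0$ small enough to satisfy \eqref{add1} and, in addition, small enough that $\varepsilon_0 < \sigma_1$ (shrinking $\varepsilon_0$ only makes the interval $[\varepsilon_0, 2K-\varepsilon_0]$ larger, so Lemmata~\ref{lem:add1} and \ref{lem:add2} still apply). Let $n_1$ be the time from Lemma~\ref{lem:add1} and $n_2$ the time from Lemma~\ref{lem:add2} associated with this $\varepsilon_0$, and set $N = \max\{n_1, n_2\}$. For $n \ge N$ we then have $x_n \le 2K-\varepsilon_0$. Since $\limsup_{n\to\infty} x_n \ge \sigma_1 > \varepsilon_0$, there exists some $n_0 \ge N$ with $x_{n_0} > \varepsilon_0$; combined with $x_{n_0} \le 2K-\varepsilon_0$, this gives $x_{n_0} \in [\varepsilon_0, 2K-\varepsilon_0]$. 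Now $n_0 \ge n_1$, so Lemma~\ref{lem:add1} applies at time $n_0$ and yields $x_{n_0+k} \in [\varepsilon_0, 2K-\varepsilon_0]$ for all $k \in \NN$, i.e. $x_n \in [\varepsilon_0, 2K-\varepsilon_0]$ for all $n \ge n_0$, as claimed.

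The only subtlety — and the step to be careful about rather than a real obstacle — is the compatibility of the constant $\varepsilon_0$ across the three lemmata: Lemma~\ref{lem:add1} fixes $\varepsilon_0$ first and then produces $n_1$, Lemma~\ref{lem:add2} needs $\varepsilon_0$ to satisfy \eqref{add1} and then produces $n_2$, and here we additionally need $\varepsilon_0 < \sigma_1$. Since all three requirements are upper bounds on $\varepsilon_0$, any sufficiently small positive $\varepsilon_0$ works, and one simply picks such a value at the outset. I do not anticipate any genuinely hard point: the $\limsup$ hypothesis is exactly what is needed to guarantee one visit to the interval above $\varepsilon_0$, and the trapping Lemma~\ref{lem:add1} does the rest.
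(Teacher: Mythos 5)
Your proposal is correct and follows essentially the same route as the paper: apply Lemma~\ref{lem:add2} for the eventual upper bound $2K-\varepsilon_0$, use the hypothesis $\limsup_{n\to\infty}x_n\ge\sigma_1$ with $\varepsilon_0$ chosen small relative to $\sigma_1$ to obtain one entry $x_{n_0}\in[\varepsilon_0,2K-\varepsilon_0]$ past the relevant threshold time, and then invoke the trapping Lemma~\ref{lem:add1}. The only difference is cosmetic: the paper fixes the constants as $\varepsilon_1\le\sigma_1/2$ and $\varepsilon_0=\varepsilon_1/2$, while you simply take any $\varepsilon_0$ satisfying \eqref{add1} with $\varepsilon_0<\sigma_1$, and you are in fact slightly more explicit about taking $n_0$ beyond both thresholds $n_1$ and $n_2$.
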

\begin{proof}
Take $\varepsilon_1 \leq \sigma_1/2$ satisfying \eqref{add1} and apply   Lemma~\ref{lem:add2}. Then, there exists $n_1\in \mathbf N$ such that $x_n \leq 2K-\varepsilon_1/2$
for all $n\ge n_1$. As $\limsup\limits_{n \to \infty} x_n \geq \sigma_1$, we can choose  $n_0\ge n_1$ such that 
$x_{n_0}>\sigma_1/2$. Define $\varepsilon_0:=\varepsilon_1/2$, which also 
satisfies \eqref{add1}.  Then  $x_{n_0} \in [\varepsilon_0, 2K-\varepsilon_0]$, which, by Lemma~\ref{lem:add1},  
implies $x_{n_0+k} \in  [\varepsilon_0,2K-\varepsilon_0]$ for any $k \in \NN$.
\end{proof}

\begin{corollary}
\label{cor1}
Suppose that Assumptions~\ref{as:g12}, \ref{as:g34} and \ref{as:gammasum1}  hold, and $\varepsilon_0$ satisfies \eqref{add1}.  
Then any solution $x$ of equation \eqref{eq:ggamma} either tends to zero or is in $[\varepsilon_0,2K-\varepsilon_0]$,
starting with some $n_0$.
\end{corollary}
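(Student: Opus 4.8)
The plan is a short two-case dichotomy on $\limsup_{n\to\infty}x_n$, with the nontrivial case reduced to Lemma~\ref{lem:add3}.

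First I would record that \eqref{eq:ggamma} forces $x_n\ge 0$ for every $n$, so that exactly one of the alternatives $\limsup_{n\to\infty}x_n=0$ and $\limsup_{n\to\infty}x_n>0$ can occur. In the first case I would note that nonnegativity of the sequence upgrades $\limsup_{n\to\infty}x_n=0$ to $\lim_{n\to\infty}x_n=0$, which is the first conclusion of the corollary. In the second case I would set $\sigma_1:=\limsup_{n\to\infty}x_n>0$ and apply Lemma~\ref{lem:add3} with this $\sigma_1$: the lemma (which, I should flag, also uses Assumption~\ref{as:fxinf}) then delivers an $\varepsilon_0>0$ satisfying \eqref{add1} and an index $n_0\in\NN$ with $x_n\in[\varepsilon_0,2K-\varepsilon_0]$ for all $n\ge n_0$, i.e. the second conclusion. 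Since the two cases exhaust all possibilities, this finishes the argument.

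I do not expect any real obstacle: once Lemma~\ref{lem:add3} is on hand the corollary is essentially bookkeeping. If one preferred to avoid citing Lemma~\ref{lem:add3} directly, the same conclusion follows from Lemmas~\ref{lem:add1} and~\ref{lem:add2} (pick $\varepsilon_0<\sigma_1/2$ satisfying \eqref{add1}; Lemma~\ref{lem:add2} confines the solution to $[0,2K-\varepsilon_0]$ eventually, along a subsequence realizing the $\limsup$ the solution lands in $[\varepsilon_0,2K-\varepsilon_0]$ at some later index, and Lemma~\ref{lem:add1} traps it there afterwards). The two points I would be careful to spell out are: (i) the step from $\limsup_{n\to\infty}x_n=0$ to $\lim_{n\to\infty}x_n=0$, which relies on $x_n\ge 0$; and (ii) that the $\varepsilon_0$ in the second conclusion is the one produced by Lemma~\ref{lem:add3} and so depends on the solution via $\sigma_1$, hence the statement is to be read with $\varepsilon_0$ selected after the solution rather than before it.
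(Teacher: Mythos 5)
Your argument is correct and matches the paper's intended derivation: the corollary is stated without proof as an immediate consequence of Lemma~\ref{lem:add3} (equivalently Lemmas~\ref{lem:add1}--\ref{lem:add2}), which is exactly the dichotomy on $\limsup_{n\to\infty}x_n$ that you run. Your two caveats are also apt --- the corollary's hypotheses omit Assumption~\ref{as:fxinf}, which Lemmas~\ref{lem:add2}--\ref{lem:add3} genuinely need (without it a solution can drift to infinity), and the $\varepsilon_0$ in the conclusion is the one produced by Lemma~\ref{lem:add3} for the given solution rather than an arbitrarily preassigned value satisfying \eqref{add1}.
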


\begin{theorem}
\label{thm:detmain} 
Suppose that Assumptions~\ref{as:g12}, \ref{as:g34}, \ref{as:fxinf} and \ref{as:gammasum1} hold. 
Then any  solution $x$ of \eqref{eq:ggamma}  converges either to $K$ or to zero. 
\end{theorem}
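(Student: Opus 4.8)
The plan is to combine Corollary~\ref{cor1} with a contraction-type argument on the interval $[\varepsilon_0, 2K-\varepsilon_0]$. By Corollary~\ref{cor1}, a solution $x$ of \eqref{eq:ggamma} either tends to zero — in which case there is nothing left to prove — or there exist $\varepsilon_0>0$ satisfying \eqref{add1} and $n_0 \in \NN$ such that $x_n \in [\varepsilon_0, 2K-\varepsilon_0]$ for all $n \geq n_0$. So the whole content of the theorem is to show that in the second case $x_n \to K$. I would fix attention on this case from the start.

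The key idea is to track the quantity $|x_n - K|$ for $n \geq n_0$ and show it tends to zero. Since for $n \geq n_0$ all iterates lie in a compact set bounded away from zero, we have $x_{n+1} = f(x_n) + \gamma_{n+1}$ (the $\max$ with zero is inactive once $\gamma_{n+1}$ is small enough, which we may arrange by enlarging $n_0$). Hence $|x_{n+1} - K| \leq |f(x_n) - K| + |\gamma_{n+1}|$. Now I would split according to Assumption~\ref{as:g34}: on the set of $x$ where \eqref{cond:g3} holds we get the genuine contraction $|f(x_n)-K| \leq \lambda |x_n - K|$; on the set where only \eqref{cond:g4} holds — i.e. $f(x_n)-K$ and $x_n-K$ have the same sign — Remark~\ref{rem:nonexpansion} still gives the strict but non-uniform estimate $|f(x_n)-K| < |x_n-K|$. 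The first case pulls $x_n$ toward $K$ geometrically; the second case is the delicate one, since a sequence that perpetually lands in the non-contractive regime could in principle converge to some limit $\ell \neq K$. I would rule this out by a compactness/continuity argument: if $\limsup |x_n - K| = c > 0$, pass to a subsequence $x_{n_k} \to x^* $ with $|x^* - K| = c$ and $x^* \in [\varepsilon_0, 2K-\varepsilon_0]$, $x^* \neq K$; by Remark~\ref{rem:nonexpansion}, $|f(x^*) - K| = c' < c$, and by continuity of $f$ together with $\gamma_n \to 0$, the next iterates $x_{n_k + 1}$ cluster near $f(x^*)$, so eventually $|x_{n_k+1} - K| \leq c' + \eta < c$ for small $\eta$; combined with the fact that from any point of $[\varepsilon_0,2K-\varepsilon_0]$ the iterates stay in that interval and one can iterate $f$ (a fixed-point-free, non-expanding, $F>0$ on $(0,K)$ and $F<0$ on $(K,\infty)$ map) to strictly decrease the distance to $K$, the overall $\limsup$ must be strictly less than $c$, a contradiction. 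This forces $\limsup |x_n - K| = 0$, i.e. $x_n \to K$.

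Concretely, I would organize the proof as follows. First, invoke Corollary~\ref{cor1} and dispose of the convergence-to-zero alternative. Second, for the remaining case, enlarge $n_0$ so that $|\gamma_n| < \delta$ for $n \geq n_0$ with $\delta$ to be chosen, and record that $x_{n+1} = f(x_n)+\gamma_{n+1}$ and $|x_{n+1}-K| \leq |f(x_n)-K| + \delta$ for $n \geq n_0$. Third, establish the uniform-on-compacta contraction: show there is $q \in (\lambda,1)$ and $m \in \NN$ such that $|f^{(m)}(x) - K| \leq q\, |x - K|$ for all $x \in [\varepsilon_0, 2K-\varepsilon_0]$ — this follows because $g(x) := |f^{(m)}(x)-K| / |x-K|$ is continuous and $<1$ on the compact set $\{x \in [\varepsilon_0,2K-\varepsilon_0] : x \neq K\}$ (using Remark~\ref{rem:nonexpansion} applied $m$ times, with $m$ large enough that at least one iterate necessarily hits the $\lambda$-regime, or simply taking $m=1$ and using a $\sup<1$ argument if one checks it holds — whichever is cleanest), extended continuously to $x=K$. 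Fourth, combine: for $n \geq n_0$, $|x_{n+m} - K| \leq q |x_n - K| + C\delta$ where $C$ absorbs the $m$ accumulated perturbation terms amplified by the continuity modulus of $f^{(m)}$; iterating this recursion over blocks of length $m$ yields $\limsup_{n\to\infty} |x_n - K| \leq C\delta/(1-q)$. Fifth, since $\delta>0$ was arbitrary (take $n_0 = n_0(\delta) \to \infty$ as $\delta \to 0$), conclude $\limsup_{n\to\infty}|x_n-K| = 0$, hence $x_n \to K$.

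The main obstacle I anticipate is the third step — getting a \emph{uniform} strict contraction on the compact interval. Assumption~\ref{as:g34} only gives a dichotomy pointwise, and on the set where merely \eqref{cond:g4} holds the non-expansion from Remark~\ref{rem:nonexpansion} is strict but not uniform, so $\sup$ of the ratio $|f(x)-K|/|x-K|$ over $[\varepsilon_0,2K-\varepsilon_0]\setminus\{K\}$ could a priori equal $1$ (approached near the endpoints or near $K$). Resolving this cleanly is where care is needed: one route is to pass to an iterate $f^{(m)}$ and argue that a point cannot remain in the "bad" regime forever because $F = f - x$ has a definite sign on each side of $K$ and does not vanish, so the orbit is strictly monotone toward $K$ on each side and must eventually enter a neighborhood of $K$ where \eqref{cond:g3} can be leveraged — but this requires a compactness argument to make the "eventually" uniform over starting points, and one must handle the endpoints $\varepsilon_0$ and $2K-\varepsilon_0$ and the point $K$ itself with continuity. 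An alternative, perhaps more robust, route avoids claiming uniform contraction and instead argues directly: suppose $\limsup|x_n-K| = c > 0$, extract a convergent subsequence, and derive a contradiction from the strict decrease guaranteed by Remark~\ref{rem:nonexpansion} together with continuity and $\gamma_n \to 0$, tracking enough consecutive iterates to push below $c$. I would likely present the latter argument as it sidesteps the endpoint subtleties, though I expect the paper's own proof to take the former, more quantitative, block-contraction approach.
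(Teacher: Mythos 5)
Your primary route (Step 3) contains a genuine gap: the uniform contraction $|f^{(m)}(x)-K|\le q\,|x-K|$, $q<1$, on $[\varepsilon_0,2K-\varepsilon_0]$ does not follow from the paper's assumptions. The set $[\varepsilon_0,2K-\varepsilon_0]\setminus\{K\}$ is not compact, the ratio $|f^{(m)}(x)-K|/|x-K|$ need not extend continuously to $x=K$, and its supremum can genuinely equal $1$: take, for instance, an increasing $f$ with $f(x)=x-(x-K)^3$ in a neighborhood of $K$ (completed globally so that Assumptions~\ref{as:g12} and \ref{as:g34} hold via \eqref{cond:g4} near $K$). Then for every fixed $m$ the ratio tends to $1$ as $x\to K$, and the orbit never ``hits the $\lambda$-regime'': Assumption~\ref{as:g34} is only a pointwise dichotomy, and \eqref{cond:g4} can hold at every point near $K$, so nothing forces \eqref{cond:g3} to be available there. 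Hence no $q<1$ exists and the block recursion of Steps 4--5 cannot be launched. Your fallback route is also incomplete as written: from $\limsup_n|x_n-K|=c>0$ and a subsequence $x_{n_k}\to x^*$ with $|x^*-K|=c$ you deduce $|x_{n_k+1}-K|\le c'+\eta<c$, but this does not contradict the value of the limsup, which may be attained along entirely different indices. To close that argument one has to look \emph{backwards}, not forwards: every limit point of the orbit is the image under $f$ of another limit point (pass to a further subsequence of $x_{n_k-1}$ and use $\gamma_n\to0$ and continuity), so a limit point at maximal distance $c>0$ from $K$ would be $f(z)$ with $|z-K|\le c$, contradicting the strict inequality of Remark~\ref{rem:nonexpansion}. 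That invariance-of-the-limit-set step is the missing ingredient, and it is not in your write-up.

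For comparison, the paper's proof needs no contraction of ratios at all. After reducing to orbits lying in $[\varepsilon_0,2K-\varepsilon_0]$ (Lemma~\ref{lem:add3}), it fixes $\delta<\varepsilon_0$ and uses compactness of $[\varepsilon_0,K-\delta]\cup[K+\delta,2K-\varepsilon_0]$ to extract a uniform drift $\sigma>0$ from $F=f-x$ (see \eqref{def_sigma}): as long as $|x_n-K|\ge\delta$ and the step stays on one side of $K$, the distance to $K$ drops by at least $\sigma/2$; if the step crosses $K$ and overshoots by more than $\delta$, then $(f(x_n)-K)(x_n-K)<0$, so \eqref{cond:g4} fails and the dichotomy of Assumption~\ref{as:g34} forces \eqref{cond:g3}, giving a decrease by at least $(1-\lambda)\delta/2$. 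Thus the orbit enters $(K-\delta,K+\delta)$ in finitely many steps, and a separate positive-invariance argument (the constants $\delta_1,\delta_2$ of \eqref{delta_i} together with \eqref{cond:g3} for crossing steps) keeps it there; letting $\delta\to0$ yields $x_n\to K$. The structural point you missed is that Assumption~\ref{as:g34} is needed only at crossing steps, where it is automatically in force, while away from $K$ the uniform positivity of $|F|$ on compact sets does all the work.
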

\begin{proof}
By Lemma~\ref{lem:add3}, it is sufficient to consider sequences $(x_n)_{n\in \mathbb N}$ which are in 
$[\varepsilon_0,2K-\varepsilon_0]$, for some $\varepsilon_0$ satisfying  \eqref{add1} and $n \ge n_0$.
Let us fix $\delta< \varepsilon_0$ and prove that $x_n \in (K-\delta,K+\delta)$ for $n$ large enough. This will  imply that any  sequence not converging to zero will converge to $K$.

Let $x_n \in [\varepsilon_0,K-\delta] \cup [K+\delta, 2K-\varepsilon_0]$.
By Assumption~\ref{as:g12}, there exist 
\begin{equation}
\label{def_sigma}
\sigma := \min\left\{ \min_{x\in [\varepsilon_0, K-\delta]} (f(x)-x),  \min_{x\in [K+\delta, 2K-\varepsilon_0]} (x-f(x)) \right\} 
>0 
\end{equation}
and $n_1 \geq n_0$ such that 
\begin{equation}
\label{est:gamma}
|\gamma_n|< \min \{ \delta, \sigma \} \frac{1-\lambda}{2} \quad \mbox{for} \quad n \geq n_1.
\end{equation}
Then, for $x_n \in [\varepsilon_0, K-\delta]$, $n>n_1$
from \eqref{est:gamma} and the definition of $\sigma$ in \eqref{def_sigma}, we have
$$
x_{n+1} = f(x_n)+ \gamma_{n+1} \geq f(x_n)-\sigma/2 \geq x_n+\sigma-\sigma/2=x_n+\sigma/2,
$$
and similarly $x_{n+1} \leq x_n-\sigma/2$ for $x_n \in [K+\delta, 2K-\varepsilon_0]$, $n>n_1$. Thus, if $(x_{n+1}-K)(x_n-K)>0$,
we have
$$|x_{n+1}-K| \leq |x_n-K|-\sigma/2.
$$  
Now, let $x_n<K$ and $x_{n+1}>K$. 
Then either $x_{n+1}\in (K ,K+\delta)$ or $x_{n+1}>K+\delta$. 
In the latter case, by \eqref{est:gamma},  we also have $(f(x_n)-K)(x_n-K)<0$, since 
\[
f(x_n)-K\ge x_{n+1}-K-|\gamma_{n+1}|\ge \delta\left[ 1-\frac{1-\lambda}{2}\right]>0.
\]
Applying  Assumption~\ref{as:g34} and recalling $\displaystyle |x_n-K| \geq \delta$, we get
\begin{eqnarray*}
|x_{n+1}-K| & \leq & |f(x_n)-K| + |\gamma_{n+1}| < \lambda |x_n-K|+ (1-\lambda)\frac{\delta}{2}
\\ & \leq & |x_n-K|-\frac{1-\lambda}{2}|x_n-K|  = \frac{1+\lambda}{2} |x_n-K|,
\end{eqnarray*}
and also $|x_{n+1}-K| \leq |x_n-K|-(1-\lambda) \delta/2$.  The case $x_{n+1}>K$ is treated similarly.

Next,
$$|x_{n+1}-K| \leq |x_n-K| -\min\{ \sigma/2, (1-\lambda) \delta/2 \},$$
as long as $|x_n-K|\geq \delta$, thus for any $n_2 \in {\mathbb N}$ there is $n>n_2$ such that 
$x_n \in (K-\delta,K+\delta)$. 

By Assumption~\ref{as:g12}, there exist $\delta_1 \in (0,\delta)$, $\delta_2\in (0,\delta)$ such that
\begin{equation}
\label{delta_i}
\min_{x\in[K-\delta,K]}f(x) \geq K-\delta_1 > K - \delta, \quad \max_{x\in[K,K+\delta]} f(x)\le K-\delta_2 < K + \delta.
\end{equation}
Assume that $n_2 \geq n_1$ is such that in addition to 
$|\gamma_n|< \min \{ \delta, \sigma \} \frac{1-\lambda}{2}$, we have $|\gamma_n|< \min \{ \delta-\delta_1, \delta-\delta_2\}$.

Let $x_n \in (K-\delta,K+\delta)$ for $n \geq n_2$.  It remains to prove that $x_{n+1} \in (K-\delta,K+\delta)$. In fact,
if $x_n \in (K-\delta, K)$ and $f(x_n) < K$  then 
$$x_{n+1} = f(x_n)+ \gamma_{n+1} \leq K+ (1-\lambda) \frac{\delta}{2} < K+ \frac{\delta}{2}< K+\delta.$$ 
Also, as $|\gamma_n|< \delta-\delta_1$,
$$x_{n+1} = f(x_n)+ \gamma_{n+1} \geq K-\delta_1+ \gamma_{n+1} > K-\delta_1 - (\delta -\delta_1) =K-\delta.$$
Similarly, if $x_n \in (K, K+\delta)$ and $f(x_n) > K$ we have
$$x_{n+1} = f(x_n)+ \gamma_{n+1} \geq K- (1-\lambda) \frac{\delta}{2} > K - \frac{\delta}{2}> K-\delta.$$
Since $|\gamma_n|< \delta-\delta_2$, we have
$$x_{n+1} = f(x_n)+ \gamma_{n+1} \leq K+\delta_2+ \gamma_{n+1} < K+\delta_2 + (\delta -\delta_2) =K+ \delta.$$

If $(x_{n+1}-K)(x_n-K)<0$ we act as above and arrive at
\[
|x_{n+1}-K| \leq \lambda |x_{n}-K| +|\gamma_{n+1}|\le \lambda \delta + (1-\lambda) \frac{\delta}{2}
<\delta.
\]
So,  in all cases $x_{n+1}\in (K-\delta,K+\delta)$.   

Finally, if there is no $\varepsilon$ such that $\limsup_{k\to \infty}x_k>\varepsilon$, then
$\lim\limits_{n\to\infty} x_n=0$, which completes the proof.
\end{proof}


\subsection{Cases when solutions cannot tend to zero} 
\label{subsec:detnottozero}

In this section we derive conditions on $f$, as well as the connection between $f$ and perturbation $\gamma$,  that guarantee  
\begin{equation}
\label{lim:xnotzero}
\lim_{n\to \infty} x_n\neq 0.
\end{equation}
Since we are interested only in \eqref{lim:xnotzero}, we actually do not need all parts of Assumption \ref{as:g12}. It is enough to suppose only that 
\begin{equation}
\label{cond:xnotzero}
\text{$f:\RR^+\to \RR^+$ is continuous, $f(0)=0$, $f(K)=K$, $f(x)>x$ for all $x\in (0, K)$.}
\end{equation}
We present  examples of equations of type \eqref{eq:ggamma} with solutions  $x_n$ satisfying $\lim\limits_{n\to \infty}x_n=0$.  We also discuss the case when  negative perturbation terms are small, and derive the minimum initial value which guarantees \eqref{lim:xnotzero}.

The following proposition shows that $\lim\limits_{n\to \infty}x_n=0$  is impossible when $\gamma_n$ 
alternatively changes sign  and some connection between $f$ and $\gamma_n$ is imposed, which generalizes 
the property of  monotonicity of the sequence $(\gamma_n)_{n\in \mathbf N}$.

\begin{proposition}
\label{prop:pmgammanew}
Let conditions \eqref{cond:gamma1} and \eqref {cond:xnotzero}  hold.  Assume  that $\gamma_n=(-1)^n\beta_n$, $n\in \mathbb N$,  where $\beta_n>0$ for all $n\in \mathbb N$, 
and 
\begin{equation}
\label{eq:fdernew}
f(x+a)-f(x)>a, \quad \text{for all} \quad x, a\in (0, \bar\delta),
\end{equation}
and 
\begin{equation}
\label{eq:fgammabeta}
f\left(\beta_{2k}  \right)>\beta_{2k+1}, \quad \text{for all big enough} \quad k\in \mathbb N.
\end{equation}
Then $\lim\limits_{n\to \infty} x_n\neq 0$ for any solution $x_n$ of \eqref{eq:ggamma}.
\end{proposition}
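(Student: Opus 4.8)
The plan is to argue by contradiction. Assume that $x_n\to 0$ for some solution $x$ of \eqref{eq:ggamma}; I will produce a subsequence bounded away from zero, which is absurd. Since $x_n\to 0$, $\gamma_n\to 0$, and $f$ is continuous with $f(0)=0$, I can fix $N\in\mathbb N$ so large that for every $n\ge N$ one has
\[
x_n<\min\{\bar\delta,K\},\qquad \beta_n<\bar\delta,\qquad f(x_n)<\bar\delta,
\]
and moreover $f(\beta_{2j})>\beta_{2j+1}$ whenever $2j\ge N$ (absorbing the ``big enough $k$'' threshold of \eqref{eq:fgammabeta} into $N$). Recall that \eqref{cond:xnotzero} gives $f(y)>y$ for $y\in(0,K)$, and that \eqref{eq:fdernew} says precisely that $f(x+a)-f(x)>a$ whenever $x,a\in(0,\bar\delta)$.

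The core of the argument is a one-cycle estimate. Fix an odd index $m\ge N$ with $x_m>0$. Since $m+1$ is even, $\gamma_{m+1}=\beta_{m+1}>0$, and as $f\ge 0$ we get $x_{m+1}=f(x_m)+\beta_{m+1}$; since $m+2$ is odd, $\gamma_{m+2}=-\beta_{m+2}$, so
\[
x_{m+2}\ \ge\ f(x_{m+1})-\beta_{m+2}\ =\ f\bigl(\beta_{m+1}+f(x_m)\bigr)-\beta_{m+2}.
\]
Now apply \eqref{eq:fdernew} with $x=\beta_{m+1}\in(0,\bar\delta)$ and $a=f(x_m)\in(0,\bar\delta)$ — this is the decisive move, since it amplifies the positive perturbation $\beta_{m+1}$ to $f(\beta_{m+1})$:
\[
f\bigl(\beta_{m+1}+f(x_m)\bigr)\ >\ f(\beta_{m+1})+f(x_m)\ >\ f(\beta_{m+1})+x_m,
\]
the last step using $f(x_m)>x_m$. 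Because $m+1$ is even and $m+1\ge N$, \eqref{eq:fgammabeta} gives $f(\beta_{m+1})>\beta_{m+2}$, hence
\[
x_{m+2}\ >\ f(\beta_{m+1})+x_m-\beta_{m+2}\ >\ x_m.
\]

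To finish, note that the case $x_m=0$ is even easier: then $x_{m+1}=f(0)+\beta_{m+1}=\beta_{m+1}$ and $x_{m+2}\ge f(\beta_{m+1})-\beta_{m+2}>0$. Consequently, starting from any odd index $\ge N$ we reach, after at most one cycle, an odd index $m_0\ge N$ with $x_{m_0}>0$; the one-cycle estimate then applied inductively gives $x_{m_0+2j}>x_{m_0+2(j-1)}>\dots>x_{m_0}>0$ for all $j\ge 1$. Thus the subsequence $(x_{m_0+2j})_{j\ge 0}$ of $(x_n)$ is bounded below by $x_{m_0}>0$, contradicting $x_n\to 0$, so $\lim_{n\to\infty}x_n\neq 0$. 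I expect the only genuine subtlety to be the one just highlighted: recognizing that \eqref{eq:fdernew} should be applied to the increment $\beta_{m+1}+f(x_m)$ over the base point $\beta_{m+1}$, so that the hypothesis \eqref{eq:fgammabeta} bounding $f(\beta_{m+1})$ below by the next negative perturbation $\beta_{m+2}$ is exactly what makes the cycle net-increasing; everything else is routine bookkeeping (collected into the choice of $N$) ensuring that all arguments of $f$ stay in $(0,\bar\delta)$ and $(0,K)$.
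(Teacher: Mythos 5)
Your proposal is correct and follows essentially the same route as the paper: argue by contradiction on a tail where everything is small, use \eqref{eq:fdernew} to carry the accumulated gain through $f$ and \eqref{eq:fgammabeta} to dominate the next negative perturbation, cycle by cycle. The only difference is bookkeeping — you prove the two-step monotonicity $x_{m+2}>x_m$ along odd indices (and explicitly handle the clipped case $x_m=0$), whereas the paper fixes the positive quantity $\lambda_{N_\delta}=f(\beta_{N_\delta})-\beta_{N_\delta+1}$ after the first cycle and shows by induction that it remains a lower bound — both yielding the same contradiction with $x_n\to 0$.
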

\begin{proof}
Note that condition \eqref{eq:fdernew} implies that $f$ is increasing on $(0, \bar \delta)$.

Assuming that $\lim_{n\to \infty} x_n=0$, for $\delta\in (0, \bar \delta)$ 
we can find $N_\delta\in \mathbb N$ such that 
\[
x_n<\delta, \quad n\ge N_\delta.
\]

Without loss of generality we can suppose that $N_\delta$ is even, so $\gamma_{N_\delta}=\beta_{N_\delta}$. Then,
\begin{eqnarray}
x_{N_\delta}&=&f(x_{N_\delta-1})+\gamma_{N_\delta}\ge \gamma_{N_\delta}=\beta_{N_\delta}.\nonumber\\
x_{N_\delta+1}&=&f(x_{N_\delta})+\gamma_{N_\delta+1}\ge f(\beta_{N_\delta})-\beta_{N_\delta+1}>0.\label{est:lambdaN0}
\end{eqnarray}
Denote $\lambda_{N_\delta}:=f(\beta_{N_\delta})-\beta_{N_\delta+1}$. Then
\begin{eqnarray}
x_{N_\delta+2}&=&f(x_{N_\delta+1})+\gamma_{N_\delta+2}\ge 
x_{N_\delta+1}+\beta_{N_\delta+2}\ge f(\beta_{N_\delta})-\beta_{N_\delta+1}+\beta_{N_\delta+2}\nonumber\\
&=&\lambda_{N_\delta}+\beta_{N_\delta+2}.\nonumber\\
x_{N_\delta+3}&=&f(x_{N_\delta+2})+\gamma_{N_\delta+3}\ge f(\lambda_{N_\delta}+\beta_{N_\delta+2})-\beta_{N_\delta+3}\label{est:lambdaN}\\
&=&f(\lambda_{N_\delta}+\beta_{N_\delta+2})-f(\beta_{N_\delta+2})+f(\beta_{N_\delta+2})-\beta_{N_\delta+3}\nonumber\\
&\ge &\lambda_{N_\delta}
+f(\beta_{N_\delta+2})-\beta_{N_\delta+3}
\ge\lambda_{N_\delta} .\nonumber
\end{eqnarray}
Applying induction we can prove that for each $k\in \mathbb N$,
$$
x_{N_\delta+2k}\ge \lambda_{N_\delta}+\beta_{N_\delta+2k}, \quad
x_{N_\delta+2k+1}\ge \lambda_{N_\delta},
$$
which contradicts to the assumption that $\lim_{n\to \infty} x_n=0$.   
\end{proof}

\begin{corollary}
\label{cor:altsign}
If in Proposition \ref{prop:pmgammanew} we omit \eqref{eq:fdernew} and instead of condition \eqref{eq:fgammabeta} assume 
that $(\beta_n)$ is a nonincreasing sequence, then $\lim\limits_{n\to \infty} x_n\neq 0$ for any solution $x_n$ of \eqref{eq:ggamma}.  
\end{corollary}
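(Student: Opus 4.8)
The plan is to reduce Corollary~\ref{cor:altsign} to the computation already carried out in the proof of Proposition~\ref{prop:pmgammanew}, showing that the hypothesis ``$(\beta_n)$ nonincreasing'' can be substituted for the two conditions \eqref{eq:fdernew} and \eqref{eq:fgammabeta} at the only points where those conditions were actually used. First I would argue by contradiction, assuming $\lim_{n\to\infty}x_n=0$. Then, exactly as before, fix $\delta>0$ and pick an even $N_\delta\in\mathbb N$ with $x_n<\delta$ for $n\ge N_\delta$, so that $\gamma_{N_\delta}=\beta_{N_\delta}$ and $x_{N_\delta}\ge\beta_{N_\delta}$.

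The key observation is that wherever the original proof invoked $f(x+a)-f(x)>a$ on $(0,\bar\delta)$, one may instead use only the minimal facts retained from \eqref{cond:xnotzero}: $f(x)>x$ on $(0,K)$, together with $f(x)>0$ for $x>0$. Concretely, the chain starting from \eqref{est:lambdaN0} should be rerun with $f(\beta_{N_\delta})$ replaced simply by the bound $f(\beta_{N_\delta})>\beta_{N_\delta}\ge\beta_{N_\delta+1}$, where the last inequality is monotonicity of $(\beta_n)$; this gives $x_{N_\delta+1}\ge f(\beta_{N_\delta})-\beta_{N_\delta+1}>0$, so the quantity $\lambda_{N_\delta}:=f(\beta_{N_\delta})-\beta_{N_\delta+1}$ is positive, precisely as needed. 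For the even step, $x_{N_\delta+2}\ge x_{N_\delta+1}+\beta_{N_\delta+2}\ge\lambda_{N_\delta}+\beta_{N_\delta+2}$ follows just from $f(y)\ge y$ on the range in question (valid since all iterates lie below $\delta<\bar\delta\le K$). For the odd step, where the original argument used the superadditivity \eqref{eq:fdernew} to peel off $f(\lambda_{N_\delta}+\beta_{N_\delta+2})-f(\beta_{N_\delta+2})\ge\lambda_{N_\delta}$, I would instead bound directly: $x_{N_\delta+3}\ge f(\lambda_{N_\delta}+\beta_{N_\delta+2})-\beta_{N_\delta+3}\ge(\lambda_{N_\delta}+\beta_{N_\delta+2})-\beta_{N_\delta+3}\ge\lambda_{N_\delta}$, using $f(y)>y$ and then $\beta_{N_\delta+2}\ge\beta_{N_\delta+3}$. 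An easy induction then yields $x_{N_\delta+2k}\ge\lambda_{N_\delta}+\beta_{N_\delta+2k}$ and $x_{N_\delta+2k+1}\ge\lambda_{N_\delta}$ for all $k$, contradicting $x_n\to0$ since $\lambda_{N_\delta}>0$.

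The main thing to be careful about, rather than a genuine obstacle, is ensuring the monotonicity of $(\beta_n)$ is applied at indices of the correct parity: the telescoping produces pairs $(\beta_{N_\delta+2k},\beta_{N_\delta+2k+1})$ with even index first, so $\beta_{N_\delta+2k}\ge\beta_{N_\delta+2k+1}$ is exactly an instance of $\beta_n\ge\beta_{n+1}$ and no further structure is needed. One should also note explicitly that $F(x)=f(x)-x>0$ on $(0,K)$ from \eqref{cond:xnotzero} is what legitimizes every replacement of $f(y)$ by $y$ above, and that \eqref{cond:gamma1} is still used only to produce $N_\delta$. With these substitutions the proof of Proposition~\ref{prop:pmgammanew} goes through verbatim, which establishes the corollary.
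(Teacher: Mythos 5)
Your overall plan --- telescope with $f(y)\ge y$ and use the nonincreasingness of $(\beta_n)$ through pairs of the correct parity --- is the same as the paper's, but the two inequalities you carry over verbatim from the proof of Proposition \ref{prop:pmgammanew} are precisely the ones that secretly use the hypothesis the corollary drops. The bound $x_{N_\delta+1}\ge f(\beta_{N_\delta})-\beta_{N_\delta+1}$ in \eqref{est:lambdaN0}, and likewise $x_{N_\delta+3}\ge f(\lambda_{N_\delta}+\beta_{N_\delta+2})-\beta_{N_\delta+3}$ in \eqref{est:lambdaN}, are obtained there by applying $f$ to both sides of $x_{N_\delta}\ge\beta_{N_\delta}$, respectively $x_{N_\delta+2}\ge\lambda_{N_\delta}+\beta_{N_\delta+2}$; this requires $f$ to be nondecreasing near $0$, and in the Proposition that monotonicity is exactly what is deduced from \eqref{eq:fdernew}. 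Once \eqref{eq:fdernew} is omitted, condition \eqref{cond:xnotzero} gives no comparison between $f(x_{N_\delta})$ and $f(\beta_{N_\delta})$, so these steps are unjustified; in fact your claimed bounds $x_{N_\delta+2k+1}\ge\lambda_{N_\delta}=f(\beta_{N_\delta})-\beta_{N_\delta+1}$ can simply be false: give $f$ a tall narrow bump at the point $\beta_{N_\delta}$ (keeping $f(x)>x$ on $(0,K)$), so that $\lambda_{N_\delta}>\delta$ while every iterate stays below $\delta$.

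The repair --- and it is what the paper's own proof does --- is never to evaluate $f$ at a lower bound for an iterate, only at the iterate itself: while $x_n\in[0,\delta)\subset[0,K)$ one has $x_{n+1}\ge f(x_n)+\gamma_{n+1}\ge x_n+\gamma_{n+1}$, and telescoping from the even index $N_\delta$ gives, for every $k$, $x_{N_\delta+2k+1}\ge \beta_{N_\delta}-\beta_{N_\delta+1}+\sum_{j=1}^{k}\left[\beta_{N_\delta+2j}-\beta_{N_\delta+2j+1}\right]\ge \beta_{N_\delta}-\beta_{N_\delta+1}$ and $x_{N_\delta+2k+2}\ge \beta_{N_\delta}-\beta_{N_\delta+1}+\beta_{N_\delta+2k+2}$, where the nonincreasing property enters only through $\beta_{N_\delta+2j}\ge\beta_{N_\delta+2j+1}$ --- exactly the parity bookkeeping you describe, but applied to the $\gamma$-increments rather than inside $f$. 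In other words, your constant $\lambda_{N_\delta}=f(\beta_{N_\delta})-\beta_{N_\delta+1}$ must be replaced by the weaker constant $\beta_{N_\delta}-\beta_{N_\delta+1}$; with that substitution your induction becomes the paper's argument, which then draws the contradiction with $x_n\to 0$ from this lower bound.
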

\begin{proof}
Indeed, when  $(\beta_n)$ is nonincreasing sequence, we get 
$x_{N_\delta+1}>\beta_{N_\delta}-\beta_{N_\delta+1}$ instead of \eqref{est:lambdaN0}, and inductively, 
for each $k\in \mathbb N$,
\begin{eqnarray*}
&&
x_{N_\delta+2k+1}\ge \beta_{N_\delta}-\beta_{N_\delta+1}+\sum_{i=2}^{2k}[\beta_{N_\delta+i}-\beta_{N_\delta+i+1}]>
\beta_{N_\delta}-\beta_{N_\delta+1}>0,
\\ && 
x_{N_\delta+2(k+1)}\ge\beta_{N_\delta}-\beta_{N_\delta+1}+\beta_{N_\delta+2k+2}>\beta_{N_\delta}-
\beta_{N_\delta+1}.
\end{eqnarray*}
\end{proof}

In order to generalize the approach of Proposition \ref{prop:pmgammanew}, we combine the consecutive perturbations $\gamma_n$ with 
the same sign into groups. 
Without loss of generality assume that $\gamma_1>0$. Also assume that there are infinitely many negative and infinitely many positive  $\gamma_n$. Let
\[
n_0=\inf\{i>1: \gamma_i<0\}-1,  \quad n_1=\inf\{i>n_0: \gamma_i>0\}-1, 
\] 
and similarly, for $k=1, 2, \dots$
\[
n_{2k}=\inf\{i> n_{2k-1}: \gamma_i<0\}-1,  \quad n_{2k+1}=\inf\{i>n_{2k}: \gamma_i>0\}-1.
\] 
All the sets above are non-empty, so we can define
\begin{equation}
\label{def:betan}
\beta_{2k}:=\sum_{i=n_{2k-1}+1}^{n_{2k}}\gamma_i>0, \quad \beta_{2k+1}:=-\sum_{i=n_{2k}+1}^{n_{2k+1}}\gamma_i>0.
\end{equation}
Note that all $\gamma_i$ have the same sign in each of the above sums.

\begin{proposition}
\label{prop:altgroup}
Let conditions \eqref{cond:gamma1}, \eqref {cond:xnotzero},  \eqref{eq:fdernew} and \eqref{eq:fgammabeta}
hold, and there be infinitely many negative and infinitely many positive  $\gamma_n$. Let $\beta_n$ be defined as in \eqref{def:betan} and $\lim_{n\to \infty} \beta_n=0$. 
Then $\lim\limits_{n\to \infty} x_n\neq 0$ for any solution $x_n$ of \eqref{eq:ggamma}.
\end{proposition}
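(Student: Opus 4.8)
The plan is to reduce Proposition \ref{prop:altgroup} to the situation already handled in Proposition \ref{prop:pmgammanew} by treating each maximal block of consecutive same-sign perturbations as a single ``effective'' perturbation with amplitude $\beta_k$. Suppose for contradiction that $\lim_{n\to\infty}x_n=0$. Fix $\delta\in(0,\bar\delta)$ and pick $N_\delta$ so that $x_n<\delta$ for all $n\ge N_\delta$; by discarding finitely many initial blocks we may assume that every block index we work with is $\ge$ some threshold, so that \eqref{eq:fgammabeta} applies and, since $\beta_n\to 0$, all the quantities appearing below lie in $(0,\bar\delta)$. Without loss of generality $N_\delta$ can be taken to coincide with the start $n_{2k_0}+1$ of a positive block.

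The key step is a ``block monotonicity'' estimate: if at the start of a positive block (indices $n_{2k}+1,\dots,n_{2k+1}$, all $\gamma_i>0$ with sum $\beta_{2k+1}$) we have $x_{n_{2k}}\ge c>0$, then iterating \eqref{ineq:det} across the block and using that $f(x)>x$ on $(0,\delta)$ (from \eqref{cond:xnotzero}) together with superadditivity \eqref{eq:fdernew} gives $x_{n_{2k+1}}\ge f(c)+\beta_{2k+1}$ — one simply accumulates the positive increments $\gamma_i$ while $f$ never decreases the running value. Conversely, across a negative block (indices $n_{2k-1}+1,\dots,n_{2k}$, sum of $-\gamma_i$ equal to $\beta_{2k}$), starting from $x_{n_{2k-1}}\ge f(\lambda)+\beta_{2k-1}$ for the relevant ``level'' $\lambda$, the single most damaging step is bounded below by $f(\,\cdot\,)-\beta_{2k}$, and the same superadditivity-plus-$f(x)>x$ argument used in the proof of Proposition \ref{prop:pmgammanew} (the chain \eqref{est:lambdaN0}--\eqref{est:lambdaN}) shows the running level is preserved at $\ge\lambda$, now with $\lambda=f(\beta_{2k-1})-\beta_{2k}>0$ guaranteed by \eqref{eq:fgammabeta}. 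Thus the blocks behave exactly like the alternating single perturbations $(-1)^n\beta_n$ of Proposition \ref{prop:pmgammanew}: one obtains, by induction on $k$, a uniform lower bound $x_{n_{2k+1}}\ge\lambda_0>0$ with $\lambda_0:=f(\beta_{N_\delta})-\beta_{N_\delta+1}$ (using the block-index relabeling), contradicting $x_n\to0$.

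Concretely I would: (i) record that \eqref{eq:fdernew} forces $f$ increasing on $(0,\bar\delta)$ and that \eqref{cond:xnotzero} gives $f(x)\ge x$ there, so that within any positive block $x_{j+1}=f(x_j)+\gamma_{j+1}\ge x_j+\gamma_{j+1}$, whence $x_{n_{2k+1}}\ge x_{n_{2k}}+\beta_{2k+1}$, and in fact $x_{n_{2k+1}}\ge f(x_{n_{2k}})+\beta_{2k+1}$ by applying $f$ once at the first step; (ii) for a negative block, show $x_{n_{2k}}\ge f(x_{n_{2k-1}})-\beta_{2k}$ by noting that subtracting the positive numbers $-\gamma_i$ spread over several steps is no worse than subtracting their sum in one step, because each intermediate $f$ application is non-decreasing and superadditive — this is literally the computation in \eqref{est:lambdaN} with $\beta_{N_\delta+2}$ replaced by the partial sums of the block; (iii) chain (i) and (ii) across consecutive blocks exactly as in Proposition \ref{prop:pmgammanew} to get the persistent positive lower bound; (iv) conclude.

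The main obstacle is step (ii): making rigorous that a negative perturbation distributed over a whole block of length $\ell$ does not hurt more than a single lump perturbation of the same total size $\beta_{2k}$. The danger is that an intermediate iterate could be driven very small (or to $0$ via the $\max$), after which $f$ — being only $\ge\mathrm{id}$, not expanding by a definite factor near $0$ — fails to recover the loss. One must therefore argue inductively \emph{within} the block, maintaining the invariant ``current value $\ge f(\text{level})-(\text{partial sum of }-\gamma_i\text{ so far})$'': at each step superadditivity \eqref{eq:fdernew} converts $f\big(f(\text{level})-s\big)$ into $\ge \text{level} + \big(f(\text{level})-\text{level}\big) - \text{(next increment)} \ge \text{level}-s'$, so the level is never lost and the $\max$ is never activated as long as the final bound $f(\beta_{N_\delta})-\beta_{N_\delta+1}$ stays positive, which is exactly \eqref{eq:fgammabeta}. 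Once this block-level invariant is set up, the inter-block induction is a routine copy of the argument already given for Proposition \ref{prop:pmgammanew}.
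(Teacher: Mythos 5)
Your argument is correct and is essentially the paper's own proof: the paper likewise lumps each maximal same-sign block into an effective perturbation, accumulates within blocks using $f(x)\ge x$, applies monotonicity of $f$ once at the start of each negative block, and maintains the level $\lambda_k=f(\beta_{2k})-\beta_{2k+1}>0$ across block pairs via the superadditivity \eqref{eq:fdernew} and \eqref{eq:fgammabeta}, exactly as in Proposition \ref{prop:pmgammanew}; moreover, the obstacle you flag in step (ii) is harmless, since every estimate is a one-sided lower bound and the $\max$ only raises values, so the telescoping within a negative block survives even if the running bound dips toward zero. The only slip is notational: by \eqref{def:betan} the even-indexed $\beta_{2k}$ are the positive-block sums and the odd-indexed $\beta_{2k+1}$ the negative ones, so your block parities should be swapped when invoking \eqref{eq:fgammabeta}.
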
  
\begin{proof}
The proof is similar to the proof of Proposition \ref{prop:pmgammanew}.  We
 start the estimation from $n_{2k-1}$ such that $n_{2k-1}>N_\delta$ and \eqref{eq:fgammabeta} holds.  Applying  conditions \eqref {cond:xnotzero} and monotonicity of $f$, we get
\begin{eqnarray*}
x_{n_{2k-1}+1}&=&f(x_{n_{2k-1}})+\gamma_{n_{2k-1}+1}\ge \gamma_{n_{2k-1}+1}>0,\\
x_{n_{2k}}&\ge &\sum_{i=1}^{n_{2k}-n_{2k-1}}\gamma_{n_{2k-1}+i}=\beta_{2k},\\
x_{n_{2k+1}}&\ge &f(\beta_{2k})+ \sum_{i=1}^{n_{2k+1}-n_{2k}}\gamma_{n_{2k}+i}= f(\beta_{2k})-\beta_{2k+1}:=\lambda_k,\\
\text{and} &&\\
x_{n_{2k+2}}&\ge& \lambda_k+\beta_{2k+2}, \quad x_{n_{2k+3}}\ge f\left(\lambda_k+\beta_{2k+2}\right)-\beta_{2k+3}.
\end{eqnarray*}
Acting as in  \eqref{est:lambdaN} we arrive at
\begin{equation*}
\begin{split}
x_{n_{2k+3}}\ge f\left(\lambda_k+\beta_{2k+2}\right)- f\left(\beta_{2k+2}\right)+f\left(\beta_{2k+2}\right)-\beta_{2k+3}
\ge \lambda_k + f\left(\beta_{2k+2}\right)-\beta_{2k+3}\ge \lambda_k.
\end{split}
\end{equation*}
Applying induction we show $x_{n_{2k+s}} \ge \lambda_k$, for each  $s\in \mathbb N$.
\end{proof}

\begin{remark}
\label{rem:fgamma}
Let  \eqref {cond:xnotzero} be fulfilled.
Relation \eqref{eq:fdernew} holds in particular, when  $f$ is differentiable on $(0, K)$ and the derivative $f'$ is greater than 1 in some right neighborhood of $0$ (however $f'(0)$ can be equal to 1). 

Note that \eqref{eq:fgammabeta} is a less restrictive condition than monotonicity of $\beta_n$.  
Relation \eqref{eq:fgammabeta} holds in the following cases:
\begin{enumerate}
\item [(i)] sequence $(\beta_n)_{n\in \mathbb N}$ is decreasing;
\item [(ii)] $f(x)\ge (1+\mu)x$ for $x\in (0, \delta)$ and  $(1+\mu)\beta_{2k}\ge \beta_{2k+1}$ for some $\mu>0$, 
$\delta \in (0, K)$ and all $k\in \mathbb N$;
\item [(iii)] $f(x)\ge x+x^\nu$ for $x\in (0, \delta)$ and 
$\beta_{2k}+\left( \beta_{2k}\right)^\nu\ge \beta_{2k+1}$ for some $\delta\in (0, K)$, $\nu>1$ and all $k\in \mathbb N$.
\end{enumerate}
Note that in the cases (ii) and (iii) the sum of consecutive negative perturbations can be greater than the sum of positive ones. 
For example, in (ii) it can be  $(1+\mu)\beta_{2k}\ge \beta_{2k+1}> \beta_{2k}$.
Also, the bigger the derivative $f'(0)$ is, the less restrictions we need to impose on quasi-monotonicity of $\beta_n$.
\end{remark}

The following examples show that $\lim\limits_{n\to \infty}x_n=0$ is possible for equations with quickly decaying perturbations which 
either remain negative after some moment, or do not satisfy 
condition~\eqref{eq:fgammabeta}.
In Example \ref{ex:exlim0} the function $f$ grows quickly in the right neiborhhood of zero (actually $f'(0)=\infty$),  $\gamma_n<0$ and $\lim\limits_{n\to \infty} x_n=0$.

\begin{example}
\label{ex:exlim0}
Let $K=1$, $f(x)=\sqrt{x}$, $x \geq 0$ and 
$\displaystyle
\gamma_{n+1}=-\frac 1{n^2}+\frac 1{(n+1)^4}<0$.
Then Assumption \ref{as:g12} and  condition \eqref{cond:g4} hold. 
However, $x_n=\frac{1}{n^4}$ is a solution of the equation
\begin{equation}
\label{ex1eq1}
x_{n+1}=\max \left\{ \sqrt{x_n}+\gamma_{n+1}, 0 \right\}, \quad x_1=1.
\end{equation}
In addition,
$\displaystyle
\sum_{n=1}^\infty|\gamma_n|<\infty, \quad f'(0)=\infty,
$
~$
f(x)>2x, \quad x\in (0, 0.25)$,
$\displaystyle |\gamma_{n}|=\frac 1{n^2}-\frac 1{(n+1)^4}$ decreases and $\lim\limits_{n\to \infty}\gamma_n=0$.
Let us note that all solutions of \eqref{ex1eq1} with $x_1 \in (0,1]$ either tend to zero or are identically equal to zero, starting 
with a certain $n \in \NN$.
\end{example}

In Example  \ref{ex:exlim1} the function $f$ is the same as in Example  \ref{ex:exlim0}, $\gamma_n$ is an alternating sequence, 
$(|\gamma_n|)_{n\in \mathbf N}\in {\bf \ell}_2$ and $\lim\limits_{n\to \infty} x_n=0$.

\begin{example}
\label{ex:exlim1}
Let $f(x)=\sqrt{x}$, $x\geq 0$, $\varepsilon>0$, $x_2=\frac{1}{16}$ and 
$\displaystyle \gamma_{2(n+1)}=-\frac {\sqrt{1+\varepsilon}}{2n}+\frac 1{[2(n+1)]^4}<0$,
while
$\displaystyle 
\gamma_{2n+1}= \frac {\varepsilon}{(2n)^2}>0$.
Then
\[
x_{2n}=\frac 1{(2n)^4}, \quad x_{2n+1}=\frac {1+\varepsilon}{(2n)^2}.
\]
Indeed, $\displaystyle x_{2n+1}=\frac 1{(2n)^2}+\frac {\varepsilon}{(2n)^2}=\frac {1+\varepsilon}{(2n)^2}$ and 
\[
x_{2(n+1)}=\frac {\sqrt{1+\varepsilon}}{2n}-\frac {\sqrt{1+\varepsilon}}{2n}+\frac 1{[2(n+1)]^4}
= \frac 1{[2(n+1)]^4}.
\]
\end{example}

\begin{remark}
Function $f$ and perturbations $\gamma$ in Examples~\ref{ex:exlim0}-\ref{ex:exlim1}
do not satisfy \eqref{eq:fgammabeta}. 
In Example~\ref{ex:exlim0} there are no positive perturbations, in Example \ref{ex:exlim1} we have, for $n$ big enough,
\[
f(\gamma_{2n+1})=f(\beta_{2n})=\frac {\sqrt{\varepsilon}}{2n}<\frac {\sqrt{\varepsilon+1}}{2n}-\frac {1}{[2(n+1)]^2}=\beta_{2n+1}=\gamma_{2(n+1)}.
\]
\end{remark}

Even though Examples \ref{ex:exlim0}-\ref{ex:exlim1} demonstrate the possibility for solution $x_n$ to tend to zero, in Proposition 
\ref{prop:invalpar}  we show that in some cases there exists $b>0$ such that \eqref{lim:xnotzero} still holds when $x_0>b$.  
We neither require monotonicity of perturbations $\gamma_n$  nor any conditions of type~\eqref{eq:fgammabeta}.  
However we require $f$ to be increasing in some right neighborhood of zero and the negative perturbations to be small enough. 

More precisely, we assume that
\begin{equation}
\label{cond:flambda}
\text{$f$ increases on $[0, c]$ for some $c>0$ and $\min _{x\in (c, \infty)}f(x)\ge \lambda$, for some $\lambda>0$.}
\end{equation}
For bounded real sequence $(\gamma_n)_{n\in \mathbb N}$  we define $\tilde \gamma:=\sup_{n\in \mathbb N}| \gamma_n^-|$ and 
\begin{equation}
\label{def:btildeg}
b=b(\tilde \gamma):=\inf\{x>0: f(x)-x>\tilde \gamma   \}.
\end{equation}
Note that for small enough $\tilde \gamma$ the set in the right-hand-side of \eqref{def:btildeg} is non-empty.  Asssume that $\tilde \gamma$ is so small that 
\begin{equation}
\label{cond:tildeg}
\tilde \gamma+b(\tilde \gamma)<\lambda.
\end{equation}

\begin{proposition}
\label{prop:invalpar}
Let Assumption  \ref{as:g12}  and conditions \eqref{cond:gamma1},  \eqref{cond:flambda} and \eqref{cond:tildeg} hold.  
Then any solution $x_n$ of equation \eqref{eq:ggamma} with the initial value $x_0\in \left(b(\tilde \gamma), \infty\right)$  
satisfies $x_n> b(\tilde \gamma)$.
\end{proposition}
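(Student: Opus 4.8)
The plan is to argue by induction on $n$ that $x_n > b := b(\tilde\gamma)$, using that once we are above $b$, the combined effect of the map $f$ and a perturbation bounded below by $-\tilde\gamma$ cannot push us back down to $b$ or below. The base case $n=0$ is the hypothesis $x_0 \in (b,\infty)$. For the inductive step, suppose $x_n > b$; I want to show $x_{n+1} = \max\{f(x_n)+\gamma_{n+1},0\} > b$, and since $b>0$ it suffices to show $f(x_n)+\gamma_{n+1} > b$.

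First I would split into two cases according to where $x_n$ sits relative to the constant $c$ from \eqref{cond:flambda}. \textbf{Case 1: $b < x_n \le c$.} Here $f$ is increasing on $[0,c]$, so $f(x_n) \ge$ (something close to) the value of $f$ just above $b$. The definition \eqref{def:btildeg} says $b = \inf\{x>0 : f(x)-x > \tilde\gamma\}$; I need to know that for $x$ slightly above $b$ we have $f(x)-x > \tilde\gamma$, which follows from monotonicity of $f$ near $b$ together with the infimum characterization (for $x>b$ close to $b$, there is a point in the defining set below $x$, and monotonicity of $f(x)-x$ is not guaranteed, so one must be slightly careful — this is where I expect the main technical friction). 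Granting $f(x_n) - x_n > \tilde\gamma$ for $x_n > b$, we get
\[
f(x_n) + \gamma_{n+1} \ge f(x_n) - \tilde\gamma > x_n > b,
\]
using $|\gamma_{n+1}^-| \le \tilde\gamma$, i.e. $\gamma_{n+1} \ge -\tilde\gamma$.

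\textbf{Case 2: $x_n > c$.} Now \eqref{cond:flambda} gives $f(x_n) \ge \lambda$, hence
\[
f(x_n) + \gamma_{n+1} \ge \lambda - \tilde\gamma > b,
\]
where the last inequality is exactly the assumption \eqref{cond:tildeg}, $\tilde\gamma + b < \lambda$. In both cases $f(x_n)+\gamma_{n+1} > b > 0$, so $x_{n+1} = f(x_n)+\gamma_{n+1} > b$, closing the induction.

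The main obstacle is making Case 1 airtight, i.e. deducing from the infimum definition \eqref{def:btildeg} and monotonicity of $f$ on $[0,c]$ that $f(x)-x > \tilde\gamma$ holds for \emph{every} $x \in (b,c]$, not merely for $x$ in the defining set. The clean way is: for $x > b$ there exists $y \in (b, x]$ with $f(y)-y > \tilde\gamma$ (by definition of infimum), and since $f$ is increasing on $[0,c]$, $f(x) \ge f(y) > y + \tilde\gamma$; but I still need $y + \tilde\gamma \ge \dots$ to compare with $x$ — actually it is cleaner to prove $f(x_n) + \gamma_{n+1} > b$ directly: from $f(x_n) \ge f(y) > y + \tilde\gamma > b + \tilde\gamma$ we get $f(x_n) + \gamma_{n+1} > b + \tilde\gamma - \tilde\gamma = b$. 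This uses only monotonicity and $y > b$, avoiding the need for a monotone $f(x)-x$. I would also note at the start, as the paper already remarks after \eqref{def:btildeg}, that for $\tilde\gamma$ small enough the set in \eqref{def:btildeg} is nonempty (it contains points near $K$, since $f(K)-K = 0$ fails but $f(x)-x$ is positive on $(0,K)$ and bounded below on compact subsets), so $b$ is well-defined and finite, and one checks $b < c$ can be assumed after possibly shrinking, or handled by noting the argument only needs $b$ finite and $f$ increasing on $[0,\max\{c,b\}]$ — a minor bookkeeping point I would clarify but not belabor.
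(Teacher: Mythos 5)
Your proof is correct and follows essentially the same route as the paper's: induction on $n$, with the case split $x_n\in(b,c]$ versus $x_n> c$, monotonicity of $f$ on $[0,c]$ in the first case and condition \eqref{cond:tildeg} in the second. Your handling of the first case via a point $y$ of the defining set in \eqref{def:btildeg} is a slightly more careful variant of the paper's direct use of $f(b)\ge b+\tilde\gamma$ (which implicitly invokes continuity of $f$ at $b$), but the argument is the same in substance.
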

\begin{proof}
In fact, for each $n\in \mathbb N$,
$
\gamma_n=\gamma_n^++\gamma_n^->\min\{0, -\gamma_n^-\}\ge -\tilde \gamma$.

If  $x_0\in (b, c)$ then  
$
x_1=f(x_0)+\gamma_1\ge f(b)+\gamma_1=f(b)-b+b+\gamma_1\ge \tilde \gamma +b-\tilde \gamma=b.
$
If $x_0\ge c$, then by \eqref{cond:tildeg} we have
$
x_1=f(x_0)+\gamma_1\ge \lambda-\tilde \gamma >
b.
$
Similarly, if $x_{n}\in (b, c)$,
$
x_{n+1}=f(x_n)+\gamma_{n+1}\ge \tilde \gamma +b-\tilde \gamma=b$, and
if $x_n\ge c$,
$
x_{n+1}=f(x_n)+\gamma_{n+1}\ge \lambda-\tilde \gamma>
b$ by \eqref{cond:tildeg}.
Applying induction, we conclude that the solution is persistent, where $b$ is the lower 
bound of the solution.
\end{proof}

\section{Some preliminary results on stochastic sequences}
\label {sec:prel}

In this section we present several results about the stochastic sequences, which will be useful in finding the probability  
that a solution of stochastic difference equation \eqref{eq:stochintr} converges to the equilibrium point $K$.

The following lemma states that when  $\xi_n$ are independent identically distributed random variables, for any subinterval in their support we can find any number of consecutive $\xi_n$ taking values in this subinterval with probability 1.
\begin{lemma}
\label{lem:kpos}
If Assumption \ref{as:chibound} holds and $\xi_n$ are identically distributed, 
then for each $\varepsilon\in (0, 1)$, $J\in \mathbb N$ and $L\in \mathbb N$ there exists a.s. finite random number $N_J\in \mathbb 
N$, $N_J\ge L$ such that 
\begin{equation}
\label{cond:fkappa}
\mathbb P \left\{ \omega\in \Omega: \xi_n\in [1-\varepsilon, 1], \quad n = N_J+1, \dots,  N_J+J\right\}=1.
\end{equation}
\end{lemma}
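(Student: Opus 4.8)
The plan is to use a Borel--Cantelli argument based on independence. First I would fix $\varepsilon \in (0,1)$ and $J, L \in \mathbb N$, and for each $k \in \mathbb N$ consider the ``block'' event
\[
A_k := \left\{ \omega \in \Omega : \xi_n(\omega) \in [1-\varepsilon, 1] \text{ for all } n = L + (k-1)J + 1, \dots, L + kJ \right\}.
\]
Since the $\xi_n$ are independent and identically distributed with common density $\phi$ that is strictly positive on $(-1,1)$, each single event $\{\xi_n \in [1-\varepsilon,1]\}$ has probability $p := \int_{1-\varepsilon}^1 \phi(x)\,dx > 0$ (strictly positive because $\phi>0$ on the open interval and $[1-\varepsilon,1)$ has positive Lebesgue measure inside $(-1,1)$). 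Hence $\mathbb P(A_k) = p^J > 0$ for every $k$. Moreover, the blocks involve disjoint index sets, so by independence of the $\xi_n$ the events $(A_k)_{k\in\mathbb N}$ are mutually independent.

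The key step is then the second Borel--Cantelli lemma: since $\sum_k \mathbb P(A_k) = \sum_k p^J = \infty$ and the $A_k$ are independent, we get $\mathbb P(\limsup_k A_k) = 1$, i.e. almost surely infinitely many of the $A_k$ occur. In particular, almost surely at least one $A_k$ occurs; on that event, setting $N_J := L + (k-1)J$ for the (random) index $k$ of an occurring block gives $N_J \ge L$ and $\xi_n \in [1-\varepsilon,1]$ for $n = N_J+1, \dots, N_J + J$, which is exactly \eqref{cond:fkappa}. One can take $N_J$ to be, say, the smallest such $L+(k-1)J$; since $\limsup_k A_k$ has full probability, $N_J$ is a.s.\ finite.

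I expect the only real subtlety to be the justification that $p > 0$, which follows immediately from Assumption~\ref{as:chibound} ($\phi_n \equiv \phi$ strictly positive on $(-1,1)$, and $[1-\varepsilon,1)\subset(-1,1)$ has positive measure), and the bookkeeping that makes the chosen blocks use disjoint sets of indices so that independence of the $A_k$ is genuine. Everything else is the standard divergence-of-probabilities form of Borel--Cantelli. An alternative phrasing avoiding ``limsup'' language: the events $\bigcup_{k=1}^m A_k$ increase to $\limsup$-type fullness, and $\mathbb P\left(\bigcap_{k=1}^m A_k^c\right) = (1-p^J)^m \to 0$ as $m\to\infty$, so $\mathbb P\left(\bigcup_{k\ge1} A_k\right) = 1$; this is perhaps the cleanest way to present it and is what I would write up.
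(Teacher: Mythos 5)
Your proof is correct and follows essentially the same route as the paper: both partition the indices beyond $L$ into disjoint blocks of length $J$, use the strict positivity of the density on $(-1,1)$ to get $p^J>0$ for each block, and conclude via $\mathbb P\bigl(\bigcap_{k=1}^m A_k^c\bigr)=(1-p^J)^m\to 0$ (your ``alternative phrasing'' is exactly the paper's argument, with the second Borel--Cantelli lemma being a mild strengthening of what is needed). Your bookkeeping of the block starting points, ensuring $N_J\ge L$, matches the paper's choice $N_J=(i_1-1)J$ up to an inessential shift.
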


\begin{proof}
Assumption \ref{as:chibound} implies that, for each $n\in \mathbb N$, 
\[
 \mathbb P\{\Omega_\varepsilon(n)\}=p_\varepsilon>0, \quad \text{where} \quad  \Omega _\varepsilon(n):=\{\omega\in \Omega:\xi_n(\omega)\in [1-\varepsilon, 1]\}.
\]
By independence of $\xi$, for each $n, J \in \mathbb N$, 
\[
\mathbb P \left\{ \omega\in \Omega: \xi_i\in [1-\varepsilon, 1], \,\,  i=n+1, \dots, n+J\right\}=p_\varepsilon^J.
\]
Therefore, for each $n\in\mathbb N$, the probability that among the random variables $\xi_{n+1}, \, \xi_{n+2}, \, \dots, \xi_{n+J}$ 
there is at least one which is not in $[1-\varepsilon, 1]$ is $1-p_\varepsilon^J$. 

Define
\begin{equation}
\label{def:Bi0}
B_i:=
\{ \omega\in \Omega: \xi_s\in [1-\varepsilon, 1], \quad s=(i-1)J+1,  \, (i-1)J+2, \dots,\,  iJ  \} 
\text{for} \quad i\ge i_0:=\frac L J+1.
\end{equation}
The conclusion of the lemma is valid if 
\[
\mathbb P \left\{\text{There exists $N_J>L$ such that $B_{N_J}$ occurs}\right\}=1.
\]
Since
\[
\mathbb P \left\{\text{At least one of  $B_{i}$ occurs, } \,\, i> i_0  \right\}=1-\mathbb P \left\{\text{All $\overline B_{i}$ occur, }  \,\,  i> i_0\right\}
\]
this is equivalent to  
\begin{equation}
\label{calc:prob0}
\mathbb P \left\{\text{All $\overline B_{i}$ occur, } \,\,  i> i_0 \right\}=0.
\end{equation}
By independence of $B_i$ we have
\[
\mathbb P \left\{\text{All $\bar B_{i}$  occur}, i=i_0+1, i_0+2, \dots, i_0+n \right\}=\mathbb P \left\{\prod_{i=i_0+1}^{i_0+n} \bar B_{i} \right\}=
[1-p_\varepsilon^J]^n.
\]
However,
\[
\left\{\text{All $\bar B_{i}$ occur, } \,\, i>i_0 \right\}\subseteq \left\{\text{All $\bar B_{i}$ occur, $i=i_0+1, i_0+2, \dots, i_0+n$.} \right\},
\]
and then,
\[
\mathbb P \left\{\text{All $\bar B_{i}$ occur, } \,\, i\in \mathbb N \right\}\le  \mathbb P\left\{\text{All $\bar B_{i}$ occur, $i=1, 2, \dots, n$.} \right\}=[1-p_\varepsilon^J]^n.
\]
Since in above inequality, $n$ can be arbitrarily large, this implies \eqref {calc:prob0}. So, for some  random $i_1>i_0$  the event 
$B_{i_1}$ occurs with probability 1.  By \eqref{def:Bi0}  we have $(i_1-1)J>(i_0-1)J\ge L$.
So we have proved \eqref{cond:fkappa} for $N_J:=(i_1-1)J$.
\end{proof}

\begin{remark}
\label{rem:noniid}
We can relax the assumption that $\xi_n$ are identically distributed by assuming instead that there exists $\varepsilon\in (0, 1)$  and a number $p_\varepsilon>0$ such that, $\forall n\in \mathbb N$,
\begin{equation}
\label{eq:estprob}
\mathbb P \left\{ \omega\in \Omega: \xi_n\in [1-\varepsilon, 1]\right\}\ge p_\varepsilon.
\end{equation}
Repeating the proof of Lemma \ref{lem:kpos}, instead of equality, we get now inequalities 
\[
\mathbb P \left\{ \omega\in \Omega: \xi_i\in [1-\varepsilon,  1], \quad i=n, n+1, \dots, n+J.\right\}\ge p_\varepsilon^J,
\]
\[
\mathbb P \left\{\text{All $\bar B_{i}$  occur}, i=1, 2, \dots, n. \right\}=\mathbb P \left\{\prod_{i=1}^n \bar B_{i} \right\}\le 
[1-p_\varepsilon^J]^n.
\]
So,
\[
\mathbb P \left\{\text{All $\bar B_{i}$ occur, } \,\, i\in \mathbb N \right\}\le  \mathbb P\left\{\text{All $\bar B_{i}$ occur, $i=1, 2, \dots, n$.} \right\}\le [1-p_\varepsilon^J]^n.
\]
Note that, in particular,  \eqref{eq:estprob} holds  if, for each $t\in (0, 1)$,
\[
\inf_{n\in \mathbb N}\{\phi_n(t)\}\ge \tilde\phi(t)>0,
\]
where $\tilde \phi$ is continuous and $\phi_n$ is a density function for the random variable $\xi_n$.
\end{remark}

In Lemma~\ref{lem:lim0} we claim that if a random sequence converges to zero on a set with non-zero probability $p$ then, for each 
$\alpha\in (0, p)$, it  converges uniformly on the set with a smaller probability $p-\alpha$. 
The proof of the this result, as well as of Lemma~\ref{lem:infty1}, is deferred to Appendix.

\begin{lemma}
\label{lem:lim0}
Let $(x_n)_{n\in \mathbb N}$ be a random sequence, $\bar \Omega:=\{\omega\in \Omega: \lim_{n\to \infty} x_n(\omega)=0\}$ and $\mathbb P\{\bar \Omega\}=p>0$.

Then, for each $\delta, \alpha>0$,  there exists $\bar N=\bar N(\delta, \alpha)>0$ and $\Omega(\delta, \alpha)\subseteq \bar \Omega$, $\mathbb P\{\Omega(\delta, \alpha)\}\ge p-\alpha$, such that, for all $n\ge \bar N$, $\omega \in \Omega(\delta, \alpha)$,
\[
x_n(\omega)\in [0, \delta).
\]
\end{lemma}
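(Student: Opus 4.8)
The statement is essentially a quantitative reformulation of pointwise convergence: on the set $\bar\Omega$ where $x_n\to 0$, the convergence is automatically uniform once we are allowed to discard an arbitrarily small piece of the set. The plan is to exhaust $\bar\Omega$ from inside by the standard "level sets of the tail supremum'' and invoke continuity of the measure from below.

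\medskip

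First I would fix $\delta>0$ and $\alpha>0$ and, for each $N\in\mathbb N$, set
\[
\Omega_N:=\Big\{\omega\in\bar\Omega:\ x_n(\omega)\in[0,\delta)\ \text{ for all } n\ge N\Big\}
=\bar\Omega\cap\bigcap_{n\ge N}\{x_n<\delta\}.
\]
These sets are measurable (countable intersection of measurable sets, intersected with $\bar\Omega$), and they are nested: $\Omega_N\subseteq\Omega_{N+1}$. Next I would observe that $\bigcup_{N\in\mathbb N}\Omega_N=\bar\Omega$: if $\omega\in\bar\Omega$ then $\lim_{n\to\infty}x_n(\omega)=0$, so by the definition of the limit there is some $N=N(\omega)$ with $x_n(\omega)<\delta$ for all $n\ge N$, i.e. $\omega\in\Omega_N$. (Here one uses that $x_n\ge 0$ for the solutions in question, so $x_n\in[0,\delta)$ is the same as $x_n<\delta$; for a general random sequence one would write $|x_n|<\delta$ instead, but the statement already phrases the conclusion as $x_n(\omega)\in[0,\delta)$.) Then by continuity of $\mathbb P$ along the increasing sequence $(\Omega_N)$,
\[
\lim_{N\to\infty}\mathbb P\{\Omega_N\}=\mathbb P\{\bar\Omega\}=p.
\]
Hence there exists $\bar N=\bar N(\delta,\alpha)\in\mathbb N$ with $\mathbb P\{\Omega_{\bar N}\}\ge p-\alpha$. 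Setting $\Omega(\delta,\alpha):=\Omega_{\bar N}\subseteq\bar\Omega$ gives exactly the required set: for every $\omega\in\Omega(\delta,\alpha)$ and every $n\ge\bar N$ we have $x_n(\omega)\in[0,\delta)$, and $\mathbb P\{\Omega(\delta,\alpha)\}\ge p-\alpha$.

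\medskip

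There is no real obstacle here; the only points needing a word of care are the measurability of the $\Omega_N$ (immediate, since each $\{x_n<\delta\}$ is measurable and $\bar\Omega$ is the event that the limit is zero, itself expressible through countably many such events) and the fact that $\bigcup_N\Omega_N$ recovers \emph{all} of $\bar\Omega$ rather than just a.e. point of it — which is exactly the $\varepsilon$--$N$ characterization of convergence applied separately at each $\omega$. Everything else is the monotone continuity of a probability measure. If one wanted the conclusion for an arbitrary (possibly sign-changing) random sequence, the identical argument works with $[0,\delta)$ replaced by $(-\delta,\delta)$.
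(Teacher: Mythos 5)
Your proof is correct and is essentially the paper's argument: where the paper partitions $\bar\Omega$ into the disjoint sets $\Omega_i=\{\omega\in\bar\Omega: i\le N(\omega)<i+1\}$ and chooses finitely many of them by countable additivity, you work with the cumulative tail sets $\Omega_N=\bar\Omega\cap\bigcap_{n\ge N}\{x_n<\delta\}$ and invoke continuity of $\mathbb P$ from below — two phrasings of the same exhaustion of $\bar\Omega$. Your version has the minor cosmetic advantage of avoiding any discussion of the measurability of $N(\omega)$, but no new idea is involved.
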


Now we formulate a so called Two-Series Theorem (see e.g. \cite[Theorem 2, p. 386]{Shiryaev96}), which will be used later.
\begin{theorem}
\label{thm:2ser}
A sufficient condition for the convergence of the series $\sum \zeta_n$ of independent random variables $\zeta_n$, with probability 1, is that both series $\sum \mathbb E\zeta_n$ and $\sum \mathbb Var\zeta_n$ converges. If in addition, for some $c>0$, $\mathbb P[|\zeta_n|\le c]=1$, the condition is also necessary.
\end{theorem}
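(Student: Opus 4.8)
The plan is to handle the two halves of the statement separately: sufficiency is an $L^2$/maximal-inequality argument, while necessity requires a symmetrization trick and a reverse Kolmogorov inequality. Throughout write $\mathrm{Var}$ for the variance.

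\emph{Sufficiency.} First I would reduce to centered summands: set $\eta_n := \zeta_n - \mathbb E\zeta_n$, so that $\mathbb E\eta_n = 0$ and $\mathrm{Var}\,\eta_n = \mathrm{Var}\,\zeta_n$, and put $S_n := \sum_{k=1}^n \eta_k$. Since the $\eta_k$ are independent with mean zero, Kolmogorov's maximal inequality gives, for $m < n$,
\[
\mathbb P\Bigl(\max_{m < k \le n} |S_k - S_m| \ge \varepsilon\Bigr) \le \varepsilon^{-2}\sum_{k=m+1}^n \mathrm{Var}\,\zeta_k \le \varepsilon^{-2}\sum_{k=m+1}^\infty \mathrm{Var}\,\zeta_k .
\]
Because $\sum \mathrm{Var}\,\zeta_k < \infty$, the right-hand side tends to $0$ as $m\to\infty$, so $(S_n)$ is a.s.\ a Cauchy sequence and hence converges a.s.\ (equivalently, $(S_n)$ is an $L^2$-bounded martingale, so it converges a.s.\ by the martingale convergence theorem). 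Since $\sum \mathbb E\zeta_k$ converges by hypothesis, $\sum \zeta_k = \lim_n S_n + \sum \mathbb E\zeta_k$ converges a.s.

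\emph{Necessity, Step 1.} Now assume additionally $\mathbb P(|\zeta_n|\le c)=1$ for all $n$ and that $\sum\zeta_n$ converges a.s. I would symmetrize: on a product space take an independent copy $(\zeta_n')$ of $(\zeta_n)$ and set $\tilde\zeta_n := \zeta_n - \zeta_n'$. Then the $\tilde\zeta_n$ are independent, symmetric, satisfy $|\tilde\zeta_n|\le 2c$ a.s., $\mathbb E\tilde\zeta_n=0$, $\mathrm{Var}\,\tilde\zeta_n = 2\,\mathrm{Var}\,\zeta_n$, and $\sum\tilde\zeta_n$ converges a.s.\ as the difference of two a.s.-convergent series. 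For $\tilde S_n := \sum_{k=1}^n\tilde\zeta_k$ I would invoke the reverse Kolmogorov inequality for uniformly bounded mean-zero summands:
\[
\mathbb P\Bigl(\max_{k\le n}|\tilde S_k|\le \varepsilon\Bigr) \le \frac{(\varepsilon+2c)^2}{\sum_{k=1}^n \mathrm{Var}\,\tilde\zeta_k}.
\]
If $\sum_k \mathrm{Var}\,\tilde\zeta_k = \infty$, then letting $n\to\infty$ forces $\mathbb P(\sup_k|\tilde S_k|\le\varepsilon)=0$ for every $\varepsilon>0$, hence $\sup_k|\tilde S_k|=\infty$ a.s., contradicting the a.s.\ convergence of $\sum\tilde\zeta_k$. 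Therefore $\sum_k \mathrm{Var}\,\zeta_k = \tfrac12\sum_k \mathrm{Var}\,\tilde\zeta_k < \infty$.

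\emph{Necessity, Step 2, and the main obstacle.} With $\sum_k \mathrm{Var}\,\zeta_k<\infty$ now established, the sufficiency argument applied to $\eta_k=\zeta_k-\mathbb E\zeta_k$ shows that $\sum_k(\zeta_k-\mathbb E\zeta_k)$ converges a.s.; subtracting this from the assumed a.s.\ convergence of $\sum_k\zeta_k$ gives convergence of $\sum_k \mathbb E\zeta_k$, completing the proof. I expect the main obstacle to be the necessity direction — specifically stating and correctly applying the reverse Kolmogorov inequality for uniformly bounded summands, together with the bookkeeping in the symmetrization step — whereas the sufficiency half is a routine application of the maximal inequality. One could instead derive the result from Kolmogorov's three-series theorem, but since the statement isolates the two-series form it seems cleanest to give the self-contained maximal-inequality proof sketched here.
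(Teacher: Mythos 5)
Your proof is correct: centering and applying Kolmogorov's maximal inequality to the partial sums of $\zeta_n-\mathbb E\zeta_n$ gives sufficiency, and symmetrization combined with the lower (reverse) Kolmogorov inequality for uniformly bounded mean-zero summands gives necessity, after which convergence of $\sum\mathbb E\zeta_n$ follows by subtraction. The paper itself offers no proof of this statement --- it is quoted as the classical Two-Series Theorem with a citation to Shiryaev (Theorem 2, p.~386) --- and the argument in that reference is essentially the one you reproduce, so your write-up is simply a faithful self-contained version of the cited textbook proof.
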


\begin{lemma}
\label{lem:infty1}
Let Assumption \ref{as:chibound} hold, $\mathbb E\xi_n=\mu_n$,  $\mathbb E\xi_n^2=1$, $\sigma=(\sigma_n)\notin {\bf \ell}_2$, 
$\mu^-=(\mu_n^-)\in {\bf \ell}_2$. Then, a.s.,
\begin{equation}
\label{eq:+infty}
\limsup_{n\to \infty} \sum_{k=1}^n \sigma_k \xi_{k+1}=\infty.
\end{equation}
\end{lemma}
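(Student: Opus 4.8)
The plan is to decompose the partial sums into a martingale-type piece with square-summable increments corrections and a genuinely divergent piece, then combine a pathwise convergence argument (via the Two-Series Theorem) with a Central Limit Theorem argument for the remaining ``centered'' part. Write $S_n:=\sum_{k=1}^{n}\sigma_k\xi_{k+1}$ and split $\xi_{k+1}=(\xi_{k+1}-\mu_{k+1})+\mu_{k+1}$, so that $S_n=M_n+\Gamma_n$ with $M_n:=\sum_{k=1}^n\sigma_k(\xi_{k+1}-\mu_{k+1})$ and $\Gamma_n:=\sum_{k=1}^n\sigma_k\mu_{k+1}$. Further split $\mu_{k+1}=\mu_{k+1}^+ + \mu_{k+1}^-$; the contribution $\sum\sigma_k\mu_{k+1}^-$ is, by Cauchy--Schwarz, bounded in absolute value by $\bigl(\sum\sigma_k^2\mu_{k+1}^{-2}\bigr)^{1/2}\bigl(\sum \mathbf 1\bigr)^{1/2}$ — this does not immediately converge, so instead I would argue more carefully: since $|\sigma_k|\le$ does not hold, the right move is to use that $\sigma_n\xi_{n+1}\to 0$ a.s.\ is \emph{not} assumed here, but $\mu^-\in\ell_2$ together with boundedness $|\sigma_k|$ need a bound. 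Actually the clean route: the negative part $\sum_k \sigma_k \mu_{k+1}^-$ need not converge, but it is $\le 0$, so it can only help the $\limsup$; we only need to control it from below, and $\sum_k \sigma_k|\mu_{k+1}^-| \le \bigl(\sum_k\sigma_k^2\bigr)^{1/2}\|\mu^-\|_{\ell_2}$ — wait, that diverges. The correct observation is that we do not need $\Gamma_n$ to converge at all; we only need $\limsup_n(M_n+\Gamma_n)=\infty$, and since $\mu^+_{k+1}\ge 0$ gives $\Gamma_n\ge \sum_k\sigma_k\mu_{k+1}^- \ge -\|\sigma\|_\infty\cdot$(doesn't work either).

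So the genuine engine must be $M_n$. Here is the structure I would actually follow. Apply the result from \cite{AR1} quoted in the introduction, or reprove it: for the sequence $\eta_k:=\sigma_k(\xi_{k+1}-\mu_{k+1})$ of independent, mean-zero, uniformly bounded (by $2\sigma_k$, but $\sigma_k$ itself need not be bounded — however $\sigma_n\xi_{n+1}\to0$ forces $\sigma_n\to0$ along the support, hence $\sigma_n$ \emph{is} bounded) random variables with $\sum_k \mathrm{Var}(\eta_k)=\sum_k\sigma_k^2\,\mathrm{Var}(\xi_{k+1})$, and since $\mathbb E\xi_{k+1}^2=1$ with $\mu_{k+1}\in\ell_2$ (its negative part is, and positive part: we may WLOG also bound it since $|\mu_{k+1}|\le 1$) we get $\mathrm{Var}(\xi_{k+1})=1-\mu_{k+1}^2\to 1$, so $\sum_k\mathrm{Var}(\eta_k)=\infty$ because $\sigma\notin\ell_2$. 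For such a martingale with divergent conditional variance and bounded increments, the a.s.\ oscillation is full: $\limsup M_n=+\infty$ and $\liminf M_n=-\infty$ a.s. This is exactly the kind of statement proved via the CLT (Lindeberg) applied to the normalized sums $M_n/(\sum_{k\le n}\sigma_k^2)^{1/2}$, as flagged in the introduction with the reference to \cite[p.~328--333]{Shiryaev96}: the normalized sums are asymptotically standard normal, so for any fixed level $A$ the probability that $M_n$ exceeds $A$ does not vanish, and a Borel--Cantelli / Kolmogorov zero--one law argument upgrades this to $\limsup M_n=\infty$ a.s.

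Finally, combine the pieces: $S_n = M_n + \Gamma_n$ where $\Gamma_n=\Gamma_n^{+}+\Gamma_n^{-}$ with $\Gamma_n^{+}:=\sum_{k\le n}\sigma_k\mu_{k+1}^{+}\ge 0$ nondecreasing and $\Gamma_n^{-}:=\sum_{k\le n}\sigma_k\mu_{k+1}^{-}$. Since $\mu^-\in\ell_2$ and $\sigma$ is bounded (by $\sigma_\ast$, say), we have $\sum_k |\sigma_k\mu_{k+1}^-|\le\sigma_\ast\sum_k|\mu_{k+1}^-|$, which still need not converge; so instead I bound $\Gamma_n^-\ge -\bigl(\sum_{k\le n}\sigma_k^2\bigr)^{1/2}\|\mu^-\|_{\ell_2}$ by Cauchy--Schwarz, i.e. $\Gamma_n^-/(\sum_{k\le n}\sigma_k^2)^{1/2}$ is bounded. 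Hence $S_n/(\sum_{k\le n}\sigma_k^2)^{1/2} = M_n/(\sum_{k\le n}\sigma_k^2)^{1/2} + \Gamma_n^+/(\cdots) + O(1)$, and the first term is asymptotically $N(0,1)$ while the second is $\ge 0$; therefore $\mathbb P\{S_n > A\ \text{i.o.}\}=1$ for every $A$, giving \eqref{eq:+infty}.

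The main obstacle I anticipate is the rigorous passage from ``the CLT gives $\mathbb P\{S_n>A\}\not\to 0$'' to ``$\limsup S_n=\infty$ a.s.'': a single-$n$ statement does not by itself give an infinitely-often statement. The standard fix is to pick a sparse subsequence $n_1<n_2<\dots$ along which the blocks $S_{n_{j+1}}-S_{n_j}$ are independent and each, by the CLT applied to the block, exceeds $A$ with probability bounded below by some $q>0$; then Borel--Cantelli (second, independent form) forces infinitely many blocks with a large increment, and controlling the accumulated ``drift'' from earlier blocks (again using $\Gamma^+\ge 0$ and the Cauchy--Schwarz bound on $\Gamma^-$, plus a maximal inequality for the martingale $M$) yields $S_{n_j}\to\infty$ along a further subsequence. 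Handling the negative mean part $\mu^-$ cleanly within this block decomposition — making sure its cumulative effect is $o$ of the block scale — is the delicate bookkeeping step, and is exactly where the hypothesis $\mu^-\in\ell_2$ is used. The remaining details (verifying Lindeberg's condition for the bounded triangular array, the zero--one law) are routine and I would defer them, consistently with the paper's statement that this proof goes to the Appendix.
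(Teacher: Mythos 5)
Your proposal is essentially the paper's own argument: center $\xi_{k+1}$ at $\mu_{k+1}$, apply the Lindeberg CLT together with the oscillation result of \cite{AR1} (Lemma \ref{lem:sqrtn}) to the normalized centered sums $M_n/D_n$ with $D_n^2=\sum_{k\le n}\sigma_k^2\to\infty$, drop the nonnegative drift coming from $\mu^+$, and bound the negative drift via Cauchy--Schwarz, $\sum_{k\le n}\sigma_k[-\mu_{k+1}^-]\le D_n\|\mu^-\|_{\ell_2}$, so that $\limsup_n S_n/D_n=\infty$ a.s.\ and hence $\limsup_n S_n=\infty$ a.s. The ``upgrade to almost sure'' step you worry about is exactly what the cited \cite{AR1} lemma provides, so your sketched block/Borel--Cantelli construction is not needed.
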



\section{Stochastic equations}
\label{sec:stoch}

Consider the equation
\begin{equation}
\label{eq:main}
x_{n+1}=\max\{f(x_n)+\sigma_n\xi_{n+1}, \, 0\},  \quad x_0>0, \quad n=0, 1, \dots,
\end{equation}
where $f$ is continuous and satisfies Assumptions \ref{as:g12}, \ref{as:g34} and \ref{as:fxinf}, $\sigma_n>0$, and for the random sequence  $(\xi_n)_{n\in \mathbb N}$  Assumption \ref{as:chibound} holds.  

Note that under Assumption \ref{as:chibound}, when the support of all $\xi_n$ is $[-1, 1]$, the relation
\begin{equation}
\label{cond:noisetozero}
\lim_{n\to \infty}\sigma_n\xi_{n+1}=0, 
\end{equation}
holds on all $\Omega$ whenever   
\begin{equation}
\label{cond:sigmatozero}
\lim_{n\to \infty}\sigma_n=0 .
\end{equation}
Consider convergence of the solution $x$ of \eqref{eq:main} to the unique positive equilibrium point $K$ of $f$.  Theorem \ref{thm:detmain} implies that,  under Assumptions  \ref{as:g12}-\ref{as:fxinf}  and condition \eqref{cond:sigmatozero}, there are only two possibilities for the limiting behavior of solution:  it either tends to $K$, or tends to zero. Our main goal is to derive conditions eliminating the last possibility or at least to estimate its probability.
In other words, we want either  to show that 
\begin{equation}
\label{cond:prob0}
\mathbb P\{\omega\in \Omega: \lim_{n\to \infty} x_n(\omega)=0\}=0
\end{equation}
or estimate the probability $\mathbb P\{\omega\in \Omega: \lim\limits_{n\to \infty} x_n(\omega)=0\}$.

Since we are mostly interested in the behavior of $f$ in the right neighborhood zero, 
instead of using Assumptions \ref{as:g12}, \ref{as:g34} and \ref{as:fxinf} which deal with the global behavior of $f$, 
we only assume that condition \eqref{cond:xnotzero} is fulfilled, i.e.
$f:\RR^+\to \RR^+$ is continuous, $f(0)=0$, $f(K)=K$, $f(x)>x$ for all $x\in (0, K)$.

In Section \ref{subsec:notl2} we prove that when $\sigma\notin {\bf \ell}_2$, condition \eqref {cond:prob0} holds without any extra 
assumption on $f$.  The case $\sigma\in {\bf \ell}_2$ is discussed in Sections~\ref{subsec:consigmaF} and \ref{subsec:gencond}.

In Section \ref{subsec:consigmaF} we prove \eqref{cond:prob0} where there exists  a certain  connection between the noise intensity 
$\sigma_n$ and $F(x)=f(x)-x$. Our result holds in particular for the case when 
$\displaystyle
\liminf_{x \to 0^+} \frac {f(x)}{x}>1, 
$
which is quite common in population modelling.

In Section \ref{subsec:gencond} we suppose  that the distributions of $\xi_n$ are symmetrical. Assuming only \eqref{cond:xnotzero}, we prove that 
\[
\mathbb P\{\omega\in \Omega: \lim_{n\to \infty} x_n(\omega)=0\}\le \frac 12.
\]
In Section \ref{subsec:genstochthm} we summarize the obtained results on the equilibrium points of equation \eqref{eq:main}.


\subsection{Case $\sigma\notin {\bf \ell}_2$. }
\label{subsec:notl2}
In this section we do not apply any extra assumptions on $\lim\limits_{x\to 0^+}f(x)$.

\begin{lemma}
\label{lem:+infty}
Assume that $f$ satisfies \eqref{cond:xnotzero}  and  condition \eqref{eq:+infty} holds a.s.
Then \eqref{cond:prob0} holds for any solution $x_n$ to equation \eqref{eq:main} with the initial value $x_0>0$. 
\end{lemma}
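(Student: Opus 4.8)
The plan is to argue by contradiction: suppose $p := \mathbb P\{\omega : \lim_{n\to\infty} x_n(\omega)=0\} > 0$ and derive a contradiction with the almost-sure relation \eqref{eq:+infty}. First I would fix a small $\delta \in (0,K)$, and apply Lemma~\ref{lem:lim0} with, say, $\alpha = p/2$, to obtain $\bar N = \bar N(\delta, p/2)$ and a set $\Omega(\delta, p/2) \subseteq \bar\Omega$ with $\mathbb P\{\Omega(\delta, p/2)\} \ge p/2 > 0$ on which $x_n(\omega) \in [0,\delta)$ for all $n \ge \bar N$. Thus on this set of positive probability the solution is trapped in $[0,\delta)$ eventually.

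Next I would use the lower bound \eqref{ineq:stoch}, namely $x_{n+1} \ge f(x_n) + \sigma_n\xi_{n+1}$, together with $f(x) > x$ on $(0,K)$ from \eqref{cond:xnotzero}, to get $x_{n+1} \ge x_n + \sigma_n \xi_{n+1}$ for all $n \ge \bar N$ as long as $x_n \in (0,\delta)$ (when $x_n = 0$ one has $x_{n+1} \ge \max\{\sigma_n\xi_{n+1},0\} \ge \sigma_n\xi_{n+1}$, so the same inequality holds). Iterating from $\bar N$, for $\omega \in \Omega(\delta, p/2)$ and any $m > \bar N$,
\[
x_m(\omega) \ge x_{\bar N}(\omega) + \sum_{k=\bar N}^{m-1} \sigma_k \xi_{k+1}(\omega) \ge \sum_{k=\bar N}^{m-1} \sigma_k \xi_{k+1}(\omega).
\]
Since \eqref{eq:+infty} holds a.s., and removing the finitely many initial terms $\sum_{k=1}^{\bar N - 1}\sigma_k\xi_{k+1}$ does not affect the $\limsup$, we have $\limsup_{m\to\infty} \sum_{k=\bar N}^{m-1}\sigma_k\xi_{k+1} = \infty$ a.s., hence on $\Omega(\delta,p/2)$ up to a null set. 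Therefore $\limsup_{m\to\infty} x_m(\omega) = \infty$ on a subset of $\Omega(\delta,p/2)$ of probability $\ge p/2 > 0$, contradicting the fact that $x_m(\omega) < \delta$ for all $m \ge \bar N$ on $\Omega(\delta, p/2)$. Hence $p = 0$, which is exactly \eqref{cond:prob0}.

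The main subtlety — not really an obstacle but the point requiring care — is the bookkeeping with the exceptional null sets: \eqref{eq:+infty} holds off a null set $N_0$, and Lemma~\ref{lem:lim0} gives the trapping set $\Omega(\delta,p/2)$; one must intersect with $\Omega \setminus N_0$, which still has probability $\ge p/2$, and only there invoke the $\limsup = \infty$ conclusion. One should also double-check the edge case $x_n = 0$ in the iteration, as noted above, so that the telescoping lower bound is valid for every $n \ge \bar N$ regardless of whether the $\max$ in \eqref{eq:main} is active; this is immediate from \eqref{ineq:stoch} and $f(0)=0$. No further assumption on $\lim_{x\to 0^+} f(x)$ is needed, since $f(x) > x$ on $(0,K)$ already forces the solution's increments to dominate the partial sums of the noise.
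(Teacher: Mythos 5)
Your proposal is correct and follows essentially the same route as the paper's proof: argue by contradiction, use Lemma~\ref{lem:lim0} to trap the solution in $[0,\delta)$ after a nonrandom time on a set of probability at least $p/2$, iterate $x_{n+1}\ge f(x_n)+\sigma_n\xi_{n+1}\ge x_n+\sigma_n\xi_{n+1}$ to bound $x_m$ below by the partial sums of the noise, and invoke \eqref{eq:+infty} to contradict $x_m<\delta$. Your explicit treatment of the tail sums, the null-set bookkeeping, and the $x_n=0$ edge case only spell out details the paper leaves implicit.
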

\begin{proof}
Suppose the opposite: there exists $\Omega_p\subseteq \Omega$, $\mathbb P\{\Omega_p\}=p>0$ such that $\lim\limits_{n\to \infty} 
x_n(\omega)=0$  for $\omega\in \Omega_p$. Fix some $\delta \in (0, K).$ 
By Lemma \ref{lem:lim0} for any $\delta\in (0, K) $, there exists a nonrandom $N_\delta\in \mathbb N$ and $\Omega_\delta\subseteq 
\Omega_p$ with   $\mathbb P\{\Omega_\delta\}\ge p/2$,  such that, for all $n\ge N_\delta$, $\omega \in \Omega_\delta$
\[
x_n\in [0, \delta).
\]
By \eqref{eq:+infty}, for each $\omega\in \Omega_\delta$ there exists $n_\delta(\omega)\in \mathbb N$, $n_\delta\ge N_\delta$,  such that, on $\Omega_\delta$,
\[
\sum_{k=N_\delta}^{n_\delta(\omega)  } \sigma_k \xi_{k+1}(\omega)>\delta.
\]
For each $k\ge N_\delta$ and $\omega \in \Omega_\delta$, we have $x_k\in [0, \delta)\subset [0, K)$, so, by \eqref{ineq:stoch}, 
\begin{equation*}
x_{k+1}\ge f(x_k)+\sigma_k\xi_{k+1}\ge x_k+\sigma_k\xi_{k+1}\ge f(x_{k-1})+\sigma_{k-1}\xi_{k} +\sigma_k\xi_{k+1}.
\end{equation*}
Applying induction, we show that, on $\Omega_\delta$,
\[
x_{k+1}\ge \sum_{i= N_\delta}^{k}\sigma_i\xi_{i+1},
\]
which implies that, on $\Omega_\delta$,
\[
x_{n_\delta(\omega)+1}\ge \sum_{i=N_\delta}^{n_\delta(\omega)} \sigma_i \xi_{i+1}(\omega)>\delta.
\]
Obtained contradiction proves \eqref{cond:prob0}. 
\end{proof}

The following lemma is a corollary of Lemmata \ref{lem:+infty} and \ref{lem:infty1}.
\begin{lemma}
\label{lem:infty2}
Suppose that condition \eqref{cond:xnotzero} holds, 
and the sequence $(\xi_n)_{n\in \mathbb N}$ satisfies the assumptions of 
Lemma~\ref{lem:infty1}. Then \eqref{cond:prob0} holds for any solution $x_n$ to equation \eqref{eq:main} with the 
initial value $x_0>0$. 
\end{lemma}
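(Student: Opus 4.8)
The plan is to derive Lemma~\ref{lem:infty2} by simply chaining together the two results it is a corollary of. The hypotheses here are exactly condition \eqref{cond:xnotzero} on $f$ together with the full list of assumptions of Lemma~\ref{lem:infty1}: Assumption~\ref{as:chibound}, $\mathbb E\xi_n=\mu_n$, $\mathbb E\xi_n^2=1$, $\sigma=(\sigma_n)\notin{\bf \ell}_2$, and $\mu^-=(\mu_n^-)\in{\bf \ell}_2$. So the first step is to invoke Lemma~\ref{lem:infty1} to conclude that \eqref{eq:+infty} holds a.s., i.e.
\[
\limsup_{n\to\infty}\sum_{k=1}^n\sigma_k\xi_{k+1}=\infty \quad \text{a.s.}
\]

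Once \eqref{eq:+infty} is known to hold almost surely, the hypotheses of Lemma~\ref{lem:+infty} are met: $f$ satisfies \eqref{cond:xnotzero} by assumption, and \eqref{eq:+infty} holds a.s.\ by the previous step. Therefore Lemma~\ref{lem:+infty} yields \eqref{cond:prob0}, namely $\mathbb P\{\omega\in\Omega:\lim_{n\to\infty}x_n(\omega)=0\}=0$, for any solution $x_n$ of \eqref{eq:main} with $x_0>0$. That is precisely the conclusion to be proved, so nothing further is needed.

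There is essentially no obstacle here — the statement is explicitly flagged as a corollary, and the content is the bookkeeping observation that the ${\bf \ell}_2$-type summability conditions on $\sigma$ and $\mu^-$ (which are the natural input one can check directly for a given model) are enough, via Lemma~\ref{lem:infty1}, to guarantee the abstract divergence hypothesis \eqref{eq:+infty} that Lemma~\ref{lem:+infty} requires. The only thing worth a sentence in the write-up is to note that Lemma~\ref{lem:infty1} is itself where the real work sits (it rests on the Two-Series Theorem, Theorem~\ref{thm:2ser}, and on a Central Limit Theorem argument for the uniformly bounded increments $\sigma_k\xi_{k+1}$), and that its proof is deferred to the Appendix; in the main text the proof of Lemma~\ref{lem:infty2} is the two-line composition described above.
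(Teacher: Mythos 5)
Your proposal is correct and is exactly the paper's argument: the authors state that Lemma~\ref{lem:infty2} is a corollary of Lemmata~\ref{lem:+infty} and \ref{lem:infty1}, i.e.\ Lemma~\ref{lem:infty1} supplies \eqref{eq:+infty} a.s.\ under the stated moment and $\ell_2$ conditions, and Lemma~\ref{lem:+infty} then converts \eqref{cond:xnotzero} plus \eqref{eq:+infty} into \eqref{cond:prob0}. Nothing is missing; the substantive work indeed lives in the Appendix proof of Lemma~\ref{lem:infty1}, as you note.
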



\subsection{Case $\sigma\in {\bf \ell}_2$.}
\subsubsection{ Connection between noise intensity and $f$ which guarantees \eqref{cond:prob0}.}
\label{subsec:consigmaF}

\begin{lemma}
\label{lem:sigmaF}
Let Assumption \ref{as:chibound} and condition \eqref{cond:xnotzero} hold, $f$ be a nondecreasing function on 
$(0, \delta)$, for some $\delta \in (0,K)$, $(\sigma_n)_{n\in \mathbb N}$ be a non-increasing sequence and $\sigma\in {\bf \ell}_2$. Let $F$ be defined as in \eqref{def:F},
and $\xi_n$ either be identicaly distributed or satisfy condition~\eqref{eq:estprob} of Remark \ref{rem:noniid}.
Suppose also that  there exist $M\in (0, \infty)$ and $L\in {\mathbb N}$ such that  for all $n\ge L$ 
\begin{equation}
\label{cond:sigmaF}
\sigma_{n+1}\le F\bigl(M\sigma_{n}  \bigr).
\end{equation}
 Then any solution $x_n$ of equation \eqref{eq:main} with  an arbitrary   initial value $x_0>0$  satisfies \eqref{cond:prob0}.
\end{lemma}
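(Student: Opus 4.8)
\textbf{Proof proposal for Lemma \ref{lem:sigmaF}.}

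The plan is to argue by contradiction: assume that $\mathbb P\{\omega: \lim_{n\to\infty} x_n(\omega)=0\}=p>0$. By Lemma \ref{lem:lim0}, for a fixed $\delta_0\in(0,\delta)$ there is a nonrandom $N\in\mathbb N$ and a set $\Omega_0$ with $\mathbb P\{\Omega_0\}\ge p/2$ such that $x_n(\omega)\in[0,\delta_0)$ for all $n\ge N$ and all $\omega\in\Omega_0$. The key idea is that while the solution stays below $\delta$, where $f$ is nondecreasing and $f(x)>x$, the inequality \eqref{ineq:stoch} gives $x_{n+1}\ge f(x_n)+\sigma_n\xi_{n+1}$, and one can use monotonicity of $f$ together with \eqref{cond:sigmaF} to show that once $x_n$ is comparable to $\sigma_n$ (say $x_n\ge M\sigma_n$ for the constant $M$ in \eqref{cond:sigmaF}), a suitably large run of values $\xi_{n+1},\dots,\xi_{n+J}$ near $1$ forces $x_n$ up to a level that cannot decay back to zero. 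The ingredient producing such a run is Lemma \ref{lem:kpos} (or its Remark \ref{rem:noniid} version under \eqref{eq:estprob}): with probability one there is a (random) index $N_J\ge N$ with $\xi_n\in[1-\varepsilon,1]$ for $n=N_J+1,\dots,N_J+J$.

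First I would set up the comparison carefully. Pick $\varepsilon\in(0,1)$ small and an integer $J$ large; on the event that $\xi_{N_J+1},\dots,\xi_{N_J+J}$ all lie in $[1-\varepsilon,1]$, starting from $x_{N_J}\ge 0$ and using $f\ge 0$ we get $x_{N_J+1}\ge (1-\varepsilon)\sigma_{N_J}$. Since $(\sigma_n)$ is non-increasing, after $j$ such steps, using $f(x)\ge x$ on $(0,\delta)$ whenever the iterate is still in that range, we obtain $x_{N_J+j}\ge j(1-\varepsilon)\sigma_{N_J+j}$ (the non-increasing property of $\sigma$ lets us bound each added term below by $(1-\varepsilon)\sigma_{N_J+j}$). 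Choosing $J$ so that $J(1-\varepsilon)\ge M$, we reach $x_{N_J+J}\ge M\sigma_{N_J+J}$. From that point the recursion $x_{n+1}\ge f(x_n)+\sigma_n\xi_{n+1}$ combined with monotonicity of $f$ and condition \eqref{cond:sigmaF}, $\sigma_{n+1}\le F(M\sigma_n)=f(M\sigma_n)-M\sigma_n$, is designed to propagate the bound $x_n\ge M\sigma_n$ forward for all subsequent $n$: if $x_n\ge M\sigma_n$ then $f(x_n)\ge f(M\sigma_n)\ge M\sigma_n+\sigma_{n+1}$, so even with the worst-case noise $\sigma_n\xi_{n+1}\ge -\sigma_n$ one needs to check that $x_{n+1}\ge M\sigma_{n+1}$ still follows; I would verify the precise arithmetic here, possibly after enlarging $M$ or shrinking the range, so that the "loss" $-\sigma_n$ is absorbed by the gain $\sigma_{n+1}$ coming from $F(M\sigma_n)$ together with the slack $f(M\sigma_n)-M\sigma_n-\sigma_{n+1}\ge 0$.

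Granting the propagation step, on the event described (which, by Lemma \ref{lem:kpos} or Remark \ref{rem:noniid}, occurs with probability one and for $N_J$ arbitrarily large, hence intersects $\Omega_0$ in a set of probability $\ge p/2>0$) we would have $x_n\ge M\sigma_n>0$ for all large $n$ on a positive-probability subset of $\Omega_0$ — but $\Omega_0\subseteq\{\lim x_n=0\}$, and the only way $x_n\ge M\sigma_n$ is compatible with $x_n\to 0$ is via $\sigma_n\to 0$, which is allowed; so the contradiction must instead come from pushing the iterate \emph{out of} the shrinking band, i.e.\ I would actually iterate the run of good noises to drive $x_n$ above a \emph{fixed} threshold $\delta_0$, contradicting $x_n\in[0,\delta_0)$ on $\Omega_0$. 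The cleanest route: choose $\delta_0$ first, then note that while $x_n<\delta_0<\delta$ and $\xi_{n+1}\in[1-\varepsilon,1]$ we have $x_{n+1}\ge x_n+(1-\varepsilon)\sigma_n$ only if $\sigma_n$ is bounded below along the run — which fails in general — so one really does need the self-sustaining bound $x_n\ge M\sigma_n$ to first be established, and then a \emph{second} mechanism (a further run of good noises, or the fact that $f(x)>x$ gives geometric-type growth away from $0$ once $x$ is not infinitesimally small relative to $\sigma$) to escape the band; combining these yields $x_n\ge\delta_0$ for some $n\ge N$ on a positive-probability subset of $\Omega_0$, the desired contradiction, so \eqref{cond:prob0} holds.

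\textbf{Main obstacle.} The delicate point is the propagation step: showing that $x_n\ge M\sigma_n$ is preserved under the one-step inequality $x_{n+1}\ge f(x_n)+\sigma_n\xi_{n+1}$ when $\xi_{n+1}$ may be as negative as $-1$. This is exactly where condition \eqref{cond:sigmaF} and the monotonicity of both $f$ and $(\sigma_n)$ must be used in tandem, and getting the constants to close (which $M$, how large $J$, how small $\varepsilon$) is the crux; everything else is a fairly standard "good-run of noises plus Lemma \ref{lem:lim0}" contradiction argument.
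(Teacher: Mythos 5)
Your setup (contradiction via Lemma \ref{lem:lim0}, a run of noises in $[1-\varepsilon,1]$ from Lemma \ref{lem:kpos} or Remark \ref{rem:noniid}, and the accumulation $x_{N_J+J}\ge (1-\varepsilon)\sum_{i=0}^{J-1}\sigma_{N_J+i}\ge M\sigma_{N_J+J-1}$ with $J\approx M/(1-\varepsilon)$) matches the paper. But the argument has a genuine gap exactly where you flag your ``main obstacle'': you choose the moving bound $x_n\ge M\sigma_n$ as the quantity to propagate, correctly observe that even if it propagates it yields no contradiction (since $\sigma_n\to0$), and then appeal to an unconstructed ``second mechanism'' to escape the band $[0,\delta_0)$. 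That mechanism is not available under the hypotheses of this lemma: here only \eqref{cond:xnotzero} is assumed, not \eqref{cond:fkappa_1}, so there is no uniform expansion of $f$ near $0$, and with $\sigma\in\ell_2$ one cannot in general drive the iterate from a level comparable to $\sigma_n$ up to a fixed threshold $\delta_0$. So as written the proof does not close.

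The missing idea, which is how the paper finishes, is to \emph{freeze} the lower bound at the level reached at the end of the good run rather than letting it shrink with $\sigma_n$. Set $c:=M\sigma_{N+J-1}$ (a fixed positive number for each $\omega$, with $N=N_J$). Once $x_{N+J}\ge c$ and all iterates stay in $(0,\delta)$ on $\tilde\Omega$, monotonicity of $f$ on $(0,\delta)$, the worst-case noise bound from \eqref{ineq:stoch}, condition \eqref{cond:sigmaF} applied at the single index $N+J-1$, and the monotonicity of $(\sigma_n)$ give, inductively for every $k\ge1$,
\begin{equation*}
x_{N+J+k}\ \ge\ f\bigl(M\sigma_{N+J-1}\bigr)-\sigma_{N+J+k-1}
\ =\ F\bigl(M\sigma_{N+J-1}\bigr)+M\sigma_{N+J-1}-\sigma_{N+J+k-1}
\ \ge\ \sigma_{N+J}+c-\sigma_{N+J+k-1}\ \ge\ c,
\end{equation*}
so the solution is bounded below by the fixed positive constant $c$ for all $n\ge N+J$ on $\tilde\Omega$, directly contradicting $x_n\to0$ there — no escape from the band and no extra growth mechanism is needed. (Incidentally, your moving-bound propagation $x_n\ge M\sigma_n$ only closes when $M\ge1$, whereas the lemma allows any $M>0$; the frozen-level argument has no such restriction.) With this replacement your proof becomes essentially the paper's.
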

\begin{proof}
Assume that
\[
\mathbb P [\overline \Omega]=p>0, \quad \overline \Omega:=\{\omega\in \Omega: \lim_{n\to \infty} x_n=0\}.
\]
Then, there is $\tilde \Omega\subseteq \overline \Omega$ and $\overline N\in \mathbb N$ such that 
$\mathbb P [\tilde \Omega]=p/2$ and, on $\tilde \Omega$ for all $n\ge \overline N$,
\[
x_n\in (0, \delta).
\]
Let us fix some $\varepsilon\in (0, 1)$ and denote 
\[
J=\left[\frac M{1-\varepsilon} \right].
\]
By Lemma \ref{lem:kpos}, with probability 1 there is an $N_J>\max\{\overline N, L\}$ 
such that $\xi_{i}\in (1-\varepsilon, 1)$ for all $i=N_J+1, N_J+2, \dots, N_J+J$.   
Therefore, on $\tilde \Omega$  all $x_n\in (0, \delta)$ for $n\ge N$ and $\lim\limits_{n\to \infty} x_n=0$. 

Thus, on $\tilde \Omega$,
\begin{equation*}
\begin{split}
x_{N+1}\ge &f(x_{N})+\sigma_{N}\xi_{N+1}\ge (1-\varepsilon)\sigma_N, \\
x_{N+2}\ge &f(x_{N+1})+\sigma_{N+1}\xi_{N+2}\ge x_{N+1}+(1-\varepsilon)\sigma_{N+1}> (1-\varepsilon)\sigma_N+ (1-\varepsilon)\sigma_{N+1}, \\ 
\text{and, inductively,} &\\
x_{N+J}=&f(x_{N+J-1})+\sigma_{N+J-1}\xi_{N+J}\ge x_{N+J-1}+(1-\varepsilon)\sigma_{N+J-1}\ge (1-\varepsilon)\sum_{i=0}^{i=J-1}\sigma_{N+i}.
\end{split}
\end{equation*}
By monotonicity of $\sigma_n$ and by the definition of $J$, we have
\[
(1-\varepsilon)\sum_{i=0}^{i=J-1}\sigma_{N+i}\ge (1-\varepsilon)J\sigma_{N+J-1}\ge M \sigma_{N+J-1}.
\]
Then, by conditions  \eqref{cond:xnotzero} and \eqref{cond:sigmaF}, we have, on $\tilde \Omega$,
\begin{equation*}
\begin{split}
x_{N+J+1}&\ge  f(x_{N+J})+\sigma_{N+J}\xi_{N+J+1}\ge  f(M \sigma_{N+J-1}  )-\sigma_{N+J} \\
& =  F(M \sigma_{N+J-1})+M \sigma_{N+J-1}-\sigma_{N+J}  \ge M \sigma_{N+J-1};\\ 
\text{and, inductively, for each} & \,\,   k\in \mathbf N,  \\
x_{N+J+k}&\ge  f(M \sigma_{N+J-1})-\sigma_{N+J+k}
=  F(M \sigma_{N+J-1})+M \sigma_{N+J-1}-\sigma_{N+J+k} \\
& \geq \sigma_{N+J}+M \sigma_{N+J-1}- \sigma_{N+J+k} \ge M \sigma_{N+J-1}.
\end{split}
\end{equation*}
Obtained contradiction  proves the result. 
\end{proof}

\begin{remark}
We can get rid of the assumption of monotonicity of $\sigma_n$ assuming instead, that 
for some $\varepsilon >0$, $J\in \mathbb N$, $N\in \mathbb N$ and each $k\in \mathbb N$, 
\[
\sigma_{N+J+k}\le F\left((1-\varepsilon)  \sum_{i=0}^{i=J-1}\sigma_{N+i}\right).
\]
\end{remark}

\medskip

\subsubsection{Case $\liminf\limits_{x\to 0+}\frac{f(x)}x>1$.}

In this subsection we assume that in addition to  \eqref{cond:xnotzero}  the following condition  holds:
\begin{equation}
\label{cond:fkappa_1}
\liminf_{x\to 0+}\frac{f(x)}x>1.
\end{equation}

Note that condition \eqref{cond:fkappa_1}  implies that 
\begin{equation}
\label{cond:fkappa_2}
\text{ there exist $\delta, \kappa >0$ such that  $f(x)\ge (1+\kappa)x$ for $x\in (0, \delta)$}.
\end{equation}

\begin{lemma}
\label{lem:1+kappa}
Let Assumption \ref{as:chibound}, conditions \eqref{cond:xnotzero} and \eqref{cond:fkappa_1}  hold, $(\sigma_n)_{n\in \mathbb N}$ be a non-increasing sequence and $\sigma\in {\bf \ell}_2$.  Then for any 
solution $x_n$ of equation \eqref{eq:main} condition \eqref{cond:prob0} holds.
\end{lemma}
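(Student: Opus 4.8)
The plan is to reduce Lemma~\ref{lem:1+kappa} to the already-proved Lemma~\ref{lem:sigmaF} by checking that condition~\eqref{cond:fkappa_2} forces the key inequality~\eqref{cond:sigmaF} to hold for a suitable choice of the constant $M$. First I would record the elementary consequences of our hypotheses: by~\eqref{cond:fkappa_2} there are $\delta, \kappa>0$ with $f(x)\ge(1+\kappa)x$ on $(0,\delta)$, so $F(x)=f(x)-x\ge\kappa x$ on $(0,\delta)$; moreover $f$ is automatically nondecreasing near $0$ is \emph{not} implied, so I would instead observe that Lemma~\ref{lem:sigmaF} uses monotonicity of $f$ only to pass from $x_n\in(0,\delta)$ with $\xi$'s near $1$ to the lower bound $x_{N+1}\ge(1-\varepsilon)\sigma_N$; that step actually only needs $f\ge 0$, so I would either cite the slightly weaker form of the argument or simply redo the short induction directly. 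Since $\sigma\in{\bf\ell}_2$ and $(\sigma_n)$ is non-increasing, $\sigma_n\to 0$, hence there is $L\in\mathbb N$ with $M\sigma_n<\delta$ for all $n\ge L$, where $M:=1/\kappa$ (say).

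With these preliminaries, the core claim is that~\eqref{cond:sigmaF} holds for $n\ge L$: for such $n$ we have $M\sigma_n\in(0,\delta)$, so $F(M\sigma_n)\ge\kappa\cdot M\sigma_n=\sigma_n\ge\sigma_{n+1}$, the last inequality by monotonicity of $(\sigma_n)$. Thus~\eqref{cond:sigmaF} is verified with this $M$ and $L$. If I invoke Lemma~\ref{lem:sigmaF} as a black box I would still need $f$ nondecreasing on $(0,\delta)$, which is not among our hypotheses; so the cleanest route is to \emph{repeat} the proof of Lemma~\ref{lem:sigmaF} verbatim, noting that the only place monotonicity of $f$ entered, namely the estimate $f(x_{N+J})\ge f(M\sigma_{N+J-1})$, can be replaced by the same bound obtained from $x_{N+J}\ge M\sigma_{N+J-1}$ together with $f(y)\ge(1+\kappa)y\ge y=F(y)+y$ on $(0,\delta)$ — i.e.\ use the explicit lower bound $f(y)\ge(1+\kappa)y$ rather than monotonicity. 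Concretely: assume for contradiction $\mathbb P\{\lim x_n=0\}=p>0$, pass to $\tilde\Omega\subseteq\overline\Omega$ with $\mathbb P[\tilde\Omega]=p/2$ on which $x_n\in(0,\delta)$ for $n\ge\overline N$ (Lemma~\ref{lem:lim0}); fix $\varepsilon\in(0,1)$, set $J=[M/(1-\varepsilon)]$, and by Lemma~\ref{lem:kpos} find a.s.\ $N_J>\max\{\overline N,L\}$ with $\xi_i\in(1-\varepsilon,1)$ for $i=N_J+1,\dots,N_J+J$; then the same telescoping via~\eqref{ineq:stoch} and $f(x_k)\ge x_k$ on $(0,K)$ gives $x_{N_J+J}\ge(1-\varepsilon)\sum_{i=0}^{J-1}\sigma_{N_J+i}\ge(1-\varepsilon)J\sigma_{N_J+J-1}\ge M\sigma_{N_J+J-1}$, and thereafter $x_{N_J+J+k}\ge M\sigma_{N_J+J-1}$ for all $k$, contradicting $x_n\to 0$.

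I expect the main (minor) obstacle to be precisely this mismatch between the hypotheses of Lemma~\ref{lem:sigmaF} (which assumes $f$ nondecreasing on $(0,\delta)$ and $(\sigma_n)$ non-increasing) and those available here (where $f$ need not be monotone). There are two honest fixes: (a) strengthen the statement of Lemma~\ref{lem:1+kappa} — or a remark after Lemma~\ref{lem:sigmaF} — to note that in Lemma~\ref{lem:sigmaF} the monotonicity of $f$ can be dropped provided one has a pointwise lower bound $f(x)\ge x$ near $0$ which is already part of~\eqref{cond:xnotzero}, in which case Lemma~\ref{lem:1+kappa} follows immediately; or (b) present the short self-contained argument sketched above. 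Either way the substance is the one-line verification that $F(M\sigma_n)\ge\sigma_n\ge\sigma_{n+1}$ for $M=1/\kappa$ and $n$ large, i.e.\ condition~\eqref{cond:sigmaF}, so that we are in the situation of Lemma~\ref{lem:sigmaF}. I would write the proof in form (a) for brevity: ``Condition~\eqref{cond:fkappa_2} gives $F(x)\ge\kappa x$ on $(0,\delta)$; since $\sigma_n\to 0$, choose $L$ with $M\sigma_n<\delta$ for $n\ge L$ where $M=1/\kappa$, so that $\sigma_{n+1}\le\sigma_n=\kappa M\sigma_n\le F(M\sigma_n)$ for $n\ge L$, i.e.~\eqref{cond:sigmaF} holds. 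The conclusion now follows from (the proof of) Lemma~\ref{lem:sigmaF}, whose only use of monotonicity of $f$ is the bound $f(y)\ge y$ on $(0,K)$, already guaranteed by~\eqref{cond:xnotzero}.''
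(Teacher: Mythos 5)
Your proposal is correct and follows essentially the same route as the paper: verify condition \eqref{cond:sigmaF} with $M$ of order $1/\kappa$ via $F(M\sigma_n)\ge \kappa M\sigma_n\ge\sigma_n\ge\sigma_{n+1}$ and then invoke Lemma \ref{lem:sigmaF}. Your extra care (choosing $L$ so that $M\sigma_n<\delta$, and patching the fact that Lemma \ref{lem:sigmaF} formally assumes $f$ nondecreasing near $0$ by using the pointwise bound $f(y)\ge(1+\kappa)y$ instead) is a legitimate refinement of details the paper glosses over, not a different argument.
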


\begin{proof}
Let $\kappa$ be defined as in \eqref{cond:fkappa_2}.  Then condition \eqref{cond:sigmaF}  holds for $M>\frac 1\kappa$.
Indeed,
\[
F(M\sigma_n)=f(M\sigma_n)-M\sigma_n\ge \kappa M\sigma_n\ge \sigma_n\ge \sigma_{n+1}.
\]
Reference to Lemma \ref{lem:sigmaF} completes the proof.

\end{proof}

\begin{remark}
\label{rem:sigmaF}
Condition \eqref{cond:sigmaF} can be satisfied while \eqref{cond:fkappa_1}  does not hold. 
In particular, it can happen when $\sigma_n$ decays very quickly.  Let 
\[
x^2<f(x)-x<2x^2, \quad x\in (0, \delta),
\]
and
\[
\sigma_n=q^{2^n}, \quad q\in (0, 1).
\]
Then  $\liminf\limits_{x\to 0+}\frac{f(x)}x=1$,  so $f$ does not satisfy \eqref{cond:fkappa_2}. But 
\[
\sigma_{n+1}=q^{2^{n+1}}=(q^{2^n})^2 <F(q^{2^n})=F(\sigma_n),
\]
so \eqref{cond:sigmaF} holds with $M=1$.
\end{remark}

\medskip

\subsubsection{Symmetric noises.}
\label{subsec:gencond}

Suppose that Assumption \ref{as:chibound} holds, $\sigma \in {\bf \ell}_2$, ${\mathbb E}(\xi_i)=0$.  By  Theorem \ref{thm:2ser},  
when   $\sigma\in {\bf \ell}_2$,  for each $N\in \mathbb N$, the sum
\begin{equation}
\label{def:TN(n)}
T_N(n):=\sum_{i=N}^{n} \sigma_i\xi_{i+1}, \quad n\ge N,
\end{equation}
 converges a.s. to a random variable $T_N$,
\begin{equation}
\label{def:TN}
T_N:=\sum_{i=N}^{\infty} \sigma_i\xi_{i+1},
\end{equation}
which has a zero mean, $\mathbb E(T_N)=0$, and the variance $\displaystyle \mathbb Var (T_N)=\sum_{i=N}^{\infty} \sigma_i^2.$

The proof of the next lemma is deferred to Appendix.
\begin{lemma}
\label{lem:symTN}
Let $\sigma\in {\bf \ell}_2$, Assumption \ref{as:chibound} hold and 
\begin{equation}
\label{as:sym}
\begin{split}
&\xi_i, \, \, i\in \mathbf N,  \,\,  \text{are independent identically distributed random variables } \\& \text{with the density $\phi$ being even: $\phi(x)=\phi(-x)$ for all $x\in [-1, 1].$}
\end{split}
\end{equation}
Let $T_N$ be defined as in \eqref{def:TN}.  Then, for each $N\in \mathbb N$,  
\begin{equation}
\label{prop:symTN}
\mathbb P[T_N\le 0]=\frac 12.
\end{equation}
\end{lemma}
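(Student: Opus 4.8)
The plan is to exploit symmetry of the random variable $T_N$.  Since $\xi_i$ are i.i.d.\ with an even density $\phi$, the finite sums $T_N(n)=\sum_{i=N}^n \sigma_i\xi_{i+1}$ satisfy $T_N(n)\stackrel{d}{=}-T_N(n)$: indeed, replacing each $\xi_{i+1}$ by $-\xi_{i+1}$ leaves the joint distribution of $(\xi_{N+1},\dots,\xi_{n+1})$ unchanged (each coordinate is symmetric and they are independent), and it sends $T_N(n)$ to $-T_N(n)$.  Passing to the a.s.\ limit, which exists by the Two-Series Theorem (Theorem~\ref{thm:2ser}) because $\sigma\in{\bf\ell}_2$, we get $T_N\stackrel{d}{=}-T_N$, i.e.\ $T_N$ is a symmetric random variable.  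Hence $\mathbb P[T_N\le 0]=\mathbb P[T_N\ge 0]=\mathbb P[-T_N\le 0]$, and since these two events cover $\Omega$ with intersection $\{T_N=0\}$,
\[
2\,\mathbb P[T_N\le 0]=\mathbb P[T_N\le 0]+\mathbb P[T_N\ge 0]=1+\mathbb P[T_N=0].
\]
So it remains to show $\mathbb P[T_N=0]=0$.

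To see that $\mathbb P[T_N=0]=0$, I would argue that $T_N$ has no atom at $0$.  Write $T_N=\sigma_N\xi_{N+1}+R$, where $R=\sum_{i=N+1}^\infty\sigma_i\xi_{i+1}$ is independent of $\xi_{N+1}$ and $\sigma_N>0$.  Conditioning on $R$,
\[
\mathbb P[T_N=0]=\mathbb E\bigl[\,\mathbb P[\xi_{N+1}=-R/\sigma_N\mid R]\,\bigr],
\]
and for every fixed real value $r$, $\mathbb P[\xi_{N+1}=r]=0$ because $\xi_{N+1}$ has a density (Assumption~\ref{as:chibound}).  Therefore the conditional probability is identically zero and $\mathbb P[T_N=0]=0$, which combined with the previous display yields $\mathbb P[T_N\le 0]=\tfrac12$.

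The main obstacle is making the ``$T_N\stackrel{d}{=}-T_N$'' step fully rigorous at the level of the infinite series rather than the partial sums: one must note that a.s.\ convergence of $T_N(n)$ to $T_N$ (guaranteed by Theorem~\ref{thm:2ser}) implies convergence in distribution, and that the map $(\xi_{N+1},\xi_{N+2},\dots)\mapsto(-\xi_{N+1},-\xi_{N+2},\dots)$ is a measure-preserving transformation of the underlying sequence space carrying $T_N(n)$ to $-T_N(n)$ for every $n$, hence carrying the a.s.\ limit $T_N$ to $-T_N$; thus $T_N$ and $-T_N$ have the same law.  Everything else — the symmetry bookkeeping $2\mathbb P[T_N\le0]=1+\mathbb P[T_N=0]$ and the atomlessness of $T_N$ via the density of a single summand — is routine.
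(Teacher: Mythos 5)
Your proof is correct, and it shares the paper's two-ingredient skeleton (symmetry of the law of $T_N$ plus absence of an atom at $0$), but it executes the symmetry step by a genuinely different and more economical route. The paper (Lemma \ref{lem:symSn}) proves the pointwise identity $\mathbb P[T_N(n)\le x]=\mathbb P[T_N(n)\ge -x]$ by explicit convolution integrals of the even densities and induction on the number of summands, and then passes to the limit in distribution; that limit passage tacitly requires $x$ to be a continuity point of the law of $T_N$, which is exactly why the paper first proves its part (i), namely $\mathbb P[T_N=x]=0$ for \emph{every} $x$. You instead observe that the coordinatewise sign flip preserves the joint law of the i.i.d.\ symmetric sequence and carries each partial sum $T_N(n)$ to $-T_N(n)$, hence (via a.s.\ convergence from Theorem \ref{thm:2ser} and uniqueness of distributional limits) $T_N\stackrel{d}{=}-T_N$; this bypasses both the convolution computations and the continuity-point issue, and you then need atomlessness only at the single point $0$. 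Your atomlessness argument is essentially the paper's part (i) technique: split off the summand $\sigma_N\xi_{N+1}$, which has a density by Assumption \ref{as:chibound} and $\sigma_N>0$, and condition on the independent remainder $R$ of the series (which converges by the same Two-Series argument). The final bookkeeping $2\,\mathbb P[T_N\le 0]=1+\mathbb P[T_N=0]$ is sound. In short: same overall strategy, but your symmetry step is more abstract and shorter, while the paper's computational route proves slightly more along the way (symmetry of the whole distribution function together with continuity of the law at every point).
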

Note that \eqref{as:sym} implies ${\mathbb E} \xi_i=0$.

\begin{lemma}
\label{lem:<12}
Let Assumptions \ref{as:g12}, \ref{as:chibound} and condition \eqref{as:sym} hold, $\sigma\in {\bf \ell}_2$, and $x_n$ be a 
solution to equation \eqref{eq:main} with the initial value $x_0>0$. Then 
\begin{equation}
\label{cond:prob1/2}
\mathbb P\{\omega\in \Omega: \lim_{n\to \infty} x_n(\omega)=0\}\le 1/2.
\end{equation}
\end{lemma}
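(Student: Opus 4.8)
\textbf{Proof proposal for Lemma~\ref{lem:<12}.}

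The plan is to argue by contradiction, assuming $\mathbb P[\bar\Omega]=p>1/2$ where $\bar\Omega=\{\omega: \lim_{n\to\infty}x_n(\omega)=0\}$, and then to produce a subset of $\bar\Omega$ of probability strictly exceeding $1/2$ on which the partial sums $T_N(n)=\sum_{i=N}^n\sigma_i\xi_{i+1}$ eventually exceed a fixed threshold $\delta\in(0,K)$ — which, via the usual comparison argument, forces $x_n$ to stay bounded away from zero and contradicts convergence to zero. First I would fix $\delta\in(0,K)$ and invoke Lemma~\ref{lem:lim0} with $\alpha=\frac{1}{2}(p-\tfrac12)$ to obtain a nonrandom $N=\bar N(\delta,\alpha)$ and a set $\Omega(\delta,\alpha)\subseteq\bar\Omega$ with $\mathbb P\{\Omega(\delta,\alpha)\}\ge p-\alpha>1/2$, on which $x_n\in[0,\delta)$ for all $n\ge N$. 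On this set, exactly as in the proof of Lemma~\ref{lem:+infty}, inequality \eqref{ineq:stoch} together with $f(x)\ge x$ on $(0,\delta)\subset(0,K)$ gives inductively $x_{n+1}\ge\sum_{i=N}^n\sigma_i\xi_{i+1}=T_N(n)$ for all $n\ge N$, hence $x_{n+1}\ge T_N:=\lim_n T_N(n)$ (which exists a.s. by Theorem~\ref{thm:2ser} since $\sigma\in\ell_2$) on $\Omega(\delta,\alpha)$.

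Now comes the key point: by Lemma~\ref{lem:symTN}, $\mathbb P[T_N\le 0]=1/2$, so $\mathbb P[T_N>0]=1/2$ (the distribution of $T_N$ is continuous, being a sum of scaled continuous variables, so $\mathbb P[T_N=0]=0$). Since $\mathbb P\{\Omega(\delta,\alpha)\}>1/2=\mathbb P[T_N>0]$... wait — this only shows the two events need not be disjoint, which is not enough. The correct move is subtler: on $\Omega(\delta,\alpha)$ we have $0=\lim_n x_{n+1}\ge T_N$ a.s., i.e. $T_N\le 0$ on $\Omega(\delta,\alpha)$, so $\Omega(\delta,\alpha)\subseteq\{T_N\le 0\}$ up to a null set. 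Therefore $\mathbb P\{T_N\le 0\}\ge\mathbb P\{\Omega(\delta,\alpha)\}\ge p-\alpha>1/2$, contradicting $\mathbb P[T_N\le 0]=1/2$ from Lemma~\ref{lem:symTN}. Hence $p\le 1/2$, which is \eqref{cond:prob1/2}.

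The main obstacle — and the step demanding the most care — is establishing cleanly that on $\Omega(\delta,\alpha)$ one has $T_N\le 0$ almost surely: this requires that the inductive lower bound $x_{n+1}\ge T_N(n)$ truly holds for \emph{every} $n\ge N$ on $\Omega(\delta,\alpha)$ (so that passing to the limit is legitimate), which in turn relies on all iterates $x_n$ staying in $[0,\delta)\subset[0,K)$ so that $f(x_n)\ge x_n$ may be used at each step; this is exactly what Lemma~\ref{lem:lim0} secures. A secondary point is to note that the convergence $T_N(n)\to T_N$ is the a.s. convergence of the series, valid by the Two-Series Theorem because $\sum\sigma_i^2<\infty$ and $\mathbb E\xi_i=0$ (which follows from \eqref{as:sym}), and that $\delta$ being fixed is harmless — any single admissible $\delta\in(0,K)$ suffices for the contradiction. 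Note that no lower bound on $f$ near zero beyond $f(x)>x$ is used, consistent with the statement relying only on \eqref{cond:xnotzero}-type behavior captured inside Assumption~\ref{as:g12}.
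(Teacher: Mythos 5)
Your proposal is correct and follows essentially the same route as the paper's proof: use Lemma~\ref{lem:lim0} to extract a set of probability at least $p-\alpha$ on which $x_n\in[0,\delta)$ eventually, derive the inductive bound $x_{n+1}\ge\sum_{i=N}^{n}\sigma_i\xi_{i+1}$ from \eqref{ineq:stoch} and $f(x)>x$ on $(0,K)$, pass to the limit to conclude this set is contained in $\{T_N\le 0\}$, and invoke Lemma~\ref{lem:symTN} giving $\mathbb P[T_N\le 0]=1/2$. The only cosmetic difference is that you phrase it as a contradiction with $p>1/2$ while the paper shows directly $\mathbb P[A_\alpha]\le 1/2$ for every $\alpha$ and lets $\alpha\to 0$; the substance is identical.
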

\begin{proof}
Define
\[
A:=\{\omega\in \Omega: \lim_{n\to \infty} x_n(\omega)=0\}.
\]
By Lemma \ref{lem:lim0}, for each $\alpha \in (0,{\mathbb P}\{A\})$, we can find nonrandom number $N_\alpha\in \mathbb N$ and the  set $A_\alpha\subseteq A$ such that 
\[
x_n(\omega) \in [0, \delta), \quad n\ge N_\alpha, \quad \omega\in A_\alpha, \quad \mathbb P\{A_\alpha\}>\mathbb P\{A\}-\alpha>0.
\]
Our purpose is to show that, for each $\alpha>0$,
\[
\mathbb P[A_\alpha]\le 1/2.
\]
Since $\alpha$ can be arbitrarily small, this will imply $\mathbb P[A]\le 1/2$.
Applying  the same estimations as in  \eqref{ineq:stoch}, we obtain that, for  $n\ge N_\alpha$, $\omega\in A_\alpha$,
\begin{equation}
\label{est:xNa}
x_{n+1}\ge \sum_{i=N_\alpha}^n\sigma_{i}\xi_{i+1}.
\end{equation}
Since   $\displaystyle \sum_{i=N_\alpha}^n\sigma_{i}\xi_{i+1}$ converges a.s. and, for  $\omega\in A_\alpha$, 
we have $x_n\to 0$,  estimate \eqref{est:xNa} yields that on $A_\alpha$,
$$
0\ge \sum_{i=N}^\infty \sigma_{i}\xi_{i+1}=T_N.
$$
So, $A_\alpha\subseteq \{T_N\le 0\}$,  which implies 
$\displaystyle 
\mathbb P[A_\alpha]\le \mathbb P[\Omega_{N_\alpha}]$.
Now the result follows from \eqref{prop:symTN}.
\end{proof}


\subsection{Main stochastic theorem}
\label{subsec:genstochthm} 
In Theorem \ref{thm:mainstoch} below we present  conditions when a solution of equation \eqref{eq:main} converges to a positive 
equilibrium and estimate the probability of this event. 
The proof is a corollary of Theorem \ref{thm:detmain} and  Lemmata 
\ref{lem:infty2}, \ref{lem:sigmaF}, \ref{lem:symTN}.

\begin{theorem}
\label{thm:mainstoch}
Let Assumptions \ref{as:g12}, \ref{as:g34}, \ref{as:fxinf}, \ref{as:chibound}  and condition \eqref{cond:sigmatozero} hold,
$x_n$ be a solution to equation \eqref{eq:main} with the initial value $x_0>0$. 
\begin{enumerate}
\item [(i)] 
If $\sigma\notin {\bf \ell}_2$, $\mathbb E\xi_n=\mu_n$,  $\mathbb E\xi_n^2=1$, $\mu^-=(\mu_n^-)\in {\bf \ell}_2$ then 
\begin{equation}
\label{rel:K}
\mathbb P\{\omega\in \Omega: \lim_{n\to \infty} x_n(\omega)=K\}=1.
\end{equation}
\item [(ii)] Let $\sigma\in {\bf \ell}_2$.
\begin{enumerate}
\item If condition \eqref{cond:sigmaF}  is fulfilled, $f$ is a nondecreasing function on  $(0, \delta)$ for some $\delta>0$, $(\sigma_n)_{n\in \mathbb N}$ is a non-increasing sequence, $\xi_n$ either are identically distributed or satisfy Remark \ref{rem:noniid}, then \eqref{rel:K} holds.
\item If condition\eqref{as:sym} is fulfilled then $ \mathbb P\{\omega\in \Omega: \lim_{n\to \infty} x_n(\omega)=K\}>1/2.$
\end{enumerate}
\end{enumerate}
\end{theorem}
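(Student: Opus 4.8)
The plan is to combine the deterministic dichotomy of Theorem \ref{thm:detmain} with the various lemmata that preclude (or bound the probability of) convergence to zero, noting first that the global Assumptions \ref{as:g12}, \ref{as:g34}, \ref{as:fxinf} together with \eqref{cond:sigmatozero} put us exactly in the setting where Theorem \ref{thm:detmain} applies pathwise: for $\mathbb{P}$-almost every $\omega$, the sequence $\sigma_n\xi_{n+1}(\omega)$ tends to zero (since $|\xi_n|\le 1$ and $\sigma_n\to 0$), so equation \eqref{eq:main} becomes, along that path, a deterministically perturbed equation \eqref{eq:ggamma} with $\gamma_{n+1}=\sigma_n\xi_{n+1}(\omega)\to 0$. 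Hence, a.s., $\lim_{n\to\infty}x_n(\omega)\in\{0,K\}$. Consequently, for each part it suffices to control the probability of the event $A:=\{\omega:\lim_{n\to\infty}x_n(\omega)=0\}$, because on the complement (intersected with the full-measure set where the dichotomy holds) the limit is necessarily $K$.

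For part (i), the hypotheses $\sigma\notin\boldsymbol{\ell}_2$, $\mathbb{E}\xi_n^2=1$ and $\mu^-=(\mu_n^-)\in\boldsymbol{\ell}_2$ are precisely those of Lemma \ref{lem:infty1}, so \eqref{eq:+infty} holds a.s.; since Assumption \ref{as:g12} implies the weaker condition \eqref{cond:xnotzero}, Lemma \ref{lem:infty2} (equivalently, Lemma \ref{lem:+infty}) gives $\mathbb{P}\{A\}=0$, and therefore \eqref{rel:K} follows. For part (ii)(a), the stated hypotheses — $\sigma\in\boldsymbol{\ell}_2$, the monotonicity of $(\sigma_n)$, nondecreasing $f$ near $0$, the identical-distribution (or Remark \ref{rem:noniid}) assumption on $\xi_n$, and condition \eqref{cond:sigmaF} — are exactly the hypotheses of Lemma \ref{lem:sigmaF}, which yields $\mathbb{P}\{A\}=0$ and hence \eqref{rel:K}. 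For part (ii)(b), Lemma \ref{lem:<12} gives directly $\mathbb{P}\{A\}\le 1/2$; combined with the a.s. dichotomy this yields $\mathbb{P}\{\lim_n x_n=K\}=1-\mathbb{P}\{A\}\ge 1/2$, and strict inequality $>1/2$ follows because $\mathbb{P}\{A\}<1/2$: indeed Lemma \ref{lem:kpos} (applicable under \eqref{as:sym}) shows that on a set of positive probability the trajectory, once trapped near $0$, receives a run of $J$ consecutive near-maximal positive noises, which by the estimates in the proof of Lemma \ref{lem:sigmaF} pushes $x_n$ away from a neighborhood of zero — so $A$ cannot carry the full half-probability allotted by $\{T_N\le 0\}$.

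The only genuinely delicate point is the strict inequality in (ii)(b): the bound $\mathbb{P}\{A\}\le 1/2$ from Lemma \ref{lem:<12} is obtained by showing $A_\alpha\subseteq\{T_N\le 0\}$ for an auxiliary set $A_\alpha$ with $\mathbb{P}\{A_\alpha\}>\mathbb{P}\{A\}-\alpha$, and $\mathbb{P}\{T_N\le 0\}=1/2$ by symmetry (Lemma \ref{lem:symTN}); to upgrade this to a strict inequality one must exhibit a subset of $\{T_N\le 0\}$ of positive probability on which the solution does \emph{not} converge to zero. I would argue that even conditionally on $T_N\le 0$, with positive probability a block of $J$ consecutive values $\xi_{i}\in(1-\varepsilon,1)$ occurs after time $N$ (this conditional positivity uses the independence structure and that the event $\{T_N\le0\}$ has positive probability), and then the inductive lower bounds of Lemma \ref{lem:sigmaF}'s proof force $\liminf x_n>0$ on that block's event; hence $\mathbb{P}\{A\}<1/2$. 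Everything else is a bookkeeping assembly of already-proved statements, so I would keep the writeup short, quoting Theorems \ref{thm:detmain}, \ref{thm:2ser} and Lemmata \ref{lem:lim0}, \ref{lem:kpos}, \ref{lem:infty1}, \ref{lem:infty2}, \ref{lem:sigmaF}, \ref{lem:symTN}, \ref{lem:<12} rather than reproving anything.
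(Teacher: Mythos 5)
Your assembly for (i) and (ii)(a) is exactly the paper's proof: the paper disposes of Theorem~\ref{thm:mainstoch} in one line as a corollary of Theorem~\ref{thm:detmain} (applied pathwise, since $|\xi_n|\le 1$ and \eqref{cond:sigmatozero} give $\gamma_{n+1}=\sigma_n\xi_{n+1}\to 0$ on all of $\Omega$) together with Lemmata~\ref{lem:infty2} and \ref{lem:sigmaF}, and your observation that Assumption~\ref{as:g12} implies \eqref{cond:xnotzero} is the right bookkeeping. For those two parts there is nothing to add.

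The problem is your argument for the strict inequality in (ii)(b). Part (ii)(b) assumes only $\sigma\in{\bf \ell}_2$ and \eqref{as:sym}; condition \eqref{cond:sigmaF} is \emph{not} available there, yet your upgrade from $\mathbb P\{A\}\le 1/2$ to $\mathbb P\{A\}<1/2$ leans on ``the inductive lower bounds of Lemma~\ref{lem:sigmaF}'s proof.'' Those bounds are exactly where \eqref{cond:sigmaF} is used: after the block of $J$ near-maximal noises one gets $x_{N+J}\ge(1-\varepsilon)\sum_{i=0}^{J-1}\sigma_{N+i}\ge M\sigma_{N+J-1}$, and it is only the inequality $\sigma_{n+1}\le F(M\sigma_n)$ that converts this \emph{temporary} boost at a finite time into a \emph{permanent} lower bound contradicting $x_n\to 0$. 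Without \eqref{cond:sigmaF} (or a condition like \eqref{cond:fkappa_1}), a finite-time excursion away from a neighborhood of zero does not preclude $\lim_n x_n=0$: Example~\ref{ex:exlim1} and the simulations with $\sigma_n=\varepsilon/n^8$ show trajectories that receive positive kicks and nevertheless decay to zero. There is the additional delicacy that the block event is a function of the same $\xi_i$ entering $T_N$, so ``positive conditional probability given $T_N\le 0$'' needs an argument, but even granting it the conclusion ``$\liminf x_n>0$ on that event'' does not follow. In fairness, the paper itself supplies no argument for the strict ``$>1/2$'': Lemma~\ref{lem:<12} only yields $\mathbb P\{A\}\le 1/2$, hence $\mathbb P\{\lim_n x_n=K\}\ge 1/2$ after the dichotomy, and the paper's one-line proof does not close that gap either. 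So your instinct that something extra is required is correct, but the specific mechanism you propose would fail; as written, the honest conclusion of the assembled argument is the non-strict bound.
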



\section{Simulations}
\label{sec:sim}

In this section we simulate solutions of the stochastically perturbed equations
\begin{equation}
\label{ex1_eq1}
x_{n+1}=\max\left\{ x_n+x_n^2-x_n^3-x_n^4+ \sigma_n \xi_{n+1}, 0 \right\}, \quad n=0, 1, \dots, 
\end{equation}
where $\xi$ are uniformly distributed in $[-1,1]$ and $\sigma_n$ takes 3 forms: it is either zero (which corresponds to the 
deterministic case), or  $\sigma_{n}=\varepsilon/n^d$, $d>0$, or $\sigma_n=e^{-2^n}$.
We are mostly interested in small initial values since we want to investigate the behavior of solution in the right neighborhood of zero. 

Fig.~\ref{figure1} shows convergence of a solution to the equilibrium $K \approx 0.618034$ in the deterministic case,  $\sigma_{n}\equiv 0$, for a  small initial value $x_0=0.001$.

\begin{figure}[ht]
\centering
\includegraphics[height=.22\textheight]{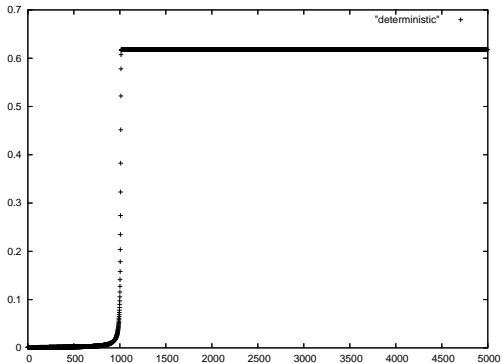}
\caption{The solution of the deterministic equation with $x_0=0.001$}
\label{figure1}
\end{figure}

Consider the function 
$$f(x)=(x+x^2)(1-x^2)=x+x^2-x^3-x^4$$  
which satisfies
$$f(x)>x,\,\, x \in (0,K), \,\, f(x)<x, \,\, x > K, \,\, K \approx 0.618034.$$
The derivative
$$f'(x)=1+2x-3x^2-4x^3>0, ~~x \in [0,K],$$
so the equilibrium is globally asymptotically stable, with eventually monotone convergence to $K$.
In fact, the derivative vanishes at the unique positive point $\approx 0.6404>K$, so $f$ is a unimodal function. 
Next, 
$$F(x)=f(x)-x=x^2-x^3-x^4,$$
and condition \eqref{cond:sigmaF} has the form
\begin{equation}
\label{cond:40}
\sigma_{n+1} \leq M^2\sigma_{n}^2-M^3\sigma_{n}^3-M^4\sigma_{n}^4, \,\,  n \geq \bar N,
\end{equation}
which is impossible for any constant $M>0$ and $\sigma_{n}=\varepsilon/n^d$, $d>0$.
Thus, for $\sigma_{n}=\varepsilon/n^d$, $d>0$, condition \eqref{cond:sigmaF} does not hold.

Fig.~\ref{figure2} presents asymptotics of solutions of the stochastically perturbed equations \eqref{ex1_eq1} with 
$\sigma_{n}=\varepsilon/n^d$, $d>0$. 
The value  $d=0.5$ corresponds to the case when $\sigma\notin {\bf \ell}_2$, and the left part of the 
Fig.~\ref{figure2} demonstrates that all solutions converge to the equilibrium $K \approx 0.618034$.  Value 
$d=8$ corresponds to the case when $\sigma\in {\bf \ell}_2$, and the right part of Fig.~\ref{figure2} demonstrates that approximately 
a half of solutions converges to $K$ while another half converges to zero.


\begin{figure}[ht]
\centering
\includegraphics[height=.22\textheight]{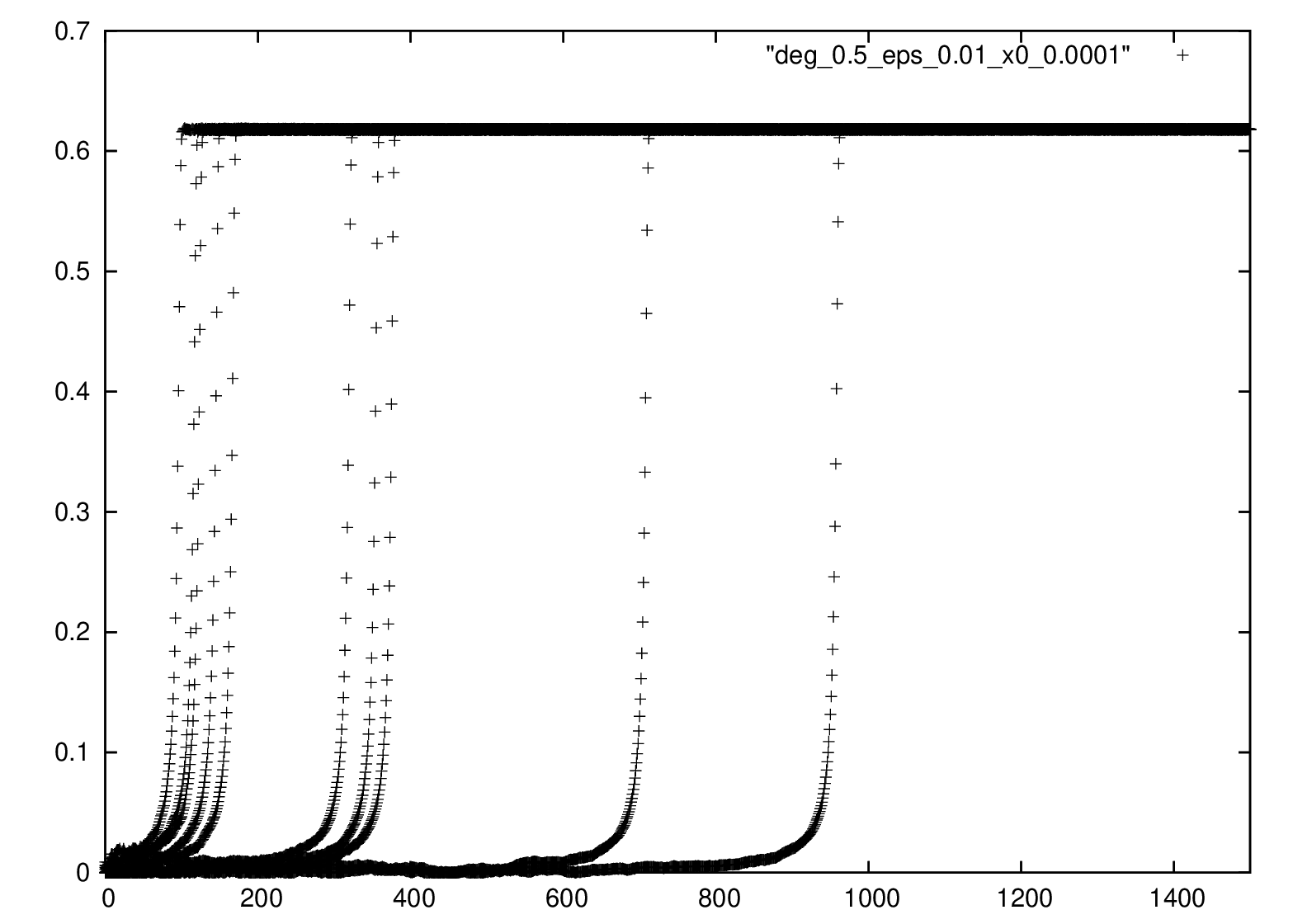}
\includegraphics[height=.22\textheight]{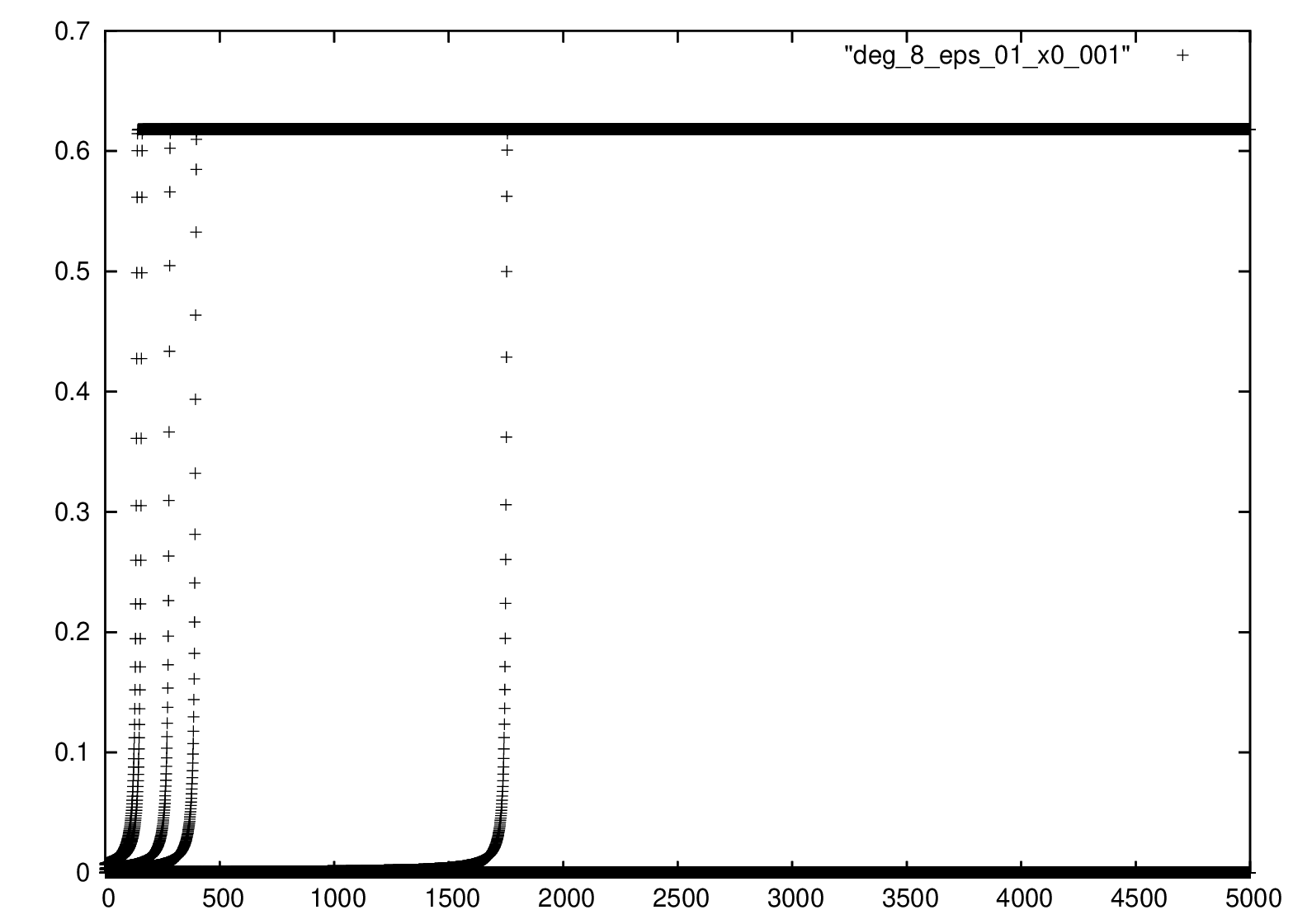}
\caption{Ten runs of equation (\protect{\ref{ex1_eq1}}) with $x_0=0.001$, $\varepsilon=0.01$ and $d=0.5$ (left), $d=8$ 
(right), $\xi$ are uniformly distributed in $[-1,1]$.}
\label{figure2}
\end{figure}

However, for 
\[
\sigma_n=e^{-2^n}, 
\]
condition \eqref{cond:40}  hold, for example, with $M=2$. To show that we note that there exists $N_1\in \mathbf N$ such that, for 
all $n\ge N_1$,  we have
\[
1-2e^{- 2^n} -4e^{-2 \cdot 2^n}\ge \frac 12.
\]
So, for all $n\ge N_1$, 
\[
4e^{-2 \cdot 2^n}-8e^{-3 \cdot 2^n}-16e^{-4\cdot 2^n}=4e^{-2 \cdot 2^n}\left[ 1-2e^{- 2^n} -4e^{-2 \cdot 2^n}\right]\ge 2e^{-2 \cdot 2^n}>e^{-2^{n+1}},
\]
which implies \eqref{cond:40}.


The case when the noise is decaying faster than the geometric sequence, $\sigma_n=e^{-2^n}$,
is illustrated in Fig.~\ref{figure3}: all solutions converge to the equilibrium $K \approx 0.618034$. 

\begin{figure}[ht]
\centering
\includegraphics[height=.22\textheight]{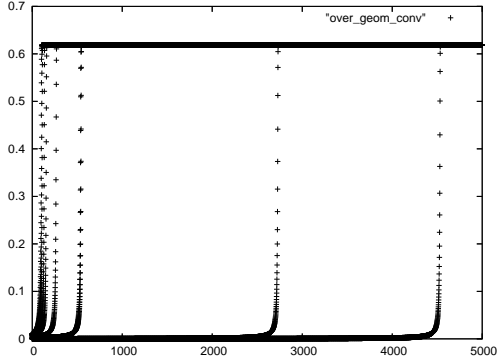}
\caption{Ten runs with $x_0=0.001$, the noise $e^{-2^n} \xi_n$, where $\xi$ are uniformly distributed in $[-1,1]$.}
\label{figure3}
\end{figure}


\section{Discussion}

For one-dimensional maps describing models of population dynamics, there are several methods to stabilize an 
equilibrium, see, for example, \cite{BF15,BL} and references therein. 
In the present paper, we considered the case of the unique 
positive equilibrium, though the technique can be applied to problems with several positive equilibria. 
The phenomenon that the solution under stochastic (and even deterministic) eventually vanishing perturbations can tend to zero may be 
compared to the Allee effect. For each map and maximum amplitude of perturbations,
there is a certain threshold. If the initial value exceeds the threshold, it tends to the positive equilibrium; 
with lower initial values, solutions may tend to zero. However, for unbounded noises $\xi_i$, for example, with the
normal distribution $\mathcal N(0,1)$, this is no longer valid. In this case, no initial value can guarantee that there is no $x_n=0$ 
in the solution sequence.

Stochastic perturbations frequently occur in nature, so
their influence on persistence of populations is an important question. 
In the present paper we considered the case when perturbations are bounded
and tend to zero with time, and still there is a harmful effect 
of stochastic perturbations on population survival. There are still many problems to be explored.

\begin{itemize}
\item
Can the estimate of $1/2$ in Theorem \ref{thm:mainstoch} (b) be improved? 
\item
For unbounded noises $\xi_i$, like $\mathcal N(0, 1)$,  we cannot expect to get results in (i) and (ii)(a) of
Theorem \ref{thm:mainstoch} to hold a.s. It would be interesting to estimate corresponding probabilities.
To update the proofs, it would probably be necessary to impose extra assumption on $\sigma_n$
in order to insure that the Lindeberg condition (and therefore the Central Limit Theorem) holds. 
\item
In the case of multiple positive equilibrium points, in which cases will the probability that a solution of a stochastically 
perturbed equation with an eventually vanishing perturbation tends to the originally unstable equilibrium will be greater than zero? 
\item
If there is a unique unstable positive equilibrium combined with a stable cycle, can the results of the present paper be extended
to establish conditions and probabilities that the solution subject to stochastic perturbations will converge to this cycle?
Two-cyclic behaviour of difference equations subject to eventually vanishing stochastic perturbations was studied in \cite{BRCAM12}.
\end{itemize}

\section{Acknowledgment} 

The authors are grateful to Professor J.\,A.\,D.  Appleby for fruitful discussions. 

The research of both authors was supported by AIM SQuaRE program and by NSERC grant RGPIN-2015-05976.

\section{Appendix}
\label{sec:ap}

\subsection{Proof of Lemma \ref{lem:lim0}}

Note that for each $\omega \in \bar \Omega$, there exists $N(\omega)$ such that for all $n\ge N(\omega)$,
we have $x_n(\omega)\in [0, \delta)$.
Define
\[
\Omega_i:=\{\omega\in \bar \Omega:\,\,  i\le N(\omega)<i+1\}.
\]
Then 
$\displaystyle 
\bar \Omega=\bigcup_{i=1}^\infty\Omega_i, \quad \Omega_i\cap \Omega_j=\emptyset, \quad i\neq j$,
and
$\displaystyle
p=\mathbb P\{\bar \Omega)\}=\mathbb P\left\{\bigcup_{i=1}^\infty\Omega_i\right\}=\sum_{i=1}^\infty\mathbb P\{\Omega_i\}$.
Thus there exists $\bar N=\bar N(\delta, \alpha)\in \mathbf N$ such that
$\displaystyle
\sum_{i=1}^{\bar N}\mathbb P\{\Omega_i\}\ge p-\alpha$.
Denote 
\[
\Omega(\delta, \alpha):=\bigcup_{i=1}^{\bar N}\Omega_i=\{\omega\in \bar \Omega:  N(\omega)\le \bar N\}=\{\omega\in \bar \Omega: x_n(\omega)<\delta, \,\, n\ge \bar N\}.
\]
So, 
\[
\mathbb P\{\Omega(\delta, \alpha)\}\ge p-\alpha, \quad \text{and} \quad x_n(\omega)\in [0,\delta)  \quad \text{whenever} \quad \omega \in \Omega_\delta, n\ge \bar N.
\]

\medskip

\subsection{Proof of Lemma \ref{lem:infty1}}
\label{subsec:CLT}

The proof of Lemma \ref{lem:infty1} can be obtained as a corollary of Theorems \ref{thm:CLTL},  
Lemma \ref{lem:sqrtn} and Remark \ref{rem:Lind}  which are further stated in this section.

The Central Limit Theorem for (normalized and centralized) sum of independent 
random variables $\zeta_1, \zeta_2, \dots, \zeta_n, \dots$ is proved in \cite[p. 328--332]{Shiryaev96}, under the assumption that the 
classical {\it Lindeberg condition} is satisfied.  We formulate this theorem below.

\begin{theorem}[Central Limit Theorem]
 \label{thm:CLTL}
  Let  $\zeta_1, \zeta_2, \dots, \zeta_n, \dots$ be a sequence of  independent random variables
  with a finite second moment. Let $\mathbb E\zeta_k=0$, $\mathbb E\zeta^2_k=\sigma_k^2$, $S_n=\zeta_1+ \zeta_2+ \dots+ \zeta_n$, 
$D^2_{n}:=\sum_{k=1}^{n}\sigma^2_k$, and let $F_k=F_k(x)$ be a distribution function of the random variable $\zeta_k$. 

Suppose that $\displaystyle \lim_{n \to \infty} D^2_{n} = \infty$, and the Lindeberg condition is satisfied, i.e. for every 
$\varepsilon>0$
\begin{equation}
\label{cond:Lind}
\lim_{n \to \infty} \frac 1{D^2_{n}}\sum_{k=1}^{n}\int\limits_{x:|x|\ge \varepsilon D_n}  \!\!\!\!\!\!\!\!\!\ xdF_k(x) = 0.
\end{equation}

Then
\begin{equation*}
\frac{S_n}{D_n}\,\,  \xrightarrow{d} \,\,\mathcal N(0, 1).
\end{equation*}
\end{theorem}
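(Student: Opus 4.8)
The plan is to proceed via characteristic functions, the route taken in \cite[p.~328--332]{Shiryaev96}. Write $\varphi_k(t)=\mathbb E e^{it\zeta_k}$; by independence the characteristic function of $S_n/D_n$ is $\Phi_n(t)=\prod_{k=1}^n\varphi_k(t/D_n)$, and by L\'evy's continuity theorem it is enough to show $\Phi_n(t)\to e^{-t^2/2}$ pointwise in $t$, since $e^{-t^2/2}$ is the characteristic function of $\mathcal N(0,1)$. The argument splits into three ingredients: (a) the Lindeberg condition forces the Feller negligibility bound $\max_{1\le k\le n}\sigma_k^2/D_n^2\to 0$; (b) $\sum_{k=1}^n\bigl(\varphi_k(t/D_n)-1\bigr)\to -t^2/2$; and (c) the deterministic lemma that, if $\max_k|z_{nk}-1|\to 0$ and $\sup_n\sum_k|z_{nk}-1|<\infty$, then $\prod_k z_{nk}-\exp\bigl(\sum_k(z_{nk}-1)\bigr)\to 0$.

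For (a), writing $\sigma_k^2=\int_{|x|<\varepsilon D_n}x^2\,dF_k(x)+\int_{|x|\ge\varepsilon D_n}x^2\,dF_k(x)$ and dividing by $D_n^2$ gives $\max_k\sigma_k^2/D_n^2\le\varepsilon^2+\frac1{D_n^2}\sum_k\int_{|x|\ge\varepsilon D_n}x^2\,dF_k(x)$; letting $n\to\infty$ and then $\varepsilon\downarrow0$ and invoking \eqref{cond:Lind} yields the claim. For (b) I would use the elementary inequality $\bigl|e^{iu}-1-iu+\tfrac{u^2}{2}\bigr|\le\min\{u^2,\tfrac{|u|^3}{6}\}$; since $\mathbb E\zeta_k=0$, this gives
\[
\Bigl|\varphi_k(t/D_n)-1+\frac{t^2\sigma_k^2}{2D_n^2}\Bigr|\le\int_{|x|<\varepsilon D_n}\frac{|t|^3|x|^3}{6D_n^3}\,dF_k(x)+\int_{|x|\ge\varepsilon D_n}\frac{t^2x^2}{D_n^2}\,dF_k(x),
\]
and summing over $k\le n$: the first term is at most $\tfrac{\varepsilon|t|^3}{6}$ (using $|x|^3<\varepsilon D_n x^2$ on the bulk and $\sum_k\sigma_k^2=D_n^2$), while the second tends to $0$ by \eqref{cond:Lind}. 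Since $\sum_k t^2\sigma_k^2/(2D_n^2)=t^2/2$ and $\varepsilon>0$ is arbitrary, (b) follows. The same bound together with $\max_k\sigma_k^2/D_n^2\to0$ from (a) shows $\max_k|\varphi_k(t/D_n)-1|\to0$ and keeps $\sum_k|\varphi_k(t/D_n)-1|$ bounded, so the lemma in (c) applies with $z_{nk}=\varphi_k(t/D_n)$ and delivers $\Phi_n(t)\to\exp(-t^2/2)$, completing the proof.

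The main obstacle is bookkeeping rather than conceptual: one must make the Taylor remainder estimate for $\varphi_k$ uniform in $k$ and track the powers of $D_n$ so that truncation at level $\varepsilon D_n$ cleanly separates a ``bulk'' contribution controlled by $\varepsilon$ from a ``tail'' contribution controlled by the Lindeberg sum, and then verify the hypotheses of the product-versus-exponential lemma, for which the negligibility bound (a) is exactly what is needed. Since a careful and complete treatment of these points occupies several pages, in the paper I would simply cite \cite[p.~328--332]{Shiryaev96} for the full argument and use the statement as given.
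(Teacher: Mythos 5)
Your sketch is the standard Lindeberg--L\'evy characteristic-function argument, which is exactly the proof in the cited source \cite[p.~328--332]{Shiryaev96}; the paper itself offers no independent proof of Theorem~\ref{thm:CLTL} but simply quotes that reference, so your approach coincides with the paper's, and your three ingredients (negligibility, the Taylor estimate with truncation at $\varepsilon D_n$, and the product-versus-exponential lemma) are all correct. Note only that you rightly read the integrand in \eqref{cond:Lind} as $x^2\,dF_k(x)$, correcting a typo in the paper's statement of the Lindeberg condition.
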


By applying the Central Limit Theorem \ref{thm:CLTL}, the  following theorem was proven in \cite{AR1}.
\begin{lemma}
 \label{lem:sqrtn}
 Let assumptions of Theorem \ref{thm:CLTL} holds.  Then
\begin{equation}
\label{cond:sqrt}
 \limsup_{n\to \infty}\frac{S_n}{D_n}=\infty,\quad
 \liminf_{n\to \infty}\frac{S_n}{D_n}=-\infty, \quad\text{a.s.}
\end{equation}
\end{lemma}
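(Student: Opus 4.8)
The statement to prove is Lemma~\ref{lem:sqrtn}: under the hypotheses of the Central Limit Theorem \ref{thm:CLTL} (independence, finite second moments, zero means, $D_n^2 \to \infty$, Lindeberg), one has $\limsup_{n\to\infty} S_n/D_n = \infty$ and $\liminf_{n\to\infty} S_n/D_n = -\infty$ almost surely.

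\medskip

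The plan is to argue by contradiction using the CLT together with a zero--one law. First I would record the key convergence in distribution: $S_n/D_n \xrightarrow{d} \mathcal N(0,1)$, so for any fixed constant $A>0$,
\[
\liminf_{n\to\infty}\mathbb P\bigl[S_n/D_n > A\bigr] \ge \mathbb P[\mathcal N(0,1) > A] =: c_A > 0,
\]
since $A$ is a continuity point of the standard normal CDF. In particular the event $\{S_n/D_n > A\}$ occurs for infinitely many $n$ with probability at least $c_A$ — more carefully, $\mathbb P\bigl[\limsup_n \{S_n/D_n > A\}\bigr] \ge \limsup_n \mathbb P[S_n/D_n > A] \ge c_A > 0$ by reverse Fatou. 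Hence the event $\Gamma_A := \{\limsup_n S_n/D_n > A\}$ (which contains $\limsup_n\{S_n/D_n>A\}$) has $\mathbb P[\Gamma_A]\ge c_A>0$.

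\medskip

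The next step is to upgrade this positive probability to probability one via a tail/zero--one argument. The quantity $\limsup_n S_n/D_n$ is not quite a tail function of $(\zeta_n)$ because the normalization $D_n$ and the partial sums both involve the early terms; however, since $D_n\to\infty$, changing finitely many $\zeta_k$ alters $S_n$ by a bounded amount and alters $D_n$ only in finitely many terms, so it does not change $\limsup_n S_n/D_n$. Thus $\limsup_n S_n/D_n$ is measurable with respect to the tail $\sigma$-algebra $\bigcap_m \sigma(\zeta_m,\zeta_{m+1},\dots)$, and by the Kolmogorov zero--one law it is almost surely equal to a constant $\ell \in [-\infty,\infty]$. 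Since $\mathbb P[\limsup_n S_n/D_n > A] \ge c_A > 0$ for every $A>0$, that constant must satisfy $\ell > A$ for all $A$, i.e. $\ell = +\infty$ a.s. This proves $\limsup_{n\to\infty} S_n/D_n = \infty$ a.s. Applying the same reasoning to $-\zeta_n$ (whose partial sums are $-S_n$ with the same $D_n$, and which satisfy the same hypotheses) gives $\limsup_n (-S_n)/D_n = \infty$ a.s., i.e. $\liminf_n S_n/D_n = -\infty$ a.s.

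\medskip

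I expect the main obstacle to be the justification that $\limsup_n S_n/D_n$ is a tail-measurable (hence a.s. constant) random variable; everything else is a short and standard deduction from the CLT. One must be a little careful that $D_n \to \infty$ is exactly what makes the contribution of any finite block of $\zeta_k$'s wash out in the limit, both in the numerator (a fixed additive shift) and in the denominator (only finitely many summands changed, with $D_n^2 \to \infty$). An alternative to the zero--one law, if one prefers to avoid it, is a direct Borel--Cantelli-type construction along a sparse subsequence: pick $n_1 < n_2 < \cdots$ so that the increments $S_{n_{k+1}} - S_{n_k}$ are ``almost independent'' of $S_{n_k}$ and each, suitably normalized, is asymptotically normal, then sum the probabilities of the events $\{S_{n_{k+1}} - S_{n_k} > 2A\, D_{n_{k+1}}\}$; but the tail-$\sigma$-algebra route is cleaner and is presumably what the cited argument in \cite{AR1} uses. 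Either way, the conclusion \eqref{cond:sqrt} follows.
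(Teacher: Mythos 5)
Your proof is correct. The paper itself does not prove this lemma: it only states that the result "was proven in \cite{AR1} by applying the Central Limit Theorem," so your self-contained argument --- CLT to get $\mathbb P[S_n/D_n>A]\to c_A>0$, reverse Fatou to get $\mathbb P[S_n/D_n>A \text{ i.o.}]\ge c_A$, then tail-measurability of $\limsup_n S_n/D_n$ (using that the $D_n$ are deterministic and $D_n\to\infty$, so a finite block of $\zeta_k$'s washes out) and the Kolmogorov zero--one law to force the a.s.\ constant value to be $+\infty$, with the symmetric argument applied to $-\zeta_n$ for the $\liminf$ --- is exactly the standard route such a citation points to. The only slip is cosmetic: the event $\{S_n/D_n>A \text{ i.o.}\}$ gives $\limsup_n S_n/D_n\ge A$, not $>A$; since $A>0$ is arbitrary this does not affect the conclusion $\ell=+\infty$.
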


\begin{remark}
\label{rem:Lind}
It was proved in \cite[p. 332--333]{Shiryaev96} that the Lindeberg condition \eqref{cond:Lind} holds if  $D^2_{n}\to 
\infty$ as 
$n\to \infty$ and $\zeta_n$ are uniformly bounded, i.e. $|\zeta_n|\le \bar M$ for some $\bar M\in \mathbb R_+$ and all $n\in \mathbb N$.
\end{remark}

Applying the above results to $\zeta_n=\sigma_{n-1}\xi_n$,  we conclude that when $\xi_n$ satisfies Assumption \ref{as:chibound},  
$\mathbb E\xi_n=0$,  $\mathbb E\xi_n^2=1$ and   $\sigma \notin {\bf \ell}_2$ we have
\begin{equation}
\label{rel:infty}
\limsup_{n\to \infty}\frac{\sum_{i=0}^n \sigma_i\xi_{i+1}}{\sqrt{\sum_{k=0}^{n}\sigma^2_k}}=\infty,\quad
 \liminf_{n\to \infty}\frac{\sum_{i=0}^n \sigma_i\xi_{i+1}}{\sqrt{\sum_{k=0}^{n}\sigma^2_k}}=-\infty, \quad\text{a.s.},
\end{equation}
which implies condition \eqref{eq:+infty}.

Assume now that   $\mathbb E\xi_n=\mu_n$ and   $\mu^-=(\mu_n^-) \in {\bf \ell}_2$. Applying the above results to 
$\zeta_n=\sigma_{n-1}(\xi_n-\mu_n)$, we arrive at 
\[
\limsup_{n\to \infty}\frac{\sum_{i=0}^n \sigma_i[\xi_{i+1}-\mu_{i+1}]}{\sqrt{\sum_{k=0}^{n}\sigma^2_k}}=\infty,
\]
or
\begin{equation}
\label{rel:inftymean}
\limsup_{n\to \infty}\left[\frac{\sum_{i=0}^n \sigma_i\xi_{i+1}}{\sqrt{\sum_{k=0}^{n}\sigma^2_k}}-
\frac{\sum_{i=0}^n \sigma_i\mu_{i+1}}{\sqrt{\sum_{k=0}^{n}\sigma^2_k}}\right]=\infty.
\end{equation}
Since $\mu_i=\mu_i^{-}+\mu_i^+\ge -[-\mu_i^{-}]$, we have
\[
\frac{\sum_{i=1}^n \sigma_i \mu_{i+1}}{\sqrt{\sum_{k=1}^{n}\sigma^2_k}} \ge -\frac{\sum_{i=1}^n \sigma_i[-\mu_{i+1}^-]}{\sqrt{\sum_{k=1}^{n}\sigma^2_k}}. 
\]
Applying the H\"older inequality we obtain, for each $n\in \mathbb N$,
\[
\frac{\sum_{i=0}^n \sigma_i [-\mu_{i+1}^-]}{\sqrt{\sum_{k=0}^{n}\sigma^2_k}}
\le \frac{\sqrt{\sum_{k=0}^{n}\sigma^2_k}\sqrt{\sum_{i=0}^n [\mu^-_{i+1}]^2}}{\sqrt{\sum_{k=0}^{n}\sigma^2_k}}=\sqrt{\sum_{i=1}^n 
[\mu^-_{i+1}]^2}\le \|\mu^-\|_{{\bf \ell}_2}.
\]
Then 
\begin{eqnarray*}
 \frac{\sum_{i=1}^n \sigma_i\xi_{i+1}}{\sqrt{\sum_{k=0}^{n}\sigma^2_k}}&= &\frac{\sum_{i=1}^n 
\sigma_i\xi_{i+1}}{\sqrt{\sum_{k=1}^{n}\sigma^2_k}}-
\frac{\sum_{i=1}^n \sigma_i\mu_{i+1}}{\sqrt{\sum_{k=1}^{n}\sigma^2_k}}+ 
\frac{\sum_{i=1}^n \sigma_i\mu_{i+1}}{\sqrt{\sum_{k=1}^{n}\sigma^2_k}} \\
& \ge & \frac{\sum_{i=1}^n \sigma_i\xi_{i+1}}{\sqrt{\sum_{k=1}^{n}\sigma^2_k}}-
\frac{\sum_{i=1}^n \sigma_i\mu_{i+1}}{\sqrt{\sum_{k=1}^{n}\sigma^2_k}}-\frac{\sum_{i=0}^n \sigma_i [-\mu_{i+1}^-]}{\sqrt{\sum_{k=0}^{n}\sigma^2_k}}\\
& \ge & \frac{\sum_{i=1}^n \sigma_i\xi_{i+1}}{\sqrt{\sum_{k=1}^{n}\sigma^2_k}}-
\frac{\sum_{i=1}^n \sigma_i\mu_{i+1}}{\sqrt{\sum_{k=1}^{n}\sigma^2_k}}-\|\mu^-\|_{{\bf \ell}_2}.
\end{eqnarray*}
Thus \eqref{rel:inftymean} implies \eqref{eq:+infty}.


\subsection{Proof of Lemma \ref{lem:symTN}}
\label{subsec:symTN}

Lemma \ref{lem:symTN} is a corollary of the following Lemma.
\begin{lemma}
\label{lem:symSn}
Let Assumption \ref{as:chibound} and condition \eqref{as:sym} hold and $\sigma\in {\bf \ell}_2$.  Let, for each $N\in \mathbb N$ and 
$n\ge N$, random variables  $T_N(n)$ and $T_N$ be defined by 
\eqref{def:TN(n)} and \eqref{def:TN} respectively. Then, for each $x\in \mathbb R$, $N\in \mathbb N$ and $n\ge N$,  we have
\begin{enumerate}
\item [(i)] $\mathbb P[T_N(n)=x]=0$, \, $\mathbb P[T_N=x]=0$;
\item [(ii)] a) $\mathbb P[T_N(n)\le x]=\mathbb P[T_N(n)\ge - x]$; \, \\ b) $\mathbb P[T_N\le x]=\mathbb P[T_N \ge - x]$.
\end{enumerate}
\end{lemma}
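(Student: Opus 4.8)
The plan is to treat (i) and (ii) separately, deriving atomlessness from a single convolution fact and the symmetry from the evenness of $\phi$ together with independence, and then to note that Lemma~\ref{lem:symTN} drops out of the case $x=0$ of (ii)b).

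For part (i) I would use the following elementary fact: if a random variable $X$ is absolutely continuous and $Y$ is independent of $X$ (with an arbitrary law), then $X+Y$ is absolutely continuous, since for any Borel set $A$ of Lebesgue measure zero, conditioning on $Y$ gives $\mathbb P[X+Y\in A]=\int \mathbb P[X\in A-y]\,dF_Y(y)=0$. Now fix $N$. Under Assumption~\ref{as:chibound} and \eqref{as:sym}, $\sigma_N\xi_{N+1}$ has density $\phi(\,\cdot\,/\sigma_N)/\sigma_N$, hence is absolutely continuous, and it is independent of $\sum_{i=N+1}^{n}\sigma_i\xi_{i+1}$ (finite case) and of $\sum_{i=N+1}^{\infty}\sigma_i\xi_{i+1}$ (infinite case — this tail converges a.s. by the Two-Series Theorem~\ref{thm:2ser}, since $\sigma\in{\bf \ell}_2$ and $\xi$ is bounded, so the series of variances converges). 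Applying the fact above with $X=\sigma_N\xi_{N+1}$ shows that both $T_N(n)$ and $T_N$ are absolutely continuous, so $\mathbb P[T_N(n)=x]=\mathbb P[T_N=x]=0$ for every $x\in\mathbb R$.

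For part (ii)a), condition \eqref{as:sym} makes each $\xi_{i+1}$ symmetric, i.e. $\xi_{i+1}\stackrel{d}{=}-\xi_{i+1}$, and since $\sigma_i>0$ also $\sigma_i\xi_{i+1}\stackrel{d}{=}-\sigma_i\xi_{i+1}$. By independence of the $\xi_i$, the vectors $(\sigma_N\xi_{N+1},\dots,\sigma_n\xi_{n+1})$ and $(-\sigma_N\xi_{N+1},\dots,-\sigma_n\xi_{n+1})$ have the same law; applying the continuous map $(y_N,\dots,y_n)\mapsto y_N+\dots+y_n$ gives $T_N(n)\stackrel{d}{=}-T_N(n)$, whence $\mathbb P[T_N(n)\le x]=\mathbb P[-T_N(n)\le x]=\mathbb P[T_N(n)\ge -x]$. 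For (ii)b) I would pass to the limit: $T_N(n)\to T_N$ a.s. by Theorem~\ref{thm:2ser}, so $T_N(n)\xrightarrow{d}T_N$, and by part (i) the distribution function of $T_N$ is continuous at every point; hence $\mathbb P[T_N(n)\le x]\to\mathbb P[T_N\le x]$ and, using that $T_N(n)$ is atomless, $\mathbb P[T_N(n)\ge -x]=1-\mathbb P[T_N(n)\le -x]\to 1-\mathbb P[T_N\le -x]=\mathbb P[T_N\ge -x]$, so letting $n\to\infty$ in (ii)a) yields (ii)b). (Alternatively one can run the reflection argument of (ii)a) directly on the whole sequence $(\sigma_i\xi_{i+1})_{i\ge N}$ in the product space, reflecting the entire i.i.d.\ sequence and composing with the a.s.-defined sum functional.)

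The only genuinely delicate point is the atomlessness of the infinite sum $T_N$ in part (i); once the "absolutely continuous plus independent stays absolutely continuous" observation is isolated, this and everything else is routine. Finally, Lemma~\ref{lem:symTN} follows by taking $x=0$ in (ii)b), which gives $\mathbb P[T_N\le 0]=\mathbb P[T_N\ge 0]$, together with $\mathbb P[T_N\le 0]+\mathbb P[T_N\ge 0]=1+\mathbb P[T_N=0]=1$ from part (i).
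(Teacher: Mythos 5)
Your proof is correct and follows the same overall skeleton as the paper's: atomlessness is obtained by splitting off one absolutely continuous summand that is independent of the rest, symmetry is proved for the finite sums, and (ii)b) follows by passing to the limit via Theorem~\ref{thm:2ser} and convergence in distribution, with Lemma~\ref{lem:symTN} recovered at $x=0$. The one place where you genuinely diverge is (ii)a): the paper verifies $\mathbb P[T_N(N+1)\le x]=\mathbb P[T_N(N+1)\ge -x]$ by writing out the convolution integrals with the even densities $f_i$ and then extends to general $n\ge N$ by induction, whereas you note that $\sigma_i\xi_{i+1}\stackrel{d}{=}-\sigma_i\xi_{i+1}$ for each $i$, that independence upgrades this to equality in law of the reflected random vector, and that applying the sum map gives $T_N(n)\stackrel{d}{=}-T_N(n)$ directly. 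Your route is shorter, needs no induction, and (as your parenthetical observes) could even treat $T_N$ itself in one stroke by reflecting the whole sequence; the paper's computation is merely a more hands-on verification of the same symmetry. You are also more careful than the paper at the limit step, explicitly using atomlessness (part (i)) to justify $\mathbb P[T_N(n)\le -x]\to\mathbb P[T_N\le -x]$ at every point and to pass between strict and non-strict inequalities, which the paper does without comment. One small point to make explicit: when invoking Theorem~\ref{thm:2ser} for the tail you mention only the variance series; you should add that the series of means vanishes because \eqref{as:sym} gives $\mathbb E\xi_i=0$ (as the paper records right after stating \eqref{as:sym}). With that remark, your argument is complete.
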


\begin{proof}
(i) The result stated in (i) will follow from the continuity of distributions of $T_N(n)$ and $T_N$. 
Denote for simplicity
\begin{equation}
\label{def:zi}
z_i:=\sigma_{i-1}\xi_i, \quad i\in \mathbb N.
\end{equation}
Then
\[
T_{N}=\sum_{i=N}^{\infty} z_i=\sum_{i=N+1}^{\infty} z_i+z_{N}=T_{N+1}+z_{N},
\]
and $z_N$ is independent of $T_{N+1}$.
Let $F_{N+1}$ be the probability distribution function for $T_{N+1}$.
Then, for $x\in \mathbb R$, 
\begin{equation}
\begin{split}
&\mathbb P[T_{N}=x]=\mathbb P[T_{N+1}+z_{N}=x]=\int_{-\infty}^\infty \mathbb P\left[T_{N+1}+z_{N}=x\biggl| T_{N+1}=y\right]dF_{N+1}(y)\\
&=\int_{-\infty}^\infty \mathbb P\left[z_{N}=x-y\biggl| T_{N+1}=y\right]dF_{N+1}(y)=\int_{-\infty}^\infty \mathbb P\left[z_{N}=x-y\right]dF_{N+1}(y)=0,
\end{split}
\end{equation}
since for the continuous random variable $z_{N}$  we have $\mathbb P\left[z_{N}=x-y\right]=0$ for each $x, y\in \mathbb N$.

The same approach is applied in the proof of $\mathbb P[T_N(n)=x]=0$.

\medskip

(ii), a). Prove first that, for each $N\in \mathbb N$,
\[
\mathbb P [T_N(N+1)\le x]=\mathbb P[T_N(N+1)\ge -x].
\]
Random variables $z_i$ and $z_j$, $i\neq j$, defined as in \eqref{def:zi},  are continuous and independent. 
 By Assumption \ref{as:chibound}  and condition \eqref{as:sym} each $\xi_i$ has  a density function $\phi$ such that $\phi(-x)=\phi(x)$. Since $\sigma_n>0$ for each $n\in \mathbf N$, we conclude that $z_i$ has  a density function $f_i$, defined by 
  \[
 f_i(x)=\frac 1{\sigma_{i-1}}\phi\left( \frac x{\sigma_{i-1} }\right), \quad x\in \mathbb R,
 \]
  and  $f_i(-x)=f_i(x)$.  
  
 If we denote by $f_{-i}$ a  density function for $-z_i$, then
  \[
  f_{-i}(x)=f_{i}(-x)= f_i(x), \quad \forall x\in \mathbb R.
   \]
Now, on the one hand, 
\begin{equation*}
\begin{split}
&\mathbb P [ T_N(N+1) \le x]=\mathbb P [z_{N}+z_{N+1}\le x]=\int_{-\infty}^\infty \mathbb P\left[z_{N}\le x-y\biggl| z_{N+1}=y\right]f_{N+1}(y)dy\\\\
&\int_{-\infty}^\infty \mathbb P\left[z_{N}\le x-y \right]f_{N+1}(y)dy=\int_{-\infty}^\infty \left[\int_{-\infty}^{x-y}f_{N}(t)dt\right]f_{N+1}(y)dy.
\end{split}
\end{equation*}


On the other hand,
\begin{equation*}
\begin{split}
\mathbb P [ T_N(N+1)&\ge -x]=\mathbb P [- T_N(N+1)\le x]=\mathbb P [-z_{N}-z_{N+1}\le x]\\\\&
=\int_{-\infty}^\infty \left[\int_{-\infty}^{x-y}f_{-N}(t)dt\right]f_{-(N+1)}(y)dy=\int_{-\infty}^\infty \left[\int_{-\infty}^{x-y}f_{N}(t)dt\right]f_{N+1}(y)dy\\\\&=\mathbb P [T_N(N+1)\le x].
\end{split}
\end{equation*}

\medskip

Presenting  $ T_N(n)= T_{N-1}(n)+z_n$  for $n\ge N+2$, and applying the mathematical induction we prove similarly that 
\begin{equation}
\label{eq:SnN}
\mathbb P \left[T_N(n)\le x \right]=\mathbb P \left[T_N(n)\ge - x \right].
\end{equation}

\medskip

(ii), b) {\it Convergence of distributions}. Let us prove that, for all $x\in \mathbb R$, 
\[
\mathbb P \left[T_N\le x \right]=\mathbb P \left[T_N\ge  -x \right].
\]
Theorem  \ref{thm:2ser} implies that  that $T_N(n)\to T_N$, a.s., so $T_N(n)$ converges to  $T_N$ in distribution and thus
\[
\lim_{n\to \infty} \mathbb P \left[ T_N(n)\le x \right]=\mathbb P \left[ T_N \le x\right].
\]
Similarly,
\[
\lim_{n\to \infty} \mathbb P \left[ T_N(n)\ge - x \right]=
\lim_{n\to \infty} \left\{1-\mathbb P \left[T_N(n)\le -x \right]\right\}=1-\mathbb P \left[T_N\le -x\right]=\mathbb P \left[ T_N \ge  -x\right].
\]
Proceeding to the limits in \eqref{eq:SnN} yields
\begin{equation}
\label{rel:1}
\mathbb P \left[T_N\le x \right]=\mathbb P \left[T_N\ge  -x \right].
\end{equation}
\end{proof}

To complete the proof of Lemma \ref{lem:symTN}  we take $x=0$ in \eqref{rel:1} which gives 
\[
\mathbb P \left[T_N\le 0 \right]=\mathbb P \left[T_N\ge 0 \right]=1-\mathbb P \left[T_N\le 0 \right]
\]
and  implies the necessary result:
\[
\mathbb P \left[T_N\le 0 \right]=\frac{1}{2}.
\]


\begin{thebibliography}{99}


\bibitem{ABR}
J. A. D. Appleby, G. Berkolaiko, and A. Rodkina. Non-exponential stability and decay rates in nonlinear stochastic difference equations with unbounded noise. Stochastics: An International Journal of Probability and Stochastic Processes, 81:2, (2009), 99-127.

\bibitem{AMR}

J. A. D. Appleby, X. Mao, and A. Rodkina. On stochastic stabilization of difference equations, Dynamics of
Continuous and Discrete System, 15:3, (2006), pp. 843-857.


\bibitem{AR1}
J. A. D. Appleby and A. Rodkina,  On the Oscillation of
Solutions of Stochastic Difference Equations with
State-Independent Perturbations, {\em Int. J.
Difference Equ.} {\bf 2} (2), 2007, 139--164.

\bibitem{Brav2011}
E. Braverman, 
Random perturbations of difference equations with Allee effect: switch of stability properties, {\em Proceedings of the 
Workshop Future Directions in Difference Equations}, 51-–60, Colecc. Congr., 69, Univ. Vigo, Serv. Publ., Vigo, 2011.

\bibitem{BF15}
E. Braverman and D. Franco, 
Stabilization with target oriented control for higher order difference equations, {\em Phys. Lett. A} 
{\bf 379} (2015), 
1102–-1109.

\bibitem{BL}
E. Braverman and E. Liz,
On stabilization of equilibria using predictive control with and without pulses,
{\em Comput. Math. Appl.}
{\bf 64} (2012), 2192--2201.

\bibitem{BRCAM12}
E. Braverman and A. Rodkina,
On difference equations with asymptotically stable 2-cycles perturbed by a decaying noise, ☆ ☆☆
{\em Comput. Math. Appl.} {\bf 64} (2012), 2224--2232.

\bibitem{BRJDEA13}
E. Braverman and A. Rodkina, Stabilization of two cycles of difference equations with stochastic perturbations. J. Difference 
Equ. Appl. 19 (2013), 
1192--1212.

\bibitem{CT}
Y.~S.~Chow and H.~Teicher, Iterated logarithm laws for weighted
averages. Z. Wahrscheinlichkeitstheorie und Verw. Gebiete {\bf
26}, (1973), 87-94.

\bibitem{IMT13}
A.\,F. Ivanov, M.\,A. Mammadov and S.\,I. Trofimchuk,  
Global stabilization in nonlinear discrete systems with time-delay, {\em J. 
Global Optim.} {\bf 56} (2013), 
251-263. 


\bibitem{KR09}
C. Kelly and A. Rodkina, 
Constrained Stability and Instability of Polynomial Difference Equations with State-Dependent Noise, {\em Discrete and Continuous Dynamical Systems, Series B}, {\bf 11}:4 (2009), 913-933.


\bibitem{LP14}
E. Liz and C. P\"{o}tzsche,  
PBC-based pulse stabilization of periodic orbits, {\em Phys. D} {\bf 272} (2014), 26-–38. 


\bibitem{Medina12}
R. Medina,  
Local stabilization of abstract discrete-time systems,
{\em  J. Difference Equ. Appl.} {\bf 18} (2012), 
1735-–1749.


\bibitem{Shai}
L. Shaikhet, Optimal Control of Stochastic Difference Volterra Equations. An Introduction. Studies in Systems, Decision and 
Control {\bf 17}, Springer, Cham, 2015. 


\bibitem{Shiryaev96}
A.\,N.  Shiryaev (1996): {\em Probability} (2nd edition), Springer, Berlin.
\end{thebibliography}
\end{document}